\documentclass[12pt, reqno]{amsart}
\usepackage{geometry}
\geometry{left=2cm}
\geometry{right=2cm}
\geometry{top=2cm}
\geometry{bottom=2cm}

\usepackage{amssymb,amsmath,amsthm}
\usepackage{mathtools}
\usepackage[english]{babel}
\usepackage{hyperref}
\usepackage{color}

\usepackage{tikz}
\usepackage{tikz-cd}
\usepackage{comment}
\usetikzlibrary{arrows}
\usetikzlibrary{calc,patterns}
\usetikzlibrary{decorations.markings}
\usetikzlibrary{patterns,decorations.pathreplacing}

%%%%  font change
\usepackage{mathptmx}
\usepackage{bm}

\newcommand{\rank}{\operatorname {rank}}

\newcommand{\mf}[1]{\mathfrak #1}
\newcommand{\rme}{\mathrm{e}}
\newcommand{\m}{\mathfrak{m}}
\newcommand{\frm}{\mathfrak{m}}
\newcommand{\frp}{\mathfrak{p}}
\newcommand{\bbZ}{\mathbb{Z}}

\newcommand{\bbR}{\mathbb{R}}
\newcommand{\bbP}{\mathbb{P}}
\newcommand{\type}{\mathrm{type}}
\newcommand{\emb}{\mathrm{emb}}

\newcommand{\sfM}{\mathsf{M}} %lattice_for_toric
\newcommand{\sfN}{\mathsf{N}} %lattice_for_toric

\DeclareMathOperator{\ehk}{e_{HK}}

\DeclareMathOperator{\fsig}{s}

\DeclareMathOperator{\Hom}{Hom}
\DeclareMathOperator{\Min}{Min}

\DeclareMathOperator{\Assh}{Assh}
\DeclareMathOperator{\Index}{index}

\DeclareMathOperator{\Cl}{Cl}

\DeclareMathOperator{\GL}{GL}
\renewcommand{\hat}{\widehat}

\newcommand{\PC}[1]{\mathcal{P}^{\vee} (\{ #1 \}) }

\newtheorem{thm}{Theorem}
\newtheorem{lemma}[thm]{Lemma}
\newtheorem{prop}[thm]{Proposition}
\newtheorem{cor}[thm]{Corollary}
\theoremstyle{definition}
\newtheorem{defn}[thm]{Definition}
\newtheorem{conj}[thm]{Conjecture}

\theoremstyle{remark}
\newtheorem{remark}[thm]{Remark}
\newtheorem{exam}[thm]{Example}
\newtheorem{quest}[thm]{Question}

\newtheorem{obs}[thm]{Observation} 

\numberwithin{thm}{section}
\numberwithin{equation}{section}

%\newcommand{\comment}[1]{{\color{red} #1}}

%\selectlanguage{english}

\begin{document}
\title{Lower bounds on Hilbert--Kunz multiplicities and maximal $F$-signatures}
%%%%%%%%%%%%%%%%%%%%%%%%%%%%%%%%%%%%%%%%%%%%%%%%%%%%%%%%%%%%%%%
%% Information for third author
%%%%%%% %%%%%%% %%%%%%% %%%%%%% %%%%%%% %%%%%%% %%%%%%%
%\author[Jack Jeffries \and Yusuke Nakajima \and Ilya Smirnov \and Kei-ichi Watanabe \and Ken-ichi Yoshida]{Jack Jeffries \and Yusuke Nakajima \and Ilya Smirnov \and Kei-ichi Watanabe \and Ken-ichi Yoshida}
\author[Jack Jeffries]{Jack Jeffries}
\address[Jack Jeffries]{Department of Mathematics, University of Nebraska-Lincoln, Lincoln, NE, 68588, USA}
\email{jack.jeffries@unl.edu}

\author[Yusuke Nakajima]{Yusuke Nakajima}
\address[Yusuke Nakajima]{Department of Mathematics, Kyoto Sangyo University, Motoyama, Kamigamo, Kita-Ku, Kyoto, Japan, 603-8555} 
\email{ynakaji@cc.kyoto-su.ac.jp}

\author[Ilya Smirnov]{Ilya Smirnov}
\address[Ilya Smirnov]{BCAM -- Basque Center for Applied Mathematics, Mazarredo 14, 48009 Bilbao, Basque Country -- Spain
\quad and \quad
IKERBASQUE, Basque Foundation for Science, Plaza Euskadi 5, 48009 Bilbao, Basque Country -- Spain}
\email{ismirnov@bcamath.org}

\author[Kei-ichi Watanabe]{Kei-ichi Watanabe}
\address[Kei-ichi Watanabe]{Department of Mathematics, College of Humanities and Sciences, Nihon University, 3-25-40 Sakurajosui, Setagaya-Ku, Tokyo 156-8550, Japan and 
Organization for the Strategic Coordination of Research and Intellectual Properties, Meiji University}
\email{watnbkei@gmail.com}

\author[Ken-ichi Yoshida]{Ken-ichi Yoshida}
\address[Ken-ichi Yoshida]{Department of Mathematics, College of Humanities and Sciences, Nihon University, 3-25-40 Sakurajosui, Setagaya-Ku, Tokyo 156-8550, Japan}
\email{yoshida@math.chs.nihon-u.ac.jp}

\begin{abstract} 
Hilbert--Kunz multiplicity and F-signature are numerical invariants of commutative rings in positive characteristic
that measure severity of singularities: for a regular ring both invariants are equal to one and the converse holds under mild assumptions. 
A natural question is for what singular rings these invariants are closest to one. 
For Hilbert--Kunz multiplicity this question was first considered by the last two authors and attracted significant attention.  
In this paper, we study this question, i.e., an upper bound, for F-signature 
and revisit lower bounds on Hilbert--Kunz multiplicity. 
\end{abstract}

%%%%%%%%%%%%%%%%%%%%%%%%%%%%%%%%%%%%%%%%%%%%%%%%%%%%%%%%%%
%\thanks{}
\maketitle
%%%%%%%%%%%%%%%%%%%%%%%%%%%%%%%%%%%%%%%%%%%%%%%%%%%%%%%%%%

%%% abstract %%%

%%% text start %%%
%%%%%%%%%%%%%%%%%%%%%%%%%%%%%%%%%%%%%%%%%%%%%%%%%%%%%%%%%%%
%%% Section 1
\section{Introduction}

\subsection{Background}
A ring of positive characteristic has a wealth of objects arising from the Frobenius endomorphism. 
The focus of this paper are two numerical invariants: Hilbert--Kunz multiplicity and F-signature. 
For simplicity, let us assume that $A$ is a local domain such that $A^{1/p}$ is a finitely generated $A$-module.
The Hilbert--Kunz multiplicity of $A$ (\cite{Kunz76, Mon83}) is defined as 
\[
\ehk(A)\coloneqq \lim_{e \to \infty} \dfrac{\mu_A(A^{1/p^e})}{\rank(A^{1/p^e})},
\] 
where $\mu_A$ denotes the minimal number of generators, 
and the F-signature of $A$ (\cite{HunLeu, Tuc12}) is 
\[
\fsig(A) \coloneqq \displaystyle{\lim_{e \to \infty}} \dfrac{\max\{n \mid A^{1/p^e} \cong A^{\oplus n} \oplus M\}}{\rank(A^{1/p^e})}, 
\]
where $M$ is a finitely generated $A$-module without free direct summands. 

A fundamental result of Kunz (\cite{Kunz69}) asserts that $A^{1/p^e}$ is free if and only if $A$ is regular.
It follows that $\ehk(A) \ge 1$ and $1 \ge \fsig(A) \ge 0$, and under a mild condition the value is $1$ if and only if $A$ is regular (\cite{Kunz69, WataYos00, HunLeu}). 
Furthermore, positivity of F-signature characterizes the class of strongly F-regular rings \cite{AberLeu}, 
a fundamental class of mild singularities that first appeared in the tight closure theory \cite{HochHun94}.
A related result of Blickle--Enescu \cite{BlicEnes04} shows that small Hilbert--Kunz multiplicity 
also forces the ring to be strongly F-regular.
\par
A natural question is how close can
the Hilbert--Kunz multiplicity of a singularity be to $1$? And a natural guess is that the simplest double point singularity $k[[x_1, \ldots, x_d]]/(x_1^2 + \cdots + x_d^2)$ should have the smallest Hilbert--Kunz multiplicity (see, Conjecture~\ref{LBconj} for details). 
By \cite{WataYos01a, WataYos05, AberEnes13} this is now a theorem in dimension at most $6$.
\par
In this paper, we extend this investigation by asking to find further bounds on Hilbert--Kunz multiplicity of mild singularities 
and considering the analogous question for F-signature.  
For instance, in dimension~$2$, most non-regular F-regular local rings are quotient singularities, 
in which case we have that $\fsig(A)=1/|G| \le \frac{1}{2}$, where $A=k[[x,y]]^{G}$ and $G$ is a finite subgroup of $\GL_{2}(k)$. 
It seems that a similar question has no answer even in  dimension $3$.  

\par \vspace{2mm} \par \noindent
% Question 
\begin{quest}\label{que_maxFsig}
Let $A$ be a strongly F-regular local domain of dimension $d \ge 3$ 
which is not regular.  
Then what is the upper bound on $\fsig(A)$?
\end{quest}

We give a partial answer to the question above, 
and pose a conjecture; see Conjecture \ref{FsigConj}. 
Let us explain  the organization of the paper. 

\subsection{Structure of the paper and main results}
In Section~\ref{sec_preliminaries}, we recall several definitions 
(Hilbert--Kunz multiplicity, F-regularity, FFRT, F-signature 
and so on) and pose two conjectures. 
In Section~\ref{section_lowerHK}, we give a lower bound on Hilbert--Kunz multiplicities. 
Namely, we prove the following theorem and its refinement in the $3$-dimensional case 
(see Theorem \ref{3dimOp}). 

\begin{thm}[see Theorem~\ref{Higher}]
Let $(A,\m)$ be a formally unmixed local ring 
of characteristic $p>0$. 
If ${d=\dim A \ge 3}$, then for every $\m$-primary ideal $I$ we have 
\[
\ehk(I) > \dfrac{\rme(I)+d}{d!}.
\] 
\end{thm}

\par  
In Section~\ref{section_uBounds}, we generalize an argument of De Stefani and the first author that $\fsig(A) \le \frac{1}{2}$ for non-Gorenstein 
Cohen-Macaulay local rings $A$ (see Proposition~\ref{UPforNonGor})
and characterize the case where equality holds. 

\begin{thm}[see Theorem~\ref{EqUPforNonGor}]
Let $A$ be a Cohen-Macaulay local domain with the canonical module $\omega_A$
which is not Gorenstein. 
Then $\fsig(A) \le \frac{1}{2}$, and the following conditions are 
equivalent$:$
\begin{enumerate}
 \item $\fsig(A) = \frac{1}{2}$. 
 \item $A^{1/p^e}$ is a finite direct sum of $A$ 
 and $\omega_A$ for every $e \ge 1$. 
\end{enumerate}
\end{thm}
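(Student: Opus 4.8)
The plan is to reduce condition $(2)$ to its case $e=1$ and then analyze the Krull--Schmidt decomposition of $F_*A$ by means of canonical duality. Write $\omega:=\omega_A$; since $A$ is a Cohen--Macaulay domain that is not Gorenstein, $\omega$ is an indecomposable maximal Cohen--Macaulay $A$-module of rank $1$ with $\omega\not\cong A$, so $A$ and $\omega$ are two non-isomorphic indecomposable modules whose multiplicities we shall track. Put $r_e:=\rank_A(F^e_*A)$ and fix $F^e_*A\cong A^{\oplus a_e}\oplus\omega^{\oplus c_e}\oplus N_e$, with $N_e$ having no summand isomorphic to $A$ or to $\omega$, so that $\fsig(A)=\lim_e a_e/r_e$. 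I would use the canonical dual $(-)^\vee:=\Hom_A(-,\omega)$, recalling that it is exact and involutive on maximal Cohen--Macaulay modules, interchanges $A$ and $\omega$, carries a module with no $A$- or $\omega$-summand to another such, and — crucially — satisfies $(F^e_*A)^\vee\cong F^e_*\omega$ (compatibility of Frobenius pushforward with the canonical module). Dualizing the decomposition gives $F^e_*\omega\cong\omega^{\oplus a_e}\oplus A^{\oplus c_e}\oplus N_e^\vee$, so $c_e$ is at once the multiplicity of $\omega$ in $F^e_*A$ and the free rank of $F^e_*\omega$. Since $F^{e+1}_*A=F_*(F^e_*A)$ and $F_*$ is additive, an easy induction using $F_*\omega\cong(F_*A)^\vee$ shows that $(2)$ holds if and only if $N_1=0$; thus the theorem reduces to the equivalence $\fsig(A)=\tfrac12\iff N_1=0$. (If $A$ is not complete I would first pass to $\hat A$, which changes none of these quantities.)

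For the easier direction $N_1=0\Rightarrow\fsig(A)=\tfrac12$: when $N_1=0$ the above induction gives $F^e_*A\cong A^{\oplus a_e}\oplus\omega^{\oplus c_e}$, hence $a_e+c_e=r_e$ for all $e$. Expanding $F^{e+1}_*A=(F_*A)^{\oplus a_e}\oplus(F_*\omega)^{\oplus c_e}$ and comparing multiplicities yields $a_{e+1}-c_{e+1}=(a_1-c_1)(a_e-c_e)$, so $u_e:=(a_e-c_e)/r_e$ satisfies $u_e=u_1^{e}$ with $|u_1|\le1$. Here $u_1=1$ would make $F_*A$ free, so $A$ regular (Kunz's theorem, \cite{Kunz69}) and hence Gorenstein, a contradiction; $u_1=-1$ would make $F_*A\cong\omega^{\oplus r_1}$ and hence $F^2_*A\cong(F_*\omega)^{\oplus r_1}\cong A^{\oplus r_1^2}$ free (using $F_*\omega\cong(F_*A)^\vee\cong A^{\oplus r_1}$), again forcing $A$ regular. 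So $|u_1|<1$, $u_e\to0$, and $\fsig(A)=\lim_e\tfrac12(1+u_e)=\tfrac12$.

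The substantive direction, $\fsig(A)=\tfrac12\Rightarrow N_1=0$, I would prove by contradiction. Suppose $\rho:=\rank_A(N_1)\ge1$. Writing $F^{e+1}_*A=(F_*A)^{\oplus a_e}\oplus(F_*\omega)^{\oplus c_e}\oplus F_*N_e$ and substituting $F_*A\cong A^{\oplus a_1}\oplus\omega^{\oplus c_1}\oplus N_1$ and $F_*\omega\cong\omega^{\oplus a_1}\oplus A^{\oplus c_1}\oplus N_1^\vee$, and observing that neither $N_1$ nor $N_1^\vee$ contributes an $A$- or an $\omega$-summand, Krull--Schmidt yields the key inequality
\[
\rank_A(N_{e+1})\ \ge\ \rank_A\!\big(N_1^{\oplus a_e}\oplus(N_1^\vee)^{\oplus c_e}\big)\ =\ (a_e+c_e)\,\rho .
\]
On the other hand $\rank\omega=1$ gives $a_e+c_e\le r_e$, and — the step where the hypothesis really enters — equality in Proposition~\ref{UPforNonGor} forces the $\omega$-summands of $F^e_*A$ to occur with density exactly $\fsig(A)$, i.e. $\lim_e c_e/r_e=\fsig(A)=\tfrac12$; hence $\lim_e(a_e+c_e)/r_e=1$, so $a_e+c_e\ge\tfrac12 r_e$ for $e\gg0$ while $\rank_A(N_{e+1})/r_{e+1}\to0$. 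For such $e$ then
\[
\frac{\rho}{2}\ \le\ \frac{(a_e+c_e)\,\rho}{r_e}\ \le\ \frac{\rank_A(N_{e+1})}{r_e}\ =\ \frac{r_{e+1}}{r_e}\cdot\frac{\rank_A(N_{e+1})}{r_{e+1}},
\]
whose right-hand side tends to $0$ as $e\to\infty$ (since $r_{e+1}/r_e$ is constant), forcing $\rho=0$, the desired contradiction. Hence $N_1=0$.

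The main obstacle is the step singled out above: showing that the \emph{exact} value $\fsig(A)=\tfrac12$ compels the canonical summands of $F^e_*A$ to have density $\fsig(A)$ (equivalently $\lim_e\rank_A(N_e)/r_e=0$), rather than merely $\fsig(A)\le\tfrac12$. This is the sharpness content behind Proposition~\ref{UPforNonGor}; I would obtain it either by extracting it from that proposition's proof or by establishing directly that the $F$-signature of the $A$-module $\omega$ equals $\fsig(A)\cdot\rank\omega=\fsig(A)$. The remaining ingredients — the duality $(F^e_*A)^\vee\cong F^e_*\omega$, the Krull--Schmidt bookkeeping, and Kunz's theorem — are routine.
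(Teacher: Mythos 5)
Your proof is correct, and while it uses the same fundamental ingredient as the paper — Sannai's observation that the density of $\omega$-summands $\lim_e b_e/\operatorname{rank}F^e_*A$ equals $\fsig(A)$, proved via the duality $\Hom_A(F^e_*A,\omega)\cong F^e_*\omega$ — the surrounding mechanics are genuinely different and somewhat more self-contained. You reduce the whole statement to a single decomposition ($N_1=0$) and then propagate it forward by the tautology $F^{e+1}_*A=F_*(F^e_*A)$, whereas the paper works with an arbitrary exponent $e'$ and invokes an external result (Polstra--Smirnov, Lemma~3.3) to guarantee positive asymptotic density of any indecomposable summand. Your recursion $\operatorname{rank}N_{e+1}\ge(a_e+c_e)\,\operatorname{rank}N_1$ delivers the same contradiction directly. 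For the direction $(2)\Rightarrow(1)$, the paper simply asserts that equality holds throughout the proof of Proposition~\ref{UPforNonGor} (implicitly using Sannai again), whereas you give a self-contained argument via $u_e=u_1^e$ and rule out $|u_1|=1$ by Kunz's theorem, thereby avoiding Sannai entirely for that implication. One small imprecision: you write that ``equality in Proposition~\ref{UPforNonGor} forces the $\omega$-summands to occur with density exactly $\fsig(A)$,'' but in fact the density $\lim_e b_e/\operatorname{rank}F^e_*A$ equals $\fsig(A)$ unconditionally (this is precisely Sannai's result as cited in the paper's proof of Proposition~\ref{UPforNonGor}); your alternative suggestion — that the generalized $F$-signature of $\omega$ equals $\fsig(A)\cdot\operatorname{rank}\omega$ — is the cleaner way to say it and is the fact you actually need.
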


\par 
When this is the case, $\ehk(A) = \frac{\type(A)+1}{2}$. 
Moreover, if, in addition, either $A$ is $\mathbb{Q}$-Gorenstein or 
a toric singularity, then it is isomorphic to the Veronese subring 
$k[[x_1,x_2,\ldots,x_d]]^{(2)}$, where $k[[x_1,\ldots,x_d]]^{(n)}=k[[(x_1,\ldots, x_d)^n]]$ 
(see Theorem~\ref{Qgor} and \ref{ToricMain}). 

\par \vspace{2mm}
The F-signature of a Gorenstein ring may exceed $\frac{1}{2}$. 
We explore an upper bound on F-signature for  Gorenstein, non-regular local rings of dimension three. 

\begin{thm}[see, Theorem~\ref{Gor-MaxF-sig}]
Let $(A,\m,k)$ be a $3$-dimensional Gorenstein strongly F-regular local ring 
with $\rme(A) \ge 3$. 
Then $\fsig(A) \le \frac{\rme(A)}{24}$. 
\end{thm}

We also provide a classification of (pointed, normal, affine) toric rings with F-signature greater than one half.

\begin{thm}[see Theorem~\ref{maximaltoric}] The toric rings with F-signature greater than $\frac{1}{2}$ are, up to isomorphism, as follows:
\begin{itemize}
\item For a polynomial ring $A$, we have $s(A)=1$.
\item For the coordinate ring $A$ of the Segre product $\bbP^1 \# \bbP^1$, we have $s(A)=2/3$.
\item For the coordinate ring $A$ of the Segre product $\bbP^2 \# \bbP^2$, we have $s(A)=11/20$.
\end{itemize}
\end{thm}

\begin{comment}
We are able to determine the maximal F-signature of $3$-dimensional toric singularity.

\begin{thm}[= \ref{3dim_toric_thm}]
Let $A$ be a $3$-dimensional non-regular Gorenstein toric ring. 
The following conditions are equivalent; 
\begin{enumerate}
\item $\fsig(A)>\frac{1}{2}$, 
\item $A$ is isomorphic to $k[x,y,z,w]/(xy-zw)$. 
\end{enumerate}
When this is the case, $\fsig(A)=\frac{2}{3}$. 
\end{thm} 
\end{comment}

%%%%%%%%%%%%%%%%%%%%%%%%%%%%%%%%%%%%%%%%%%%%%%%%%%%%%%%%%%%
%%% Section 2
\section{Preliminaries}\label{sec_preliminaries}

Let $(A,\m)$ be a local ring of characteristic $p>0$ and let $F^e \colon A \to A$ denote the $e^{th}$ iterated Frobenius map of $A$. 
For an $A$-module $M$, the Frobenius push-forward of $M$, $F^{e}_{*}M=\{F^{e}_{*}m \mid m \in M\}$, 
is defined as follows: it agrees with $M$ as an abelian group and $A$ acts  
by $a \cdot F^{e}_{*}m =F^{e}_{*}(a^{p^e}m)$ for any $a \in A$ and $m \in M$. 
If $A$ is reduced, $F^{e}_{*}A$ is identified with $A^{1/p^e}$ which consists of $p^e$-th roots of $A$. 
The ring $A$ is called \textit{F-finite} if $F^{e}_{*}A$ is 
a finitely generated $A$-module for every (some) $e \ge 1$. 

We now recall a more general definition of Hilbert--Kunz multiplicity. 
%%%
\begin{defn} \label{multiplicity}
Let  $\ell_A(W)$ denote the length of a finitely generated $A$-module $W$.
For an $\m$-primary ideal $I \subset A$ we denote $I^{[q]}=(a^q  \mid a \in I)A$ for each $q=p^e$.
If  $M$ is a finitely generated $A$-module,  
\[
\rme(I, M) \coloneqq \lim_{n \to \infty}\, \dfrac{d!}{n^d} \ell_A(M/I^{n+1}M)
\quad 
\text{(resp.} \;
\ehk(I, M)\coloneqq \lim_{q \to \infty} \dfrac{\ell_A(M/I^{[q]}M)}{q^d} \;\text{)}
\]
is called the \textit{multiplicity} (resp. the 
\textit{Hilbert--Kunz multiplicity}) of $M$ with respect to $I$. 
For brevity, we denote $\rme(I)=\rme(I, A)$ (resp. $\ehk(I)=\ehk(I,A)$) and call it the \textit{multiplicity} 
(resp.\,the \textit{Hilbert--Kunz multiplicity}) of $I$. 
We also denote, $\rme(\m,M)=\rme(M)$ and 
$\ehk(\m, M)=\ehk(M)$.  
\end{defn}

\par \vspace{2mm}
Recall the fundamental properties of Hilbert--Kunz multiplicities; see e.g. \cite{WataYos00}. 

%%% 
\begin{prop}[\textrm{\cite[(2.3),(2.4),(2.5)]{WataYos00}, \cite{Han03}}] 
\label{Fund-HK}
Let $I \subset A$ be an $\m$-primary ideal.  
\begin{enumerate}
\item The following inequalities hold true$:$
\[
\dfrac{\rme(I)}{d!} \le \ehk(I) \le \rme(I).  
\]
If, in addition, $d \ge 3$, then $\frac{\rme(I)}{d!} < \ehk(I)$. 
\item If $I$ is a parameter ideal, then $\ehk(I)=\rme(I)$. 
\item Let $\Assh(A)$ denote the set of all associated 
prime ideals $P$ with $\dim A/P=\dim A$.  Then
\[
\ehk(I,M) = \sum_{P \in \Assh(A)} \ehk(I, A/P) \cdot \ell_{A_P}(M_P). 
\]
\end{enumerate}
\end{prop}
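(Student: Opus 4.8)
The plan is to establish the three items in the order: the lower bound in (1), then (2), then the upper bound in (1) (which relies on (2)), and finally (3).

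\emph{Lower bound in (1) and statement (2).} For $q=p^e$ the Frobenius identity $(a+b)^q=a^q+b^q$ shows that $I^{[q]}=(a_1^q,\dots,a_n^q)$ whenever $I=(a_1,\dots,a_n)$, and since each $a_j^q\in I^q$ we get $I^{[q]}\subseteq I^q$, hence $\ell_A(A/I^{[q]})\ge\ell_A(A/I^q)$. Dividing by $q^d$ and using $\ell_A(A/I^q)=\frac{\rme(I)}{d!}q^d+O(q^{d-1})$ gives $\ehk(I)\ge\frac{\rme(I)}{d!}$; the strict inequality for $d\ge3$ is \cite{Han03} (and is sharpened for Cohen--Macaulay $A$ by Theorem~\ref{Higher}), which I would simply quote. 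For (2): if $J=(x_1,\dots,x_d)$ is a parameter ideal then $J^{[q]}=(x_1^q,\dots,x_d^q)$ is again generated by a system of parameters, and Lech's classical limit formula $\lim_{n_1,\dots,n_d\to\infty}\ell_A\big(A/(x_1^{n_1},\dots,x_d^{n_d})\big)/(n_1\cdots n_d)=\rme(J)$, specialized to $n_1=\cdots=n_d=q$, yields $\ehk(J)=\rme(J)$.

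\emph{Upper bound in (1).} I would first reduce to the case of an infinite residue field: passing from $A$ to the faithfully flat local extension $A(X)=A[X]_{\m A[X]}$ changes neither $\rme(I)$ nor $\ehk(I)$, since it preserves the relevant colengths. Then $I$ has a minimal reduction $J\subseteq I$ which is a parameter ideal with $\rme(J)=\rme(I)$. From $J^{[q]}\subseteq I^{[q]}$ we get $\ell_A(A/I^{[q]})\le\ell_A(A/J^{[q]})$, so $\ehk(I)\le\ehk(J)$, and by (2) this equals $\rme(J)=\rme(I)$.

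\emph{Statement (3).} Here I would use two ingredients. The first is an elementary dimension bound: for a finitely generated $A$-module $N$ one has $\ell_A(N/I^{[q]}N)=O(q^{\dim N})$ — choosing $c$ with $\m^c\subseteq I$ and using $(\m^c)^{[q]}=(\m^{[q]})^c\supseteq\m^{c'q}$ for a suitable constant $c'$, the module $N/I^{[q]}N$ is a quotient of $N/\m^{c'q}N$, whose length is $O(q^{\dim N})$ by Hilbert--Samuel theory; in particular $\ehk(I,N)=0$ once $\dim N<d$. The second is the additivity of $\ehk(I,-)$ along short exact sequences, equivalently Monsky's comparison that for a prime filtration $0=M_0\subset M_1\subset\cdots\subset M_n=M$ with $M_i/M_{i-1}\cong A/P_i$,
\[
\ell_A(M/I^{[q]}M)=\sum_{i=1}^n\ell_A\big((A/P_i)/I^{[q]}(A/P_i)\big)+O(q^{d-1});
\]
this is \cite{Mon83}, and I expect it to be the main obstacle. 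The naive comparison through the right-exact sequence $M'/I^{[q]}M'\to M/I^{[q]}M\to M''/I^{[q]}M''\to0$ leaves an error term $\ell_A\big((M'\cap I^{[q]}M)/I^{[q]}M'\big)$ — a $\mathrm{Tor}_1^A(A/I^{[q]},M'')$/Artin--Rees defect — which the straightforward Artin--Rees estimate controls by $O(q^{d-1})$ only when $\dim M'<d$, so the general case requires Monsky's induction on $\dim A$. Granting the comparison, divide by $q^d$: by the dimension bound only the $P_i$ with $\dim A/P_i=d$ contribute, and these lie in $\Assh(A)$. Finally, fixing $P\in\Assh(A)$ and localizing the filtration at $P$: since $P$ is a minimal prime, $(A/P_i)_P\ne0$ exactly when $P_i=P$, in which case $(A/P_i)_P\cong\kappa(P)$ has length $1$ over $A_P$, so $\#\{i:P_i=P\}=\ell_{A_P}(M_P)$. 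Combining everything gives $\ehk(I,M)=\sum_{P\in\Assh(A)}\ehk(I,A/P)\,\ell_{A_P}(M_P)$.
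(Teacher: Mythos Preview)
The paper does not give its own proof of this proposition: it is stated as a recall of fundamental facts, with citations to \cite{WataYos00} and \cite{Han03}, and no argument is supplied. So there is no ``paper's proof'' to compare against; your sketch is being measured against the standard literature arguments, and on that score it is essentially correct and follows the expected lines (the $I^{[q]}\subseteq I^q$ containment for the lower bound, Lech's limit formula for parameter ideals, reduction to an infinite residue field plus a minimal reduction for the upper bound, and Monsky's prime-filtration additivity for the associativity formula). Your honest flagging of the two points you would quote --- Hanes for the strict inequality when $d\ge 3$, and Monsky for the $O(q^{d-1})$ control of the Tor defect --- matches exactly what the paper's citations are doing.

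One small technical remark on your dimension bound in (3): the equality $(\m^c)^{[q]}=(\m^{[q]})^c$ does hold (Frobenius is a ring map, so the generators match), and then $\m^{[q]}\supseteq \m^{n(q-1)+1}$ with $n=\mu_A(\m)$ gives $(\m^{[q]})^c\supseteq \m^{c(n(q-1)+1)}$, so your $c'$ can be taken to be roughly $cn$. This is fine, but a slightly cleaner route is to pick a system of parameters $x_1,\dots,x_d\in I$ and bound $\ell_A(N/I^{[q]}N)\le \ell_A\big(N/(x_1^q,\dots,x_d^q)N\big)$ directly, which avoids juggling the two ideals $\m$ and $I$.
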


\subsection{Minimal value of Hilbert--Kunz multiplicity}

Now we want to discuss the conjectural lower bound on Hilbert--Kunz multiplicities of singularities. 
In order to state it, we recall the definition of type $(A_1)$ simple singularity. 

%%% 
\begin{defn} \label{Apd}
Let $p$ be a prime number, $k$ be an algebraically closed field of characteristic $p$, and $d$ a positive integer. 
Then we define $A_{p,d}$ as follows:
\[
A_{p,d} \coloneqq 
\left\{
\begin{array}{lc}
k[[x_0,x_1,\ldots,x_d]]/(x_0x_1+x_2x_3+\cdots + x_{d-1}x_d) & \text{(when $d=2m-1$, $m \ge 1$)}; \\[2mm]
k[[x_0,x_1,\ldots,x_d]]/(x_0^2+x_1x_2+x_3x_4+\cdots + x_{d-1}x_d) & \text{(when $d=2m$, $m \ge 1$)}.
\end{array}
\right. 
\]
\end{defn}

For $p > 2$ the equation takes a more familiar form $A_{p, d} \cong k[[x_0,x_1,\ldots,x_d]]/(x_0^2+x_1^2+\cdots + x_d^2)$.
Han and Monsky (\cite{HanMon93}) gave an algorithm to compute $\ehk (A_{p,d})$ given $p$ and $d$. However, 
an explicit formula for $\ehk (A_{p,d})$ as a function of $p$ is only known for small values of $d$ (see \cite{Yoshida} for examples).  Gessel and Monsky (\cite{GeMo10}) showed that 
$\lim_{p \to \infty} \ehk(A_{p,d}) = 1+ c_d$ where 
\[
\sec x + \tan x = 1 + \sum_{i=1}^{\infty} c_d\, x^d \quad 
\bigg(|x| < \dfrac{\pi}{2}\bigg).
\]
The first several values of $c_d$ are recorded in Table~\ref{Table of values} below.

%%% 
\begin{conj}[\textrm{cf. \cite[Conjecture 4.2]{WataYos05}}] \label{LBconj}
Let $(A,\m, k)$ be an F-finite, formally unmixed, non-regular local ring of dimension $d \geq 1$.
Then 
\begin{enumerate}
\item  $\ehk(A) \ge \ehk(A_{p,d}) \ge 1+ c_d$, where $c_d$ is defined above. 
\item  Suppose that $k=\overline{k}$. If $\ehk(A)=\ehk(A_{p,d})$, then $\widehat{A} \cong A_{p,d}$. 
\end{enumerate}
\end{conj}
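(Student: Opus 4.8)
The plan is to run the strategy that settles the conjecture in dimensions $d\le 6$ (Watanabe--Yoshida, Aberbach--Enescu) and to pinpoint where it fails in higher dimension. Since $\ehk(A)=\ehk(\hat A)$ and $\hat A$ is again formally unmixed, non-regular, and of the same dimension, I may assume $A$ complete. If $A$ is not a domain, Proposition~\ref{Fund-HK}(3) gives $\ehk(A)=\sum_{P\in\Assh(A)}\ehk(A/P)\,\ell_{A_P}(A_P)\ge 2$, whereas $\ehk(A_{p,d})<2$ for $d\ge 2$, so in dimension $\ge 2$ the non-domain case is done; in dimension $1$ one has instead $\ehk(A)=\rme(A)\ge 2=\ehk(A_{p,1})$ for every non-regular $A$, and part (2) in dimension $1$ is genuinely exceptional (e.g. $k[[t^2,t^3]]$ also realizes $\ehk=2$) and should be read with $d\ge 2$. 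Thus it remains to treat $A$ a complete local domain, non-regular, of dimension $d\ge 2$.

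Now split on the Hilbert--Samuel multiplicity. If $\rme(A)$ is large, Theorem~\ref{Higher} already gives $\ehk(A)>(\rme(A)+d)/d!\ge 1+c_d$ as soon as $\rme(A)\ge d!(1+c_d)-d$ (in dimension $2$ the weaker bound $\ehk(A)\ge\rme(A)/2$ suffices once $\rme(A)\ge 3$). If $\rme(A)=2$, the completion of a local domain of multiplicity $2$ is a hypersurface $A\cong k[[x_0,\dots,x_d]]/(f)$ with $f$ of order $2$; writing $f=q+g$ with $q$ the leading quadratic form of rank $r$ and $g\in\m^3$, the case $r=d+1$ forces $A\cong A_{p,d}$ by the formal Morse lemma (and its characteristic-$2$ version), while for $r\le d$ the one-parameter family $f_t=q+t(x_r^2+\cdots+x_d^2)+g$ has full-rank leading form for $t\ne 0$, hence general fibre $\cong A_{p,d}$ and special fibre $A$; either a direct Han--Monsky comparison of hypersurfaces (as in Watanabe--Yoshida) or upper semicontinuity of Hilbert--Kunz multiplicity in flat families then yields $\ehk(A)\ge\ehk(A_{p,d})$, with strict inequality when $r\le d$. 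Finally, the bound $\ehk(A_{p,d})\ge 1+c_d$ is a separate analytic input: $\ehk(A_{p,d})$ is computed by the Han--Monsky algorithm for $p>\max\{2,d\}$, is nonincreasing in $p$ and in $d$ (the latter mirroring that $c_d$ is decreasing), and tends to $1+c_d$ by Gessel--Monsky, with small primes checked directly.

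The real obstacle is the intermediate range $3\le\rme(A)<d!(1+c_d)-d$ in dimension $d\ge 3$: there Theorem~\ref{Higher} is too weak, and no known lower bound on $\ehk(A)$ in terms of $\rme(A)$ alone forces $\ehk(A)\ge 1+c_d$. To close the gap one wants a structure theorem in the spirit of Aberbach--Enescu --- that $\ehk(A)<1+c_d$ already forces $\rme(A)=2$, reducing everything to the hypersurface case above --- via either sharper estimates for $\ell_A(A/\m^{[q]})$ against a minimal reduction of $\m$, or a specialization argument degenerating $A$ to a ring of multiplicity $\le 2$ while controlling $\ehk$; both require genuinely new input, which is why the conjecture remains open for $d\ge 7$. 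A further delicate point, needed for part (2), is upgrading the degeneration estimate in the multiplicity-$2$ case to a strict inequality, which needs lower-semicontinuity rather than upper-semicontinuity information about $\ehk$.
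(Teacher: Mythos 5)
The statement you were asked to ``prove'' is Conjecture~\ref{LBconj}, not a theorem: the paper does not prove it, but rather records it, surveys its history, and lists the known cases in Theorem~\ref{KnownResults}. You correctly recognized this and wrote an honest account of the known strategy together with a precise identification of where it breaks down ($3\le\rme(A)<d!(1+c_d)-d$ in dimension $d\ge 7$), which matches the state of the art as reflected in the paper. Your observation that part~(2) must exclude $d=1$ (e.g.\ $k[[t^2,t^3]]$ has $\ehk=2=\ehk(A_{p,1})$ but is a domain while $A_{p,1}$ is not) is a genuine and correct caveat.

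A few small points worth flagging. First, your reduction of the multiplicity-$2$ case to a hypersurface relies implicitly on $A$ being Cohen--Macaulay; this does hold here (an unmixed local ring of multiplicity $2$ is Cohen--Macaulay), but it should be said. Second, the formal ``Morse lemma'' and the rank-deficient deformation must be handled with care in characteristic $2$, where the paper gives a different normal form for $A_{p,d}$; you gesture at this but the char-$2$ case of your degeneration argument deserves its own sentence. Third, and most importantly: you assert that $\ehk(A_{p,d})$ is nonincreasing in $p$ as if it were known, but per Theorem~\ref{KnownResults}(5) this is itself a conjecture of Yoshida, verified only in low dimensions. So the inequality $\ehk(A_{p,d})\ge 1+c_d$, which you describe as ``a separate analytic input,'' is not actually available for general $d$; Gessel--Monsky gives only the limit as $p\to\infty$. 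This is an additional open ingredient beyond the volume-estimate gap you correctly identify. Finally, the upper-semicontinuity of $\ehk$ along the degeneration you use for the multiplicity-$2$ case is the mechanism behind the Enescu--Shimomoto complete-intersection result cited in the paper, so invoking it there is appropriate; but note that for part~(2) one needs to show that strict rank deficiency forces a strict inequality, and a bare semicontinuity statement does not give this without further work.

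In short: you did not (and could not) prove the conjecture, but your reconstruction of the strategy and of the obstruction is essentially accurate modulo the caveats above.
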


\par  \vspace{3mm}
Let us summarize the cases where Conjecture~\ref{LBconj} is known. 

%%% 
\begin{thm} \label{KnownResults}
Let $A$ be a formally unmixed, non-regular local ring and $p$ be an odd 
prime number. 
\begin{enumerate}
\setlength{\itemsep}{5pt}
\item If $d \leq 3$ then Conjecture~\ref{LBconj} holds and $\ehk(A_{p, d}) = 1 + c_d$
\emph{(\cite[Theorem 3.1]{WataYos01a}, \cite[Theorem 3.1]{WataYos05})}. 
In fact, these results also show that $\ehk(A) \ge \ehk(A_{p,d}) = 1+ c_d$  for $p = 2$. 
\item If $d=4$, then Conjecture~\ref{LBconj} holds \emph{(\cite[Theorem 4.3]{WataYos05})} but $\ehk(A_{p,4})=\frac{29p^2+15}{24p^2+12} > \frac{29}{24}$
now depends on $p$ \emph{(\cite{GeMo10})}. 
\item If $d = 5,6$ then $\ehk(A) \ge \ehk(A_{p,d}) \ge 1 + c_d$
\emph{(\cite[Theorem 5.2]{AberEnes13})}.  
\item If $A$ is a complete intersection local ring, then 
$\ehk(A) \ge \ehk(A_{p,d})$ $($see \cite[Theorem 4.6]{EnesShim05}$)$. 
\item The inequality $\ehk(A_{p,d}) \ge 1+ c_d$ in full generality appears in the recent preprint of Trivedi \cite{Trivedi}.
\item Yoshida (\cite{Yoshida}) conjectures that $\ehk(A_{p,d})$ is a decreasing function in $p$ for a fixed $d$.  For $p$ sufficiently large (depending on $d$)
Yoshida's conjecture is also asserted by Trivedi in \cite{Trivedi}.
\end{enumerate}
\end{thm}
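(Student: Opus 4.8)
The statement assembles results from \cite{WataYos01a,WataYos05,AberEnes13,EnesShim05,GeMo10,Yoshida}; the plan is to indicate, item by item, which structural reduction and which analytic input does the work, rather than to reprove the cited facts.

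The common reduction for (1)--(4) is to pass to $\widehat A$ (which changes neither $\ehk$ nor the hypotheses), then to split $\ehk(A)=\sum_{P\in\Assh(A)}\ehk(A/P)\,\ell_{A_P}(A_P)$ by Proposition~\ref{Fund-HK}(3): since each $\ehk(A/P)\ge1$, the case that $A$ is not a domain is immediate ($\ehk(A)\ge2\ge\ehk(A_{p,d})$ in the relevant range), so one is reduced to a genuinely singular complete local domain, and after normalizing and passing to the associated graded ring (or an $\mathbb N$-graded flat degeneration) to a graded normal domain, where the Hilbert--Kunz function, the multiplicity $\rme(A)$ and a resolution of singularities are all available.

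I would then argue dimension by dimension. For $d=1$, $\ehk(A)=\rme(A)$, a non-regular unmixed curve has $\rme(A)\ge2=\ehk(A_{p,1})$, and $\rme(A)=2$ forces $\widehat A\cong k[[x_0,x_1]]/(x_0x_1)=A_{p,1}$ by the classification of double points. For $d=2$ (\cite{WataYos01a}) the engine is a sharp lower bound, of the shape $\ehk(A)\ge\tfrac12(\rme(A)+1)$, for a two-dimensional normal local ring, obtained by expressing $\ehk$ of a $2$-dimensional normal graded ring through intersection numbers on a resolution; this already gives $\ehk(A)\ge\tfrac32=\ehk(A_{p,2})$ once $\rme(A)\ge2$, and a refinement using the second coefficient of the Hilbert--Kunz function isolates the quadric cone $A_{p,2}=k[[x_0,x_1,x_2]]/(x_0^2+x_1x_2)$ as the unique minimizer, with no obstruction at $p=2$. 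For $d=3,4$ (\cite{WataYos05}) and $d=5,6$ (\cite{AberEnes13}) one induces on dimension: cut $A$ by a sufficiently general $x\in\m$, compare $\ehk(A)$ with $\ehk(A/xA)$ up to explicit error terms, feed the inductive bound $\ehk(A/xA)\ge\ehk(A_{p,d-1})$ against the Han--Monsky/Gessel--Monsky value of $\ehk(A_{p,d})$ (for instance $\ehk(A_{p,4})=\tfrac{29p^2+15}{24p^2+12}$), and dispose by hand of the rings whose general hyperplane section is ``close'' to $A_{p,d-1}$, again via the second Hilbert--Kunz coefficient --- this is exactly where the lower-dimensional rigidity is re-used. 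For $d=5,6$ one additionally needs the estimate $\ehk(A_{p,d})\ge1+c_d$, which comes from bounds on the Han--Monsky recursion together with the Gessel--Monsky asymptotics. Finally, for item (4), complete intersections (\cite{EnesShim05}), one computes with the Frobenius directly: for a hypersurface $S/(f)$ the value $\ehk(S/(f))$ is read off from the $\delta$-sequence of $f$ in the Han--Monsky representation ring, a convexity/rearrangement argument shows the full-rank diagonal quadric $x_0^2+\cdots+x_d^2$ is extremal, and the general complete-intersection case follows by deforming to a hypersurface.

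Item (5) is a formal remark: by the Gessel--Monsky limit $\lim_{p\to\infty}\ehk(A_{p,d})=1+c_d$, Yoshida's conjectured monotonicity of $p\mapsto\ehk(A_{p,d})$ forces $\ehk(A_{p,d})\ge1+c_d$ for every $p$, so the two inequalities of Conjecture~\ref{LBconj}(1) collapse to one and part~(2) carries the remaining content. The hard part in all of the above is not the reduction to graded normal domains but (i) the equality analysis, which requires invariants strictly finer than $\ehk$ (the second Hilbert--Kunz coefficient, and the geometry of a resolution in dimension $2$), and (ii) for $d\ge5$ the absence of any closed formula for $\ehk(A_{p,d})$: with only the Han--Monsky algorithm (which needs $p>\max\{2,d\}$) and the Gessel--Monsky asymptotics in hand, obtaining bounds that are uniform in $p$ --- and in particular valid for $p=2$, which the hyperplane-section and reduction-mod-$p$ techniques tend to exclude --- is delicate, and it is precisely this uniformity gap that Yoshida's monotonicity conjecture in (5) is meant to bridge.
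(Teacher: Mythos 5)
This theorem is a survey of prior results; the paper offers no proof of its own, only citations, so there is nothing internal to compare your attempt against. Judged instead on how faithfully you describe what the cited papers actually do, your sketch is accurate on some points but materially wrong on others.

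Where your sketch goes astray: for dimensions $3\le d\le 6$ you describe an induction via a general hyperplane section, comparing $\ehk(A)$ with $\ehk(A/xA)$ and re-using lower-dimensional rigidity. That is \emph{not} the method of \cite{WataYos05} or \cite{AberEnes13}. Both of those papers use the \emph{volume estimation} technique --- the one this very paper recalls as Theorem~\ref{AE}: after reducing to a complete unmixed domain, pick a minimal reduction $J$ of $\mathfrak{m}$, estimate $\ehk(\mathfrak{m})\ge \rme(\mathfrak{m})\bigl(v_{s,d}-\sum_i v_{s-t_i,d}\bigr)$ using the functions $v_{s,d}$ (volumes of slices of $[0,1]^d$) and the asymptotic orders $t_i=\overline{\nu}_\mathfrak{m}(z_i)$, then optimize over $s$ and $r=\mu_A(\mathfrak{m}/J^*)$. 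Section~\ref{section_lowerHK} of this paper reproduces exactly that argument in the proofs of Theorem~\ref{Higher} and Lemma~\ref{3dimOPLemma}; hyperplane section induction with error terms does not appear, and it is not clear one could make it work (the Hilbert--Kunz multiplicity of a general hyperplane section is not well controlled in the way the usual multiplicity is).

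A second concrete problem is your reduction ``to the associated graded ring (or an $\mathbb N$-graded flat degeneration) to a graded normal domain.'' Hilbert--Kunz multiplicity is \emph{not} invariant under passage to $\mathrm{gr}_\mathfrak{m}(A)$, nor does it behave predictably under a generic flat degeneration, so this step would destroy the invariant you are trying to bound. The actual reductions used are completion and the additivity formula (your Proposition~\ref{Fund-HK}(3)) to pass to a complete unmixed domain --- that much you have right --- together with the reduction $\ehk(I)=\ehk(I^*)$ to tightly closed ideals, not a graded degeneration. Also be careful with your $d=1$ equality claim: for $d=1$ any double point (e.g.\ $k[[t^2,t^3]]$) satisfies $\ehk=2$ without being isomorphic to $A_{p,1}$, so Conjecture~\ref{LBconj}(2) is a genuinely $d\ge2$ phenomenon, and the cited theorems of Watanabe--Yoshida handle $d=1$ only for part~(1). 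Your discussion of items (4) and (5), and the observation that Gessel--Monsky's limit plus Yoshida's conjectured monotonicity collapses the two inequalities of Conjecture~\ref{LBconj}(1), are accurate.
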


%%%%%%  KY1127
\begin{obs}
If $p=2$ and $d=2m$ $(m=1,2,\ldots)$, then  
the following statement can be proved by using an argument in \cite{HanMon93} (see \cite{Yoshida} for details)
\[
\ell \left(A_{2,d}/\m^{[2^e]}\right) = \dfrac{2^m+1}{2^m} \; 2^{de}. 
\]
In particular, $\ehk(A_{2,d})=\frac{2^m+1}{2^m}$. 
\par \vspace{2mm}
Similarly, \cite{Yoshida} conjectures that  
if $d=2m-1$ $(m=1,2,\ldots)$, then 
\[
\ell (A_{2,d}/\m^{[2^e]}) = \frac{2^m}{2^m-1}\; 
2^{de} - \dfrac{(2^{m-1})^e}{2^m-1}
\] 
for every $e \ge 1$. In particular, if would follow that 
$
\ehk(A_{2,d})=\frac{2^m}{2^m-1}. 
$
\end{obs}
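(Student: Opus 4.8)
The plan is to prove the even case $d=2m$ outright — via a presentation of $A_{2,2m}$ that collapses the Han--Monsky computation to a square-zero complex — and to explain why the odd case $d=2m-1$ resists this simplification, so that it remains conjectural.

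\emph{The even case.} Put $q=2^e$ (with $e\ge1$) and $g=x_1x_2+x_3x_4+\cdots+x_{2m-1}x_{2m}\in B:=k[[x_1,\ldots,x_{2m}]]$. Since $\mathrm{char}\,k=2$ the defining relation is $x_0^2-g$, so $A_{2,2m}=B[x_0]/(x_0^2-g)$ is a free $B$-module on $\{1,x_0\}$; the first observation I would make is $x_0^q=(x_0^2)^{q/2}=g^{q/2}$, whence $\m^{[q]}A_{2,2m}=(g^{q/2},x_1^q,\ldots,x_{2m}^q)$. Reducing modulo $(x_1^q,\ldots,x_{2m}^q)$ and using $B$-freeness, I obtain
\[
A_{2,2m}/\m^{[q]}\ \cong\ \bigl(\bar B/h\bar B\bigr)^{\oplus2},\qquad
\bar B:=k[x_1,\ldots,x_{2m}]/(x_1^q,\ldots,x_{2m}^q),\quad
h:=\bar g^{\,q/2}=\sum_{i=1}^{m}x_{2i-1}^{q/2}x_{2i}^{q/2},
\]
the last equality by the Frobenius. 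Since $h^2=\sum_i x_{2i-1}^qx_{2i}^q=0$ in $\bar B$, multiplication by $h$ makes $\bar B$ a finite-length $k[t]/(t^2)$-module, and the exact sequence $0\to\ker h\to\bar B\xrightarrow{h}\bar B\to\bar B/h\bar B\to0$ gives
\[
\ell(A_{2,2m}/\m^{[q]})=2\dim_k(\bar B/h\bar B)=2\dim_k\ker(h)=\dim_k\bar B+\dim_k H,\qquad H:=\ker(h)/h\bar B.
\]
As $\dim_k\bar B=q^{2m}$, the whole problem reduces to showing $\dim_k H=(q^2/2)^m$.

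\emph{Computing $\dim_k H$.} Here I would argue by a Künneth-type decomposition. Write $\bar B=\bar B_1\otimes_k\cdots\otimes_k\bar B_m$ with $\bar B_i=k[x_{2i-1},x_{2i}]/(x_{2i-1}^q,x_{2i}^q)$ and $w_i=x_{2i-1}^{q/2}x_{2i}^{q/2}$, so $h=\sum_i w_i$ inside $\Lambda:=k[w_1,\ldots,w_m]/(w_1^2,\ldots,w_m^2)$. A monomial count shows $\operatorname{im}(w_i)$ is spanned by the $x_{2i-1}^ax_{2i}^b$ with $a,b\ge q/2$ and $\ker(w_i)$ by those with $a\ge q/2$ or $b\ge q/2$, so $\dim_k\operatorname{im}(w_i)=(q/2)^2$ and $\dim_k H(\bar B_i,w_i)=q^2-2(q/2)^2=q^2/2$; equivalently $\bar B_i\cong(k[w_i]/(w_i^2))^{\oplus(q/2)^2}\oplus k^{\oplus q^2/2}$ over $k[w_i]/(w_i^2)$. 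Distributing the tensor product, $\bar B\cong\bigoplus_{S\subseteq\{1,\ldots,m\}}M_S^{\oplus\mu_S}$ as $\Lambda$-modules, where $M_S$ is free of rank one over $k[w_i:i\in S]/(w_i^2)$ with the remaining $w_i$ acting by $0$ and $\mu_S=(q/2)^{2|S|}(q^2/2)^{m-|S|}$. On $M_S$ the operator $h$ acts as $\sum_{i\in S}w_i$, and, with the monomial grading, $(M_S,h)$ is the reduced simplicial cochain complex of the full simplex on the vertex set $S$, hence exact for $S\ne\varnothing$; so $H(M_S,h)=0$ for $S\ne\varnothing$ and $H(M_\varnothing,h)=k$, giving $\dim_k H=\mu_\varnothing=(q^2/2)^m$. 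Plugging in,
\[
\ell(A_{2,2m}/\m^{[q]})=q^{2m}+(q^2/2)^m=\frac{2^m+1}{2^m}\,q^{2m}=\frac{2^m+1}{2^m}\,2^{de},
\]
and dividing by $q^{d}$ gives $\ehk(A_{2,d})=\frac{2^m+1}{2^m}$.

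\emph{The odd case and the main obstacle.} For $d=2m-1$ one has $A_{2,2m-1}/\m^{[q]}\cong\bar B'/f\bar B'$ with $\bar B'=k[x_0,\ldots,x_{2m-1}]/(x_0^q,\ldots,x_{2m-1}^q)$ and $f=\sum_{i=1}^{m}x_{2i-2}x_{2i-1}$; since $f^q=0$ in $\bar B'$, the length equals the number of indecomposable $k[t]/(t^q)$-summands of $\bar B'$ (with $t$ acting by $f$). Writing $\bar B'=\bigotimes_i\bar B_i'$, one checks $\bar B_i'\cong\delta_q\oplus\delta_{q-1}^{\oplus2}\oplus\cdots\oplus\delta_1^{\oplus2}$ for $\delta_a:=k[t]/(t^a)$, so by the Han--Monsky theory the class of $\bar B'$ in the representation ring is the $m$-th power of this class under the Han--Monsky product $\ast$, and the length is extracted by counting summands via Monsky's characteristic-$2$ multiplication rules (among them $\delta_2\ast\delta_2=2\delta_2$ and $\delta_a\ast\delta_{2^e}=a\,\delta_{2^e}$ for $a\le2^e$); carrying this out reproduces the predicted $\frac{2^m}{2^m-1}2^{de}-\frac{(2^{m-1})^e}{2^m-1}$ for small $m$. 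The hard part — and the reason the odd case is only conjectured for general $m$ — is exactly this final step: for $q>2$ there is no square-zero Künneth shortcut as above, so one must genuinely control the full $m$-fold $\ast$-power of $\delta_q+2\sum_{j=1}^{q-1}\delta_j$ uniformly in $q=2^e$ and $m$, which is a delicate combinatorial problem in the Han--Monsky representation ring.
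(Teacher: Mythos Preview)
The paper does not supply its own proof of this observation; it simply points to the Han--Monsky machinery \cite{HanMon93} and to \cite{Yoshida} for details, and records the odd-dimensional formula only as a conjecture. So there is no in-paper argument to compare against, and I assess your proof on its own merits and against the spirit of the indicated reference.

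Your even-case argument is correct and self-contained. The passage from $A_{2,2m}/\m^{[q]}$ to two copies of $\bar B/h\bar B$ via the free $B$-basis $\{1,x_0\}$ and $x_0^q=g^{q/2}$, the reduction of $2\dim_k(\bar B/h\bar B)$ to $\dim_k\bar B+\dim_k H$ using $h^2=0$, and the K\"unneth decomposition over $\Lambda=k[w_1,\dots,w_m]/(w_1^2,\dots,w_m^2)$ all go through as written. The identification of $(M_S,h)$ with the augmented cochain complex of the full simplex on $S$ (no sign issues in characteristic~$2$) is a clean way to obtain exactness for $S\ne\varnothing$, giving $\dim_k H=\mu_\varnothing=(q^2/2)^m$ and hence the stated formula. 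This is exactly a specialization of the Han--Monsky representation-ring calculus to the square-zero situation, so it is fully in the spirit of the reference the paper invokes; what you gain is an elementary, explicit computation that avoids appealing to the general $\ast$-product rules.

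Your odd-case discussion is also accurate: the decomposition $\bar B_i'\cong\delta_q\oplus\delta_{q-1}^{\oplus2}\oplus\cdots\oplus\delta_1^{\oplus2}$ is correct, and you correctly locate the obstruction---for $q>2$ there is no square-zero shortcut, so one must control the $m$-fold Han--Monsky $\ast$-power of $\delta_q+2\sum_{j=1}^{q-1}\delta_j$ uniformly in $q=2^e$ and $m$. This matches the paper, which leaves the odd case as a conjecture.
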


Based upon these observations, we pose an improved conjecture as follows:

%%% 
\begin{conj} \label{Nconj}
Let $(A,\m, k)$ be a formally unmixed non-regular local ring 
of dimension $d \ge 1$ and with algebraically closed residue field.
Let $m \geq 1$ be an integer. 
\begin{enumerate}
\item  If $d=2m-1$,  then either $\widehat{A} \cong A_{p,d}$ or  $\ehk(A) > \frac{2^m}{2^m-1}$. 
\item If $d=2m$, then either $\widehat{A} \cong A_{p,d}$ or  $\ehk(A) > \frac{2^m+1}{2^m}$. 
\end{enumerate}
\end{conj}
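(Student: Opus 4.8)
The plan is to induct on $d$, reducing to a complete local domain and then splitting according to the multiplicity $e \coloneqq \rme(A)$. Since $\ehk$ and $\rme$ are unchanged by completion, assume $A$ complete, hence (being formally unmixed) unmixed. If $A$ has at least two minimal primes, or a single minimal prime $P$ with $\ell_{A_P}(A_P) \geq 2$, then Proposition~\ref{Fund-HK}(3) gives $\ehk(A) \geq 2\,\ehk(A/P) \geq 2$, which beats $\frac{2^m}{2^m-1}$ and $\frac{2^m+1}{2^m}$ once $d \geq 2$ (the case $d=1$, where $\ehk(A) = \rme(A)$, being handled directly); so we may assume $A$ is a complete local domain. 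Finally, if $\widehat A \cong A_{p,d}$ then the first alternative of the statement holds and $\ehk(A) = \ehk(A_{p,d}) \leq \frac{2^m}{2^m-1}$ (resp.\ $\leq \frac{2^m+1}{2^m}$), granting that $\ehk(A_{p,d})$ decreases in $p$ (Theorem~\ref{KnownResults}(5)); so it suffices to prove the strict inequality whenever $\widehat A \not\cong A_{p,d}$.

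When $d \geq 3$ and $e$ is large the bound is immediate: Theorem~\ref{Higher} (or Theorem~\ref{3dimOp} when $d = 3$) gives $\ehk(A) > \frac{e+d}{d!}$, which exceeds the target once $e \geq e_0(d)$ for an explicit threshold $e_0(d)$. For $d = 2,3$ the target equals $\ehk(A_{p,d})$, so Theorem~\ref{KnownResults}(1) --- which settles Conjecture~\ref{LBconj}, rigidity included, in those dimensions --- already gives the conclusion, and we may assume $d \geq 4$.

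For $e = 2$: Abhyankar's inequality and non-regularity force $\emb(A) = d+1$, so $A = R/I$ with $R = k[[x_0, \ldots, x_d]]$ and $\operatorname{ht} I = 1$; being an unmixed height-one ideal in the UFD $R$, $I$ is principal, so $A$ is a quadric hypersurface. Diagonalizing the defining form over $k = \bar k$ (two normal forms in characteristic two, as in Definition~\ref{Apd}) yields $\widehat A \cong A_{p, r-1}[[y_1, \ldots, y_{d+1-r}]]$ with $r = \rank q$; since $\ehk$ is unchanged by adjoining power-series variables, $\ehk(A) = \ehk(A_{p, r-1})$. If $r = d+1$ then $\widehat A \cong A_{p,d}$; otherwise $r - 1 \leq d - 1$, and using the closed formulas for $\ehk(A_{2,j})$ from the Observation together with monotonicity of $\ehk(A_{p,j})$ in $j$ and in $p$ one gets $\ehk(A) = \ehk(A_{p,r-1}) \geq \ehk(A_{2,d-1})$, which a short computation shows strictly exceeds the target: for instance $\ehk(A_{2,2m-1}) = \frac{2^m}{2^m-1} > \frac{2^m+1}{2^m}$ when $d = 2m$, and $\ehk(A_{2,2m-2}) = 1 + 2^{1-m} > \frac{2^m}{2^m-1}$ when $d = 2m-1$.

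The hard part --- and the reason this remains a conjecture for $d \geq 4$ --- is the range $3 \leq e < e_0(d)$ in dimension $d \geq 4$, where the bound $\frac{e+d}{d!}$ of Theorem~\ref{Higher} is far too weak. One would have to extend the dimension-by-dimension analyses of Watanabe--Yoshida and Aberbach--Enescu to arbitrary $d$: pass to $\operatorname{gr}_{\m}(A)$ or to a general hypersurface section, bound $\ell(\m^{[q]}/J^{[q]})$ for a minimal reduction $J$ much more sharply than in Theorem~\ref{Higher}, and classify the finitely many surviving configurations. The fundamental obstacle is that for $d \geq 4$ the target $\frac{2^m+1}{2^m}$ (resp.\ $\frac{2^m}{2^m-1}$) equals $\ehk(A_{p,d})$ at $p = 2$ and strictly exceeds $\ehk(A_{p,d})$ for odd $p$; thus even the full strength of Conjecture~\ref{LBconj} is insufficient, and one must rule out any unmixed non-regular ring whose Hilbert--Kunz multiplicity lies in the interval $(\ehk(A_{p,d}), \ehk(A_{2,d})]$. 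This appears to require a new lower bound for $\ehk$ in terms of $e$ and $\emb$ that is essentially sharp and uniform in $d$. Secondary difficulties are the characteristic-two normal forms, where the Han--Monsky algorithm behaves differently, and --- already for $e = 2$ when $d \geq 8$ --- the dependence on the monotonicity of $\ehk(A_{p,d})$ in $p$, which is itself open beyond $d = 6$.
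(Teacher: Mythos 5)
The statement you were asked to prove is Conjecture~\ref{Nconj}, which the paper poses as an \emph{open} problem; the paper offers no proof and immediately records that it is known only for $p \geq 3$ and $d \leq 4$ by prior work of Watanabe--Yoshida. There is therefore no proof in the paper to compare against, and you are right that a complete argument is out of reach. Your reduction to a complete local domain via Proposition~\ref{Fund-HK}(3), and your identification of the obstruction --- the range $3 \leq \rme(A) < e_0(d)$ in dimension $d \geq 4$, where Theorem~\ref{Higher} is far too weak --- both reflect the actual state of the art.

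That said, two of the partial steps you do assert are not correct. In the $\rme(A) = 2$ case, a hypersurface $R/(f)$ whose quadratic leading form has rank $r < d+1$ need not be $A_{p,r-1}[[y_1,\dots,y_{d+1-r}]]$: the splitting lemma normalizes $f$ to $q(x_1,\dots,x_r) + h(x_{r+1},\dots,x_{d+1})$ with $h$ of order at least $3$, and $h$ can be nonzero (e.g.\ $xy+zw+u^3$), in which case $\ehk(A)$ is governed by the Han--Monsky tensor formula and is \emph{not} $\ehk(A_{p,r-1})$. Moreover, your monotonicity in $p$ runs the wrong way: Theorem~\ref{KnownResults}(5) is that $\ehk(A_{p,j})$ is conjecturally \emph{decreasing} in $p$, so $\ehk(A_{p,r-1}) \leq \ehk(A_{2,r-1})$, not $\geq$, and the chain $\ehk(A_{p,r-1}) \geq \ehk(A_{2,d-1}) > (\text{target})$ does not follow. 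In fact the limiting value $1+c_{d-1}$ eventually falls below the target $\ehk(A_{2,d})$ (already $1 + c_7 = 332/315 < 17/16$ at $d=8$), so this route cannot close even the $\rme=2$ case uniformly in $d$. Finally, the $d=1$ case you propose to ``handle directly'' is itself problematic: a one-dimensional complete domain with $\rme = 2$ such as $k[[t^2,t^3]]$ has $\ehk = 2$ but is not isomorphic to $A_{p,1}$, so the stated dichotomy needs care at $d=1$.
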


By results of Watanabe and Yoshida, Conjecture \ref{Nconj} has an affirmative answer when $p \ge 3$ and $d \le 4$.
The following table depicts the difference between two conjectures. 

%\begin{center}
\begin{table}[h]
\renewcommand{\arraystretch}{1.5}
\begin{tabular}{|c|c|c|c|c|c|c|} \hline
$d$ & $1$ & $2$ & $3$ & $4$ & $5$ & $6$ \\[1mm] \hline  
$1+c_d$ & $2$ & $\frac{3}{2}$ & $\frac{4}{3}$ & 
$\frac{29}{24}$ & $\frac{17}{15}$ & $\frac{781}{720}$ \\ \hline
$(RHS)$ & $2$ & $\frac{3}{2}$ & $\frac{4}{3}$ & 
$\frac{5}{4}$ & $\frac{8}{7}$ & $\frac{9}{8}$ \\ \hline
\end{tabular}

\caption{Comparison between the two conjectured bounds.}
\label{Table of values}
\end{table}
%\end{center}

\subsection{Strong F-regularity and F-signature}
Hilbert--Kunz multiplicity is inherently connected with tight closure, a powerful theory developed by Hochster and Huneke in 
a series of papers starting at \cite{HochHun90}.

%%% 
\begin{defn}[cf. \cite{HochHun90}] \label{WeakFreg}
Let $I \subset A$ be an ideal, and let $x$ be an element of $A$.  
Put $A^{o}=A  \setminus \cup_{P \in \Min(A)} P$. 
For $x \in A$, we say that $x$ is in the \textit{tight closure} of $I$ (denoted by $I^{*}$) 
if there exists an element $c \in A^{o}$ such that $cx^q \in I^{[q]}$ 
for sufficiently large $q=p^e$. 

A local ring $A$ is said to be weakly F-regular (resp. F-rational) if any ideal $I$ 
(resp. any parameter ideal~$I$) is tightly closed, that is, $I^{*}=I$. 
\end{defn}
\par
A result of Hochster and Huneke \cite[Theorem~8.17]{HochHun90} asserts that $\ehk(I^*) = \ehk(I)$
and, moreover, $I^*$ is the largest ideal containing $I$ with same Hilbert--Kunz multiplicity. 
\par
On the other hand, F-signature 
coincides with the \emph{minimal relative Hilbert--Kunz multiplicity} \cite{WataYos04,Yao05b, PolstraTucker}
and is connected to the following class of singularities. 

%%% 
\begin{defn}[cf. \cite{HochHun94}] \label{StrFreg}
An F-finite local ring $A$ is called \textit{strongly F-regular} 
if for any $c \in A^{o}$, 
there exists $q = p^e$, $e \ge 1$ such that the map
$A \hookrightarrow A^{1/q}$ defined by $x \mapsto c^{1/q}x$ 
splits as an $A$-linear map. 
Any Noetherian ring $A$ is called  \textit{strongly F-regular}  
if any localization of $A$ is also a strongly F-regular local ring. 
\end{defn}

\par 
Strongly F-regular singularities enjoy many nice properties and are always normal and Cohen-Macaulay. 
For example, quotient singularities and toric singularities are strongly F-regular rings. As it was already mentioned, 
$\fsig(A) > 0$ if and only if $A$ is strongly F-regular by a result of Aberbach and Leuschke \cite[Theorem~0.2]{AberLeu}. The two notions of F-regularity are conjectured to be equivalent and are known to be equivalent in several cases, such as  Gorenstein rings \cite{HochHun89}.

\par 

The simple singularity $A_{p,d}$ discussed above is a hypersurface with $\rme(A_{p,d})=2$, thus by
 \cite[Example~2.3]{WataYos04} $\ehk(A)=2-\fsig(A)$ and $\fsig(A)$ attains the maximal value if  and only if $\ehk(A)$ is minimal.
 The following conjecture is then natural.

\begin{conj} \label{FsigConj}
Let $(A,\m)$ be a non-regular local ring of dimension $d \geq 1$. 
Then  $$\fsig(A)\le 2-\ehk(A_{p,d}) = \fsig(A_{p,d}).$$ 
%\begin{enumerate}
%\item If $d=2m-1$ $(m=1,2,\cdots)$, then $\fsig(A) \le \frac{2^m-2}{2^m-1}$. 
%\item If $d=2m$ $(m=1,2,\cdots)$, then $\fsig(A) \le \frac{2^m-1}{2^m}$. 
%\end{enumerate}
\end{conj}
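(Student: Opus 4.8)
This is a conjecture rather than a theorem I can establish, so I will describe the strategy, perform the reductions it admits, record the cases already within reach of the results above, and then isolate the remaining obstruction.

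\emph{Reductions.} The F-signature and strong F-regularity are unchanged under completion and under the extension $k\to\overline{k}$ of the residue field, so I may assume $A$ is complete with $k=\overline{k}$. If $A$ is not strongly F-regular then $\fsig(A)=0\le\fsig(A_{p,d})$ by \cite{AberLeu}, so assume $A$ is strongly F-regular; then $A$ is automatically a normal Cohen--Macaulay domain. If $d=1$ then $A$ is a discrete valuation ring, hence regular, and $2-\ehk(A_{p,1})=0$, so there is nothing to prove. For $d\ge 2$ one has $2-\ehk(A_{p,d})\ge 2-\ehk(A_{p,2})=\tfrac12$, since $\ehk(A_{p,2})=1+c_2=\tfrac32$ and the values $\ehk(A_{p,d})$ for $d\ge 2$ (computed via \cite{HanMon93, GeMo10}) are non-increasing in $d$. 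Consequently, if $A$ is \emph{not} Gorenstein, then Proposition~\ref{UPforNonGor} gives $\fsig(A)\le\tfrac12\le 2-\ehk(A_{p,d})$ and we are done. It therefore suffices to treat the Gorenstein case.

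\emph{Gorenstein rings: cases within reach.} Suppose $A$ is Gorenstein with $\rme(A)=2$; then $A$ is a hypersurface, and by \cite[Example~2.3]{WataYos04} we have the \emph{exact identity} $\ehk(A)=2-\fsig(A)$. Combined with the Hilbert--Kunz lower bound $\ehk(A)\ge\ehk(A_{p,d})$ of Conjecture~\ref{LBconj}(1), this yields $\fsig(A)=2-\ehk(A)\le 2-\ehk(A_{p,d})=\fsig(A_{p,d})$. By Theorem~\ref{KnownResults} this lower bound is available for all $d\le 6$ and for every complete intersection. In particular the conjecture holds \emph{completely in dimension $2$}: a two-dimensional complete strongly F-regular Gorenstein normal domain is F-rational, hence has a rational singularity, hence is a rational double point, that is, a hypersurface of multiplicity $2$. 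For toric rings the conjecture is contained in Theorem~\ref{maximaltoric}, and for $d=3$ with $\rme(A)\ge 3$ Theorem~\ref{Gor-MaxF-sig} gives $\fsig(A)\le\rme(A)/24$, which is $\le\tfrac{16}{24}=2-\ehk(A_{p,3})$ as soon as $\rme(A)\le 16$.

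\emph{The remaining case.} What is left is the genuinely hard case: $A$ Gorenstein, strongly F-regular, complete, non-regular, with $k=\overline{k}$, $\dim A\ge 3$ and $\rme(A)\ge 3$ (and, in dimension $3$, $\rme(A)\ge 17$) — precisely the scope of Question~\ref{que_maxFsig}. One concrete line of attack is to prove an a priori bound forcing the F-signature of a high-multiplicity singularity to be small, say of the shape $\fsig(A)\le\rme(A)^{-1/(d-1)}$ for non-regular strongly F-regular $A$. This is (just barely) consistent with the extremal values $\fsig(A_{p,d})$, and together with the multiplicity-two case above it would imply the conjecture; carrying it out would amount to extending the local-cohomology and minimal-number-of-generators estimates behind Theorem~\ref{Gor-MaxF-sig} to all dimensions, and this is where I expect the main difficulty to lie.

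\emph{Other routes, and the core difficulty.} Alternatively, one could aim for a rigidity statement — in the spirit of Theorems~\ref{Qgor} and~\ref{ToricMain}, which pin down the equality case of $\fsig\le\tfrac12$ — showing that a singularity whose F-signature is close to $\fsig(A_{p,d})$ must be toric or a hypersurface, hence already covered; or one could try to transfer the analogous phenomenon for normalized volumes of klt singularities in characteristic $0$, where the ordinary double point is known to be the non-smooth maximizer, through the known comparisons between F-signature and normalized volume (though these comparisons do not currently match the extremal value sharply enough). The difficulty common to all of these is structural: in contrast with the Hilbert--Kunz lower bound, there is \emph{no} valid pointwise inequality $\fsig(A)\le 2-\ehk(A)$ to lean on — the Hilbert--Kunz multiplicity of a strongly F-regular local ring can be arbitrarily large, e.g. for high Veronese subrings — so the constant $2-\ehk(A_{p,d})$ must be produced by an argument intrinsic to the F-signature rather than imported from the Hilbert--Kunz side.
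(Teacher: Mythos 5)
The statement you were given is labelled a conjecture in the paper, and the paper offers no proof of it; you are right to flag this at the outset, and a survey of reductions and known cases is the appropriate response here. Your logical skeleton is sound: reduce to the complete, strongly F-regular case with $k=\overline{k}$; dispose of $d=1$; observe that $2-\ehk(A_{p,d})\ge\tfrac12$ for $d\ge 2$, so Proposition~\ref{UPforNonGor} settles the non-Gorenstein case; settle Gorenstein rings of multiplicity~$2$ via $\ehk(A)=2-\fsig(A)$ together with Conjecture~\ref{LBconj}(1) when the latter is known ($d\le 6$ or complete intersections); note dimension~$2$ is closed via the rational double point classification; invoke Theorems~\ref{maximaltoric} and~\ref{Gor-MaxF-sig} for toric rings and for $d=3$ with $3\le\rme(A)\le 16$. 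You correctly isolate what remains open — Gorenstein, strongly F-regular, $d\ge 3$ with large multiplicity, i.e.\ exactly the scope of Question~\ref{que_maxFsig} — and your discussion of potential routes and of why a pointwise inequality $\fsig(A)\le 2-\ehk(A)$ cannot be the mechanism is apt.

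Two claims in your write-up deserve the justification you omit. First, the assertion that $\ehk(A_{p,d})$ is non-increasing in $d$ for fixed $p$ (needed both for $2-\ehk(A_{p,d})\ge\tfrac12$ and for the toric check below) does not follow simply by ``computing via \cite{HanMon93, GeMo10}''; it follows by applying the Enescu--Shimomoto complete intersection bound of Theorem~\ref{KnownResults}(4) to the non-regular hypersurface $A_{p,d}[[t]]$ of dimension $d+1$, which has the same Hilbert--Kunz multiplicity as $A_{p,d}$. Second, saying the toric case is ``contained in Theorem~\ref{maximaltoric}'' is a slight shortcut: one must still verify the conjectured inequality for the two non-regular rings listed there, and for $\bbP^2\#\bbP^2$ in dimension~$5$ this requires the small arithmetic check that $\ehk(A_{p,5})\le\ehk(A_{p,4})=\tfrac{29p^2+15}{24p^2+12}\le\tfrac{29}{20}=2-\tfrac{11}{20}$. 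With these two points supplied, your account of the state of the conjecture is accurate.
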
  

The theory of F-signature originates in the following particular case 
of rings of finite F-representation type, which was introduced by 
Smith and Van den Bergh \cite{SmVand97} (see also \cite{Yao05a}). 

%%% 
\begin{defn}
We say that $A$ has \textit{finite F-representation type $($FFRT$)$} if 
there is a finite set $\mathcal{S}=\{M_0,M_1,\ldots,M_n \}$ of 
isomorphism classes of indecomposable finitely generated $A$-modules 
such that for any positive integer $e$, $F_{*}^eA$ is isomorphic to 
a finite direct sum of these modules, that is, 
\[
F_{*}^eA \cong M_0^{\oplus c_{0,e}} \oplus M_1^{\oplus c_{1,e}}
\oplus \cdots \oplus 
M_n^{\oplus c_{n,e}}
\] 
for some $c_{i,e} \in \mathbb{Z}_{\ge 0}$. 
Moreover, we say that a finite set $\mathcal{S}=\{M_0,M_1,\ldots,M_n\}$ as above 
is the \textit{$($FFRT$)$ system} of $A$ if every $A$-module $M_i$ appears non-trivially in $F_{*}^eA$ as a direct summand for some $e \in \mathbb{N}$. 
\end{defn}

%%%%%%%%%%%%%%%%%%%%%%%%%%%%%%%%%%%%%%%%%%%%%%%%
%%% Section 3
\section{Lower bound on Hilbert--Kunz multiplicities}
\label{section_lowerHK}

The last two authors gave a lower bound on Hilbert--Kunz 
multiplicities of two-dimensional unmixed (Cohen-Macaulay) local rings
$A$ in terms of usual multiplicities:
\[
\ehk(I) \ge \dfrac{\rme(I)+1}{2} 
\] 
for any $\m$-primary ideal $I$ of $A$ \cite{WataYos01a}. 
In this section, we consider a higher dimensional analogue of this inequality; 
see Theorem \ref{Higher}. 

\par \vspace{2mm}
We recall \cite[Theorem 3.2]{AberEnes13} which improves the volume estimation technique
developed in \cite{WataYos05}.
For any real number $s$ we define $v_{s,d}$ to be 
the volume of $\{(x_1, \ldots, x_d) \in [0,1]^d \mid \sum_{i = 1}^d x_i \leq s\}$ which can be computed as
\[
v_{s,d} = \sum_{n = 0}^{\lfloor s \rfloor} 
(-1)^n \frac{(s - n)^d}{(d-n)!\ n!}, 
\]
where $\lfloor ~\quad ~\rfloor$ stands for round down. 
\par
For an element $x \in A$ we denote 
\[
\overline{\nu}_I(x) \coloneqq \lim_{n \to \infty} \frac{\sup \{k \mid x^n \in I^k\}}{n}.
\]
It is known that the limit exists and 
$\overline{\nu}_I(x) \geq k$ if and only if 
$x \in \overline{I^k}$; 
see Rees \cite{Rees56}.

%%%  
\begin{thm}[\textrm{Aberbach--Enescu, \cite{AberEnes13}}]
\label{AE}
Let $(A, \m)$ be a formally unmixed reduced local ring of characteristic $p > 0$ and dimension $d$.
Let $J$ be a minimal reduction of an $\m$-primary ideal $I$ and let $r$ be an integer such that 
$r \geq \mu_A(I/J^*)$.
For every real number $s \geq 0$, we have
\[
\ehk(I) \geq \rme(I) \bigg(v_{s,d} - \sum_{i = 1}^r v_{s - t_i,d} \bigg),
\]
where $t_i = \overline{\nu}_I(z_i)$ for $z_1, \ldots, z_r$ generators of $I$ modulo $J^{*}$. 
\par 
In particular, 
\[
\ehk(I) \geq \rme(I) \left(v_{s,d} - r\cdot v_{s - 1,d} \right).
\]
\end{thm}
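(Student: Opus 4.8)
\emph{Strategy and reductions.} The plan is to put $A$ into a Noether-normalized situation over a power series subring attached to $J$, to replace $I$ by a tight-closure equivalent ideal of the shape $J+(z_1,\dots,z_r)$, and then to bound $\ell_A(A/I^{[q]})$ from below by a count of monomials, peeling off the $z_i^{q}$ one at a time; all discrepancies will be of lower order $o(q^{d})$ and the stated inequality will survive the limit $q\to\infty$. Concretely: completing $A$ and then passing to a faithfully flat local extension with algebraically closed residue field preserves formal unmixedness and reducedness and leaves $\ehk(I)$, $\rme(I)$, the numbers $\overline{\nu}_I(z_i)$ and $\mu_A(I/J^{*})$ unchanged, so I may assume $A$ is complete (hence analytically unramified) with $A/\m=k$ algebraically closed. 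Then $J$ is generated by a system of parameters $y_1,\dots,y_d$, the subring $R:=k[[y_1,\dots,y_d]]$ of $A$ is a regular local ring over which $A$ is module-finite (with $R\hookrightarrow A$ injective since $A$ is unmixed of dimension $d=\dim R$), we have $A/\m=k=R/\m_R$ so finite lengths agree whether computed over $A$ or over $R$, and $\rho:=\rank_R A=\rme(\m_R A,A)=\rme(I)$. Since $z_1,\dots,z_r$ generate $I$ modulo $J^{*}$ and $J\subseteq J^{*}$, we get $J+(z_1,\dots,z_r)\subseteq I\subseteq (J+(z_1,\dots,z_r))^{*}$; because tight closure changes neither Hilbert--Kunz multiplicity nor the integral closures $\overline{I^{n}}$ (hence not the $\overline{\nu}_I(z_i)$), I replace $I$ by $J+(z_1,\dots,z_r)$, so that $I^{[q]}=\m_R^{[q]}A+(z_1^{q},\dots,z_r^{q})$ and $\overline{I^{n}}=\overline{\m_R^{n}A}$.

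\emph{Peeling and the monomial count.} Fix $s\ge 0$ and $q=p^{e}$, set $\mathfrak N_q:=\m_R^{[q]}+\m_R^{\lceil sq\rceil}\subseteq R$ and $\mathfrak b_q:=\mathfrak N_q A$. Then $I^{[q]}+\mathfrak b_q=\mathfrak b_q+(z_1^{q},\dots,z_r^{q})$, which is generated over $\mathfrak b_q$ by $r$ elements, so
\[
\ell_A\!\big(A/I^{[q]}\big)\ \ge\ \ell_A\!\big(A/(\mathfrak b_q+(z_1^{q},\dots,z_r^{q}))\big)\ \ge\ \ell_A\!\big(A/\mathfrak b_q\big)\ -\ \sum_{i=1}^{r}\ell_A\!\big(A/(\mathfrak b_q:z_i^{q})\big).
\]
The monomial ideal $\mathfrak N_q$ is $\m_R$-primary and $R/\mathfrak N_q$ has $k$-basis $\{\,y^{\alpha}\ :\ 0\le\alpha_j\le q-1,\ |\alpha|<\lceil sq\rceil\,\}$, whose cardinality is $v_{s,d}q^{d}+O(q^{d-1})$, a Riemann sum for $\operatorname{vol}\big([0,1]^{d}\cap\{\sum x_j\le s\}\big)$. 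For the error terms, $\overline{\nu}_I(z_i^{q})=q\,\overline{\nu}_I(z_i)=q\,t_i$, so $z_i^{q}\in\overline{I^{\lfloor q t_i\rfloor}}=\overline{\m_R^{\lfloor q t_i\rfloor}A}$; since $A$ is analytically unramified, the integral closure of the Rees algebra of $\m_R A$ is a finite module over that Rees algebra, so there is a constant $c$ independent of $q$ with $\overline{\m_R^{n}A}\subseteq\m_R^{\,n-c}A$ for $n\ge c$. Hence $z_i^{q}\in\m_R^{m_i}A$ with $m_i:=\lfloor q t_i\rfloor-c$ and $m_i/q\to t_i$, which gives $\big(\mathfrak N_q:_R\m_R^{m_i}\big)A\subseteq \mathfrak b_q:_A z_i^{q}$; and an elementary computation with the monomial ideal $\mathfrak N_q:_R\m_R^{m_i}$ bounds $\ell_R\big(R/(\mathfrak N_q:_R\m_R^{m_i})\big)$ by the number of $y^{\alpha}$ with $0\le\alpha_j\le q-1$ and $|\alpha|<\lceil sq\rceil-m_i$, which is $v_{s-m_i/q,\,d}\,q^{d}+O(q^{d-1})$.

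\emph{Comparison over $R$, and conclusion.} Taking $\rho$ elements of $A$ that form an $R$-basis after localizing at $(0)$ yields an injection $R^{\rho}\hookrightarrow A$ with cokernel of dimension $<d$, and clearing denominators yields an injection $A\hookrightarrow R^{\rho}$ with cokernel of dimension $<d$; tensoring these over $R$ with $R/\mathfrak N$ and using right exactness — the cokernels $C$ satisfy $\ell(C/\mathfrak N C)\le\ell(C/\m_R^{[q]}C)=O(q^{d-1})$ since $\dim C<d$ and $\m_R^{[q]}\subseteq\mathfrak N$ is $\m_R$-primary — gives, for every $\m_R$-primary ideal $\mathfrak N\supseteq\m_R^{[q]}$ of $R$,
\[
\rho\,\ell_R(R/\mathfrak N)-O(q^{d-1})\ \le\ \ell_A(A/\mathfrak N A)\ \le\ \rho\,\ell_R(R/\mathfrak N)+O(q^{d-1}).
\]
Applying the lower bound with $\mathfrak N=\mathfrak N_q$ and the upper bound with $\mathfrak N=\mathfrak N_q:_R\m_R^{m_i}$ in the displayed peeling inequality, and recalling $\rho=\rme(I)$, we obtain
\[
\ell_A\!\big(A/I^{[q]}\big)\ \ge\ \rme(I)\,v_{s,d}\,q^{d}\ -\ \sum_{i=1}^{r}\rme(I)\,v_{s-m_i/q,\,d}\,q^{d}\ +\ o(q^{d}).
\]
Dividing by $q^{d}$, letting $q\to\infty$, and using $v_{s-m_i/q,d}\to v_{s-t_i,d}$ (continuity of $u\mapsto v_{u,d}$) yields the first inequality. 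The ``in particular'' is immediate: $z_i\in I\subseteq\overline{I}=\overline{I^{1}}$ forces $t_i=\overline{\nu}_I(z_i)\ge 1$, and $u\mapsto v_{u,d}$ is nondecreasing, so $v_{s-t_i,d}\le v_{s-1,d}$.

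I expect the genuine difficulty to lie in the two uniformity statements that turn the volume heuristic into a rigorous $o(q^{d})$ estimate: first, that $z_i^{q}$ lies in a power $\m_R^{m_i}A$ with $m_i=q\,t_i-O(1)$ rather than merely $m_i/q\to t_i$ with an uncontrolled lower-order loss — for which the finiteness of the integral closure of the Rees algebra of $\m_R A$ is the crucial input — and second, the uniform length comparison between $A$ and the regular subring $R$ displayed above. By contrast, the monomial and lattice-point bookkeeping for $\mathfrak N_q$ and for the colon ideals $\mathfrak N_q:_R\m_R^{m_i}$ is elementary, requiring only a little care near the faces of the cube $[0,1]^{d}$.
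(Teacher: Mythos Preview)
The paper does not prove this statement; it is quoted from Aberbach--Enescu \cite{AberEnes13} as a tool and used immediately in the proofs of Theorem~\ref{Higher} and Lemma~\ref{3dimOPLemma}. Your outline is a faithful reconstruction of the argument in \cite{AberEnes13}: the reduction to a complete ring, the Noether normalization $R=k[[y_1,\dots,y_d]]\hookrightarrow A$ attached to the minimal reduction $J$, the replacement of $I$ by $J+(z_1,\dots,z_r)$ via tight closure, the peeling inequality for $\ell_A(A/(\mathfrak b_q+(z_1^{q},\dots,z_r^{q})))$, the use of analytic unramifiedness to obtain $z_i^{q}\in\m_R^{\,qt_i-O(1)}A$, and the uniform length comparison $\ell_A(A/\mathfrak N A)=\rho\,\ell_R(R/\mathfrak N)+O(q^{d-1})$ are precisely the ingredients Aberbach and Enescu assemble, and your identification of the two uniformity statements as the substantive points is accurate.
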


\par 
Using the above theorem and the technique developed in \cite{AberEnes08}, we can improve Proposition~\ref{Fund-HK}. 

%%% 
\begin{thm} \label{Higher}
Let $(A,\m)$ be a formally unmixed local ring 
of characteristic $p>0$. 
If $d=\dim A \ge 3$, then for every $\m$-primary ideal $I$ we have 
\[
\ehk(I) > \dfrac{\rme(I)+d}{d!}.
\] 
\end{thm}

\begin{defn} 
A Cohen-Macaulay local ring $(A, \m)$ is said to have \textit{minimal multiplicity}
if $\mu_A(\m)=\rme(A)+\dim A-1$. 
This condition is equivalent to requiring that $\m$ is stable, that is, 
$\m^2=J\m$ for some minimal reduction $J$ of $\m$. 
\end{defn}

In what follows, we may assume that $A$ is complete 
and the residue field $k=A/\m$ is infinite since such extensions do not 
affect multiplicity and Hilbert--Kunz multiplicity.

%%% 
\begin{lemma}\label{Ineq}
Let $(A,\m,k)$ be a formally unmixed local ring of characteristic $p>0$.
Let $I$ denote an $\m$-primary ideal and $J$ its 
minimal reduction. 
Then $\mu_A(I/J^{*}) \le \rme(I)-1$. Moreover, the equality holds if and only 
if $I = \mf m$ and $\widehat{A}$ is F-rational and has minimal multiplicity. 
\end{lemma}

\begin{proof}
By definition, 
$\mu_A(I/J^{*}) = 
\ell_A(I/J^{*}+\m I)$.
Thus, by colon-capturing (see for example \cite[Lemma~4 and proof of Theorem~6]{MQS}), 
\begin{equation}\label{eq num gen}
\mu_A(I/J^{*}) = 
 \ell_A(A/J^{*}) - \ell_A(A/I) - \ell_A(\m I/J^*\cap\m I)
\le \rme(I) - 1 - \ell_A(\m I/J^*\cap\m I),
\end{equation}
and the first assertion follows.

For the second assertion we recall that the equality $\rme(J) = \ell_A (A/J^*)$ for some parameter ideal characterizes F-rationality by a theorem of Goto and Nakamura \cite{GN}. 
Hence, the equality in (\ref{eq num gen}) forces $A$ to be F-rational and $I = \mf m$. 
However, for $\mu_A(I/J^{*}) = \rme(I) - 1$ we additionally need that $\ell_A(\m I/J^*\cap\m I) = 0$, i.e., 
$\m^2 \subseteq J$. Since $J$ is a minimal reduction, $\m^2 \cap J = \m J$, so 
this condition is equivalent to minimal multiplicity. Note that a complete F-rational ring is necessarily Cohen-Macaulay.
\end{proof}

\par 
The following proposition gives a refinement 
of Aberbach and Enescu \cite[Corollary 3.4]{AberEnes08}. 

%%% 
\begin{prop} \label{Half}
Let $(A,\m)$ be a Cohen-Macaulay local ring of dimension $d \ge 1$. 
Let $I$ be an $\m$-primary ideal and suppose that 
there exists a minimal reduction $J$ of $I$ 
such that $I^2 \subset J$ $($e.g. $I$ is stable$)$. 
Then 
\[
\ehk(I) \ge \dfrac{\rme(I)}{2}. 
\]
\end{prop}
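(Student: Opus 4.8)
The plan is to run the standard colon/Matlis-duality computation not on $A$ itself but on a canonical module $\omega$ of $A$, because modulo a parameter ideal the canonical module becomes the injective hull of the residue field of the resulting Artinian ring, and this is exactly what replaces a Gorenstein hypothesis that would otherwise be needed.

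First, by the usual faithfully flat base changes $A\rightsquigarrow\widehat A\rightsquigarrow\widehat A[t]_{\m\widehat A[t]}$, which affect neither $\ehk(I)$ nor $\rme(I)$ nor the Cohen--Macaulay property and keep $J$ a minimal reduction, I may assume $A$ is complete with infinite residue field; then $A$ has a canonical module $\omega$, a maximal Cohen--Macaulay module of rank one, and $J=(x_1,\dots,x_d)$ is a parameter ideal. Since $A$ is Cohen--Macaulay and $J$ is a parameter ideal, $\rme(I)=\rme(J,A)=\ell_A(A/J)$; since $x_1,\dots,x_d$, hence $x_1^q,\dots,x_d^q$ for $q=p^e$, is a regular sequence on the maximal Cohen--Macaulay module $\omega$, and since $\omega$ has rank one,
\[
\ell_A\!\left(\omega/J^{[q]}\omega\right)=q^{d}\,\ell_A\!\left(\omega/J\omega\right)=q^{d}\,\rme(J,\omega)=q^{d}\,\rme(I).
\]

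Next I compare $\omega/J^{[q]}\omega$ with $\omega/I^{[q]}\omega$. From $I^{2}\subseteq J$ and the additivity of $q$-th powers in characteristic $p$, one has $(I^{[q]})^{2}=(I^{2})^{[q]}\subseteq J^{[q]}$, hence $I^{[q]}\cdot(I^{[q]}\omega)\subseteq J^{[q]}\omega$; thus the image of $I^{[q]}\omega$ in $\omega/J^{[q]}\omega$ is annihilated by the ideal $I^{[q]}/J^{[q]}$ of $R:=A/J^{[q]}$, so that
\[
\frac{I^{[q]}\omega}{J^{[q]}\omega}\ \subseteq\ \frac{\left(J^{[q]}\omega:_{\omega}I^{[q]}\right)}{J^{[q]}\omega}\ =\ \Hom_{R}\!\left(A/I^{[q]},\,\omega/J^{[q]}\omega\right).
\]
The crucial point is that $\omega/J^{[q]}\omega$, being the quotient of a canonical module by a regular sequence, is the canonical module of the Artinian local ring $R$, and the canonical module of an Artinian local ring is the injective hull of its residue field; hence $\Hom_{R}(-,\,\omega/J^{[q]}\omega)$ is Matlis duality over $R$, which preserves length, and so $\ell_A\bigl(\Hom_{R}(A/I^{[q]},\,\omega/J^{[q]}\omega)\bigr)=\ell_A(A/I^{[q]})$. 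In particular $\ell_A(I^{[q]}\omega/J^{[q]}\omega)\le\ell_A(A/I^{[q]})$, so combining with the short exact sequence $0\to I^{[q]}\omega/J^{[q]}\omega\to\omega/J^{[q]}\omega\to\omega/I^{[q]}\omega\to0$ and the first display yields
\[
q^{d}\,\rme(I)=\ell_A\!\left(\omega/I^{[q]}\omega\right)+\ell_A\!\left(I^{[q]}\omega/J^{[q]}\omega\right)\ \le\ \ell_A\!\left(\omega/I^{[q]}\omega\right)+\ell_A\!\left(A/I^{[q]}\right).
\]
Dividing by $q^{d}$ and letting $q\to\infty$ gives $\rme(I)\le\ehk(I,\omega)+\ehk(I,A)$, and since $\ell_{A_{P}}(\omega_{P})=\ell_{A_{P}}(A_{P})$ for each $P\in\Assh(A)$, Proposition~\ref{Fund-HK}(3) yields $\ehk(I,\omega)=\ehk(I,A)=\ehk(I)$; therefore $\rme(I)\le 2\,\ehk(I)$.

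The main obstacle is conceptual rather than computational: realizing that the argument must be carried out on $\omega$, so that $\omega/J^{[q]}\omega$ plays the role of the injective hull $E_{R}$ and the colon module $\left(J^{[q]}\omega:_{\omega}I^{[q]}\right)/J^{[q]}\omega$ acquires length exactly $\ell_A(A/I^{[q]})$ with no Gorenstein assumption. The remaining ingredients are routine: that the quotient of a canonical module by a regular sequence is again a canonical module, and the identity $\ehk(I,\omega)=\ehk(I)$ for $A$ not necessarily a domain, supplied by Proposition~\ref{Fund-HK}(3).
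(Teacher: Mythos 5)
Your argument is essentially the paper's: both run the colon computation on $\omega$ rather than on $A$, identify $(J^{[q]}\omega:_\omega I^{[q]})/J^{[q]}\omega$ as the Matlis dual of $A/I^{[q]}$ over $A/J^{[q]}$ (hence of length $\ell_A(A/I^{[q]})$), and conclude by dividing by $q^d$ and taking limits. One small caveat: the canonical module of a Cohen--Macaulay local ring has rank one only when $A$ is generically Gorenstein, so the justification of $\ell_A(\omega/J^{[q]}\omega)=q^d\rme(I)$ via ``$\omega$ has rank one'' should instead appeal to the additivity formula $\rme(J,\omega)=\sum_{P\in\Assh(A)}\rme(J,A/P)\,\ell_{A_P}(\omega_P)=\rme(J)=\rme(I)$ (using Matlis duality over each Artinian $A_P$), which is the same reasoning you already invoke, via Proposition~\ref{Fund-HK}(3), to get $\ehk(I,\omega)=\ehk(I)$.
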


\begin{proof}
Let $\omega_A$ denote the canonical module of $A$. 
First we observe that $\ehk(J, \omega_A) = \ehk(J)$ for any $\m$-primary ideal $J$.
Namely, we use Proposition~\ref{Fund-HK}(3) after noting 
that $\ell_A ((\omega_A)_P) = \ell_A (\omega_{A_P}) = \ell_A (A_P)$,
by the Matlis duality in the artinian ring $A_P$.

Since $I^{[q]}\omega_A \subseteq J^{[q]} \omega_A \colon I^{[q]}$ for any $q=p^e$ 
by assumption, we get 
\begin{eqnarray*}
\ell_A(\omega_A/J^{[q]}\omega_A) 
&=& \ell_A(\omega_A/J^{[q]}\omega_A \colon I^{[q]})
+\ell_A(J^{[q]}\omega_A \colon I^{[q]}/J^{[q]}\omega_A) \\[1mm]
&\le & \ell_A(\omega_A/I^{[q]}\omega_A) + 
\ell_A(J^{[q]}\omega_A \colon I^{[q]}/J^{[q]}\omega_A).
\end{eqnarray*} 
Then 
\[
\lim_{q\to\infty} \dfrac{\ell_A(\omega_A/J^{[q]}\omega_A)}{q^d}
=\ehk(J, \omega_A)=\ehk(J)=\rme(I), \quad \; 
\lim_{q\to\infty} \dfrac{\ell_A(\omega_A/I^{[q]}\omega_A)}{q^d}
=\ehk(I, \omega_A)=\ehk(I).
\]
On the other hand, since 
\[
(J^{[q]}\omega_A \colon I^{[q]})/J^{[q]}\omega_A 
 \cong  \Hom_{A/J^{[q]}}(A/I^{[q]}, \omega_A/J^{[q]}\omega_A) 
 \cong  \Hom_{A/J^{[q]}}(A/I^{[q]}, \omega_{A/J^{[q]}})
 \cong \omega_{A/I^{[q]}},  
\]
we get 
\[
\ell_A((J^{[q]}\omega_A \colon I^{[q]})/J^{[q]}\omega_A) 
= \ell_A(\omega_{A/I^{[q]}})=\ell_A(A/I^{[q]})
\]
by Matlis duality. 
Hence 
\[
\lim_{q\to\infty} 
\dfrac{\ell_A((J^{[q]}\omega_A \colon I^{[q]})/J^{[q]}\omega_A)}{q^d}
=\ehk(I)
\]
and thus $\rme(I) \le 2 \cdot \ehk(I)$, as required. 
\end{proof}

%%% 
\begin{cor}[\textrm{\cite{AberEnes08}}] \label{ehk-half}
Let $A$ be a Cohen-Macaulay local ring of dimension $d$ with minimal multiplicity. 
Then $\ehk(A) \ge \frac{\rme(A)}{2}$. 
\end{cor}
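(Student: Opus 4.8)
The statement is Corollary~\ref{ehk-half}: if $A$ is Cohen--Macaulay with minimal multiplicity, then $\ehk(A) \ge \frac{\rme(A)}{2}$. The idea is to apply Proposition~\ref{Half} to the ideal $I = \m$, so the whole task reduces to producing a minimal reduction $J$ of $\m$ with $\m^2 \subseteq J$. This is exactly the content of the asserted equivalence in the definition of minimal multiplicity, so the first step is to justify that equivalence: minimal multiplicity $\mu_A(\m) = \rme(A) + d - 1$ holds if and only if $\m$ is stable, i.e. $\m^2 = J\m$ for some minimal reduction $J$ of $\m$.

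First I would reduce to the case where the residue field $k = A/\m$ is infinite (as already arranged in the running assumptions of the section), so that a minimal reduction $J = (a_1, \ldots, a_d)$ of $\m$ generated by a system of parameters exists, with $\ell_A(A/J) = \rme(A)$ by Cohen--Macaulayness. Then I would compare $\mu_A(\m)$ with the length count: from the exact sequences relating $\m/J$, $\m^2 + J/J$, and $\m/\m^2 + J$, one has $\mu_A(\m/J) = \ell_A(\m/\m^2 + J)$, and $\mu_A(\m) = d + \mu_A(\m/J)$ when $J \subseteq \m$ is generated by part of a minimal generating set of $\m$ (which can be arranged since $k$ is infinite). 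Combining, $\mu_A(\m) = d - 1 + \ell_A(A/J) - \ell_A(A/(\m^2 + J)) + (\text{correction terms})$; working this out gives $\mu_A(\m) \le \rme(A) + d - 1$ always, with equality precisely when $\m^2 + J = J$, i.e. $\m^2 \subseteq J$. Since $\m^2 \subseteq J$ forces $\m^2 = J\m$ (because $J \subseteq \m$ gives $J\m \subseteq \m^2$ and $\m^2 \subseteq J$ combined with $\m^2 = \m \cdot \m$ yields $\m^2 = \m^2 \cap J \subseteq$ ... one checks $\m \m^2 \subseteq \m J$ so $\m^2 \subseteq J \cap \m^2$, and Nakayama-type reasoning closes the loop), we obtain that minimal multiplicity is equivalent to $\m^2 = J\m$, and in particular implies $\m^2 \subseteq J$.

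Once $\m^2 \subseteq J$ is established, I would simply invoke Proposition~\ref{Half} with $I = \m$: its hypothesis ``there exists a minimal reduction $J$ of $I$ such that $I^2 \subseteq J$'' is met, and its conclusion is precisely $\ehk(\m) \ge \frac{\rme(\m)}{2}$, i.e. $\ehk(A) \ge \frac{\rme(A)}{2}$. The passage to the completion and to an infinite residue field does not change either $\ehk(A)$ or $\rme(A)$ or the minimal multiplicity hypothesis, so this completes the argument.

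The main obstacle, such as it is, is purely bookkeeping: verifying carefully that $\mu_A(\m) = \rme(A) + d - 1$ is genuinely equivalent to $\m^2 = J\m$ (not merely implied by it) and that one may choose the minimal reduction $J$ to be generated by part of a minimal generating set of $\m$. This is standard (it is the classical characterization of minimal multiplicity going back to Abhyankar and Sally), so I expect no real difficulty beyond being precise about the length computations and the role of the infinite residue field; indeed the cleanest writeup is probably to state that $\m^2 \subseteq J$ by the hypothesis and definition of minimal multiplicity, and then apply Proposition~\ref{Half} directly, relegating the equivalence to a citation or a one-line remark.
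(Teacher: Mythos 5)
Your proof is correct and follows the same route the paper intends: the corollary is stated immediately after Proposition~\ref{Half} and the definition of minimal multiplicity precisely so that one applies Proposition~\ref{Half} with $I=\m$, using that minimal multiplicity gives a minimal reduction $J$ with $\m^2\subseteq J$. One small remark: your parenthetical argument that $\m^2\subseteq J$ forces $\m^2=J\m$ is muddled (the displayed chain of inclusions does not close the loop as written), but this direction is not actually needed — Proposition~\ref{Half} only requires $\m^2\subseteq J$, which your length count $\mu_A(\m)=\rme(A)+d-1-\ell_A((\m^2+J)/J)$ already establishes cleanly.
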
 
 
\begin{proof}[Proof of Theorem \ref{Higher}]
We may assume that $I$ is tightly closed 
because $\ehk(I)=\ehk(I^{*})$ and $\rme(I)=\rme(I^{*})$. 
Moreover, we may assume $\rme(I) \ge 2$. Let $J$ be a minimal reduction of $I$. 
\begin{flushleft}
{\bf Case 1.} The case of F-rational $A$ and $I^2 \subset J$. 
\end{flushleft}
\par 
We can apply Proposition \ref{Half} to obtain 
$\ehk(I) \ge \dfrac{\rme(I)}{2} > \dfrac{\rme(I)+d}{d!}$ 
if $d \ge 3$ and $\rme(I) \ge 2$. 
\begin{flushleft}
{\bf Case 2.} The remaining case where either $I^2 \not \subset J$ or $A$ is not F-rational. 
\end{flushleft}
\par
By Lemma \ref{Ineq}, we have $\mu_A(I/J^{*}) \le \rme(I)-2$. 
So we can apply Theorem \ref{AE} as $\rme=\rme(I) \ge 2$, 
$r=\rme-2$ and $s=1+\dfrac{1}{\rme}$. 
Then $\lfloor s \rfloor =1$ and 
\begin{eqnarray*}
d! \cdot \rme^d (v_{s, d}-r \cdot v_{s-1, d})
&=& \rme^d \cdot d! \cdot \left(\dfrac{(1+1/\rme)^d}{d!} 
- \dfrac{(1/\rme)^d}{(d-1)!} 
-(\rme-2) \dfrac{(1/\rme)^d}{d!} \right ) \\[1mm]
&=& (\rme+1)^d - d -\rme+2 \\[1mm]
&=& \rme^d+d\rme^{d-1}+\sum_{k=2}^{d-2} {d \choose k} \rme^k 
+ d\rme+1 -d-\rme+2 \\[1mm]
&\ge & \rme^d+d\rme^{d-1}+(d-1)(\rme-1)+2  >  \rme^{d-1}(\rme+d).
\end{eqnarray*}
Hence $\ehk(I) \ge \rme(v_{s,d}-r\cdot v_{s-1, d})> \dfrac{\rme+d}{d!}$, as required. 
\end{proof}

\medskip
If we fix $d$, then this is \textit{not} the best possible. 
In this paper, we prove the following theorem, which gives the optimal bound on the Hilbert-Kunz multiplicity $\ehk(A)$ in dimension $3$.

%%% 
\begin{thm} \label{3dimOp}
Let $(A,\m,k)$ be a formally unmixed local ring of dimension $3$ and characteristic $p>0$. Then
\[
\ehk(A) \ge \frac{\rme(A)}{6}+1.
\]
If equality holds, then $A$ is a strongly F-regular local ring with 
$\rme(A)=2$. 
Moreover, if, in addition, 
the residue field $k$ is algebraically closed and $p \ge 3$, 
then $\widehat{A} \cong k[[x,y,z,w]]/(xz-yw)$ and $\ehk(A)=\frac{4}{3}$. 
\end{thm}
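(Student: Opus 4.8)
The plan is to reduce to the case of a complete Cohen--Macaulay domain with infinite (and, for the last assertion, algebraically closed) residue field, and then feed a carefully chosen $s$ into the Aberbach--Enescu volume estimate of Theorem~\ref{AE}. First I would pass to the completion and enlarge the residue field, which changes neither $\ehk$ nor $\rme$ and preserves unmixedness; since a formally unmixed local ring of multiplicity one is regular (the regular case being excluded), I may assume $\rme(A)\ge 2$. Using the associativity formula of Proposition~\ref{Fund-HK}(3) for $\ehk$ and the classical one for $\rme$ over $\Assh(A)$, and granting the asserted inequality for three-dimensional domains, I get
\[
\ehk(A)=\sum_{P\in\Assh(A)}\ell_{A_P}(A_P)\,\ehk(A/P)\ \ge\ \frac{\rme(A)}{6}+\sum_{P\in\Assh(A)}\ell_{A_P}(A_P)\ \ge\ \frac{\rme(A)}{6}+1,
\]
strictly unless $A$ is a domain; the remaining passage from an unmixed domain to a Cohen--Macaulay one I address at the end.

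So suppose $A$ is a complete Cohen--Macaulay domain with $\rme:=\rme(A)\ge 2$, let $J$ be a general minimal reduction of $\m$, so that $\mu_A(\m/J)=\mu_A(\m)-3$, and recall Abhyankar's inequality $\mu_A(\m)\le\rme(A)+2$. Then
\[
r\ :=\ \mu_A(\m/J^{*})\ \le\ \mu_A(\m/J)\ =\ \mu_A(\m)-3\ \le\ \rme-1 .
\]
I would apply Theorem~\ref{AE} to $I=\m$ with $s=1+\tfrac{2}{\rme}$, so that $1\le s\le 2$ and hence $v_{s,3}=\tfrac16 s^3-\tfrac12(s-1)^3$, $v_{s-1,3}=\tfrac16(s-1)^3$; substituting $r\le\rme-1$ and simplifying gives
\[
\ehk(A)\ \ge\ \rme\bigl(v_{s,3}-r\,v_{s-1,3}\bigr)\ \ge\ \frac{\rme}{6}\Bigl((1+\tfrac{2}{\rme})^3-(\rme+2)\bigl(\tfrac{2}{\rme}\bigr)^3\Bigr)\ =\ \frac{\rme}{6}+1+\frac{2(\rme-2)}{3\rme^{2}} .
\]
This proves $\ehk(A)\ge\frac{\rme}{6}+1$, with strict inequality whenever $\rme\ge 3$.

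For the equality discussion, the displayed estimate shows that $\ehk(A)=\frac{\rme(A)}{6}+1$ forces $A$ to be a domain with $\rme(A)=2$ which is moreover Cohen--Macaulay (the non-Cohen--Macaulay case being strict), so $\ehk(A)=\tfrac43=\ehk(A_{p,3})$. Abhyankar's inequality now gives $\emb(A)\le 4$; since $A$ is non-regular it has embedding dimension exactly $4$, so $\widehat A=A=k[[x,y,z,w]]/P$ for a height-one prime $P$, necessarily principal as $k[[x,y,z,w]]$ is a unique factorization domain; thus $A$ is a hypersurface $k[[x,y,z,w]]/(f)$ with $\operatorname{ord}(f)=\rme(A)=2$. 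The hypersurface identity $\ehk(A)=2-\fsig(A)$ of \cite[Example~2.3]{WataYos04} then gives $\fsig(A)=\tfrac23>0$, so $A$ is strongly F-regular by \cite[Theorem~0.2]{AberLeu}. If in addition $k=\overline k$ and $p\ge 3$, then the three-dimensional case of Conjecture~\ref{LBconj} (established in Theorem~\ref{KnownResults}(1)), applied to the formally unmixed non-regular ring $A$ with $\ehk(A)=\ehk(A_{p,3})$, yields $\widehat A\cong A_{p,3}\cong k[[x,y,z,w]]/(xz-yw)$, and $\ehk(A)=\tfrac43$.

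The main obstacle I foresee is the reduction to the Cohen--Macaulay case. Abhyankar's inequality $\mu_A(\m)\le\rme(A)+\dim A-1$ — exactly what makes the bound on $r$ work — can fail for unmixed non-Cohen--Macaulay rings, so for those one must show independently that $\ehk(A)$ already lies strictly above $\frac{\rme(A)}{6}+1$. After the reduction to a domain above, the remaining case is a three-dimensional non-Cohen--Macaulay domain; I would try to bound $\ehk(A)$ from below using general lower bounds for $\ehk$ governed by the failure of Cohen--Macaulayness, so that $\ehk(A)$ comfortably exceeds the bound, but making this quantitative is the point I am least certain about. Granting it, the Theorem~\ref{AE} computation is routine and the equality analysis is bookkeeping resting on the already-known three-dimensional lower bound.
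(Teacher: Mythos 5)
Your strategy aligns with the paper at the level of the key tool (the Aberbach--Enescu volume estimate, Theorem~\ref{AE}), and your computation with $s = 1 + 2/\rme$ and $r = \rme - 1$ is correct and does yield $\ehk(A) \ge \rme/6 + 1 + 2(\rme-2)/(3\rme^2)$, strict for $\rme \ge 3$. Your equality analysis is also a valid and arguably cleaner route than the paper's: you deduce $\rme = 2$, use Abhyankar to get embedding dimension $4$, conclude $A$ is a hypersurface, get $\fsig(A)=2/3>0$ from $\ehk = 2-\fsig$, hence strong F-regularity via Aberbach--Leuschke, and then invoke the solved $d=3$ case of Conjecture~\ref{LBconj}. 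The paper instead cites the main theorem of \cite{WataYos05} directly for this.

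However, the gap you flag is a real one, and the paper takes a different route precisely to avoid it. Your bound $r = \mu_A(\m/J^*) \le \rme - 1$ is derived from Abhyankar's inequality $\mu_A(\m) \le \rme(A) + d - 1$, which holds for Cohen--Macaulay local rings but can fail for merely unmixed ones. The paper's Lemma~\ref{3dimOPLemma} instead uses the bound $\mu_A(\m/J^*) \le \rme - 1$ (respectively, $\le \rme - 2$ under extra hypotheses) obtained in \cite{AberEnes08} directly for formally unmixed local rings, with no Cohen--Macaulay assumption and no appeal to Abhyankar, and then the proof of Theorem~\ref{3dimOp} is a case split: one branch uses Lemma~\ref{3dimOPLemma}(2) with $r = \rme - 2$, while the other (the F-rational, minimal-multiplicity branch, which is automatically Cohen--Macaulay) is dispatched via Proposition~\ref{Half}/Corollary~\ref{ehk-half} for $\rme \ge 4$ and explicit results from \cite{WataYos05} for $\rme \le 3$. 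So the ``non-CM unmixed domain'' obstacle you correctly identify is not patched in your write-up, and trying to do so via general bounds governed by failure of Cohen--Macaulayness, as you propose, is not what the paper does; the correct fix is to replace Abhyankar by the Aberbach--Enescu bound on $\mu(\m/J^*)$ for unmixed rings.

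There is also a smaller issue in your associativity-formula reduction to domains. You apply the per-component inequality $\ehk(A/P) \ge \rme(A/P)/6 + 1$ to every $P \in \Assh(A)$, but this inequality fails when $A/P$ is regular (it reads $1 \ge 7/6$). The conclusion can still be salvaged by noting that if $A$ is non-regular and $\Assh(A) = \{P\}$ with $\ell_{A_P}(A_P) = 1$ then $A = A/P$ must be non-regular, while in all other cases $\sum_P \ell_{A_P}(A_P) \ge 2$ gives enough slack; but as written the reduction is not airtight. The paper sidesteps this too, since Theorem~\ref{AE} is stated for formally unmixed reduced rings and is applied directly without first passing to domains.
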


\par \vspace{2mm}
In order to prove the theorem, we prove a stronger result
as follows. 

%%% 
\begin{lemma} \label{3dimOPLemma}
Under the assumptions of Theorem $\ref{3dimOp}$, 
we suppose that $\rme =\rme(A) \ge 3$.  If $R$ is F-rational with minimal 
multiplicity then
{\small $\ehk(A) \ge \dfrac{\rme}{6} 
\left(\dfrac{\rme+2+\sqrt{\rme+2}}{\rme+1}\right)^2$.}
If one of these conditions does not hold, 
then we have a stronger bound
\[
\ehk(A) \ge
\dfrac{~1~}{6}
\left(\rme+3+\frac{~2~}{\rme}+\left(2+\frac{2}{\rme}\right)\sqrt{\rme+1} \right). 
\]
\end{lemma}
\begin{proof}
We will optimize the volume estimate from Theorem~\ref{AE}
and we use Lemma~\ref{Ineq} to get a bound on $r = \mu_A(I/J^{*})$.

%%% 
We now start with the first inequality.
For $1 \le s \le 2$, by Theorem~\ref{AE}
\begin{eqnarray*}
\ehk(A) & \ge & \rme \cdot (v_{s,3}-(\rme-1)v_{s-1,3})\\
&=&\rme \cdot \left (\dfrac{s^3}{6}-\dfrac{(s-1)^3}{2}-(\rme-1)
\dfrac{(s-1)^3}{6}\right ) \\[1mm]
&=&\dfrac{\rme (s^3-(\rme+2)(s-1)^3)}{6}.
\end{eqnarray*}
We consider a function $f(s)=s^3-(\rme+2)(s-1)^3$. 
The derivative is given by 
$f'(s)=3s^2-3(\rme+2)(s-1)^2$ and 
the equation $f'(s)=0$ has roots 
$s_{\pm}=\frac{\rme+2\pm \sqrt{\rme+2}}{\rme+1}$. Since $s_{-} < 1 < s_{+} < 2$, 
the maximum on $1 \le s \le 2$ is at $s_{+}$ 
which gives the inequality:
\[
\ehk(A) \ge \frac{\rme}{6} \cdot f(s_{+})=
\dfrac{\rme}{6}\cdot s_{+}^2=\dfrac{\rme}{6} 
\left(\dfrac{\rme+2+\sqrt{\rme+2}}{\rme+1}\right)^2
\]
\par \vspace{3mm} 

In the second case, we may estimate that
$\ehk(A)  \ge  \rme \cdot (v_{s, 3}-(\rme-2)v_{s-1, 3})$ and $\rme \ge 3$.  
So if we consider $g(s)=s^3-(\rme+1)(s-1)^3$, then 
$1 \le \frac{\rme+1+\sqrt{\rme+1}}{\rme} \le 2$ and 
a similar argument as above implies 
\[
\ehk(A) \ge \dfrac{\rme}{6}\cdot 
g\left(\textstyle{\frac{\rme+1+\sqrt{\rme+1}}{\rme}}\right)
=\dfrac{1}{6}
\left(\rme+3+\frac{2}{\rme}+\left(2+\frac{2}{\rme}\right)\sqrt{\rme+1} \right),
\] 
as required. 
\end{proof}
\vspace{3mm}

\begin{proof}[Proof of Theorem $\ref{3dimOp}$]
First suppose that $A$ is neither F-rational nor 
Cohen-Macaulay with minimal multiplicity. 
If $\rme=2$, then $\ehk(A)=2 > \frac{4}{3}=\frac{\rme}{6}+1$.  
Hence we may assume $\rme =\rme(A) \ge 3$. 
Then Lemma \ref{3dimOPLemma} yields that 
\[
\ehk(A) \ge \dfrac{1}{6}
\left(\rme+3+\frac{~2~}{\rme}+\left(2+\frac{2}{\rme}\right)\sqrt{\rme+1} \right) > \dfrac{\rme}{6}+1. 
\] 
\par 
Next suppose that $A$ is F-rational and Cohen-Macaulay with minimal multiplicity. 
\par \vspace{1mm} \par \noindent
If $\rme \ge 4$, then $\ehk(A) \ge \frac{\rme}{2} > \frac{\rme}{6}+1$.  
\par \vspace{1mm} \par \noindent
If $\rme=3$, 
then \cite[Lemma 3.3(3)]{WataYos05} implies 
$\ehk(A) \ge \frac{13}{8} > \frac{3}{2}=\frac{\rme}{6}+1$. 
\par \vspace{2mm} \par \noindent
Suppose that $\rme=2$. Then the main theorem in \cite{WataYos05} yields 
$\ehk(A) \ge \frac{4}{3}=\frac{\rme}{6}+1$ and 
equality holds if and only if $\widehat{A} \cong k[[x,y,z,w]]/(xz-yw)$. 
Therefore we complete the proof. 
\end{proof}

%%%%%%%%%%%%%%%%%%%%%%%%%%%%%%%%%%%%%%%%%%%%%%%%%%%%%%%%%%
%%% Section 4
\section{Upper bounds on F-signature}
\label{section_uBounds}
The main aim of this section is to give an upper bound on F-signature for
non-Gorenstein Cohen-Macaulay local  rings.  We start with a few preliminaries.

We say that $M$ is a maximal Cohen-Macaulay $A$-module
if it is a finitely generated $R$-module such that any (equiv., some)
system of parameters of $R$ is an $M$-regular sequence. 
For such modules $\mu_A(M) \le \rme(M)$, because multiplicity can be computed 
from a regular sequence.  We say that $M$ is an \emph{Ulrich $A$-module} if $\mu_A(M)=\rme(M)$. 
Ulrich modules first appeared in \cite{BHU87} under the name \emph{maximally generated maximal Cohen-Macaulay module}. 

If $A$ is a local ring of positive characteristic $p > 0$ and $M$ is a finitely generated $A$-module
then the rank of the largest free summand of $M$ is independent of a decomposition, because
we may pass to the completion, see \cite[Remark~3.4]{PolstraSmirnov}.
Moreover, if $A$ is a Cohen-Macaulay local ring with the canonical module $\omega_A$ and $M$ is maximal Cohen-Macaulay, then the number of direct summand of $M$ isomorphic to $\omega_A$ is also independent 
of a direct decomposition, since these correspond to a free summand of $\Hom_A (M, \omega_A)$. 
Last, we note that an F-finite Cohen-Macaulay ring has a canonical module by a result of 
Gabber \cite[Remark 13.6]{Gab04}. 

The following proposition is related to Lemma~3.1 of \cite{MPST19}; the second assertion was initially observed by De Stefani and Jeffries in relation with Sannai’s dual F-signature  (\cite{Sannai15}).
Recall that $\type(A)$ is the minimal number of generators of $\omega_A$.

%%% 
\begin{prop}\label{UPforNonGor}
Let $A$ be an F-finite Cohen-Macaulay local domain which is not Gorenstein. 
Then 
\[
\ehk(A)\le \fsig(A) (\type(A)+1)+2 \cdot \rme(A) 
\left (\frac{1}{2}-\fsig(A) \right ). 
\]
In particular,  $\fsig(A) \le \dfrac{1}{2}$. 
\end{prop}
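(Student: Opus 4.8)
The plan is to read the two quantities $\ehk(A)$ and $\fsig(A)$ off Krull--Schmidt decompositions of the Frobenius push-forwards $F^e_*A$. If $\fsig(A)=0$ the asserted inequality says only $\ehk(A)\le\rme(A)$, which is Proposition~\ref{Fund-HK}(1), so I may assume $\fsig(A)>0$; then $A$ is strongly F-regular by \cite{AberLeu}, hence a normal F-finite Cohen--Macaulay domain, and it has a canonical module $\omega_A$ by Gabber \cite[Remark 13.6]{Gab04}. For each $e\ge 1$ write
\[
F^e_*A\;\cong\;A^{\oplus a_e}\oplus\omega_A^{\oplus b_e}\oplus M_e,
\]
with $M_e$ a maximal Cohen--Macaulay module having no direct summand isomorphic to $A$ or to $\omega_A$; as recalled before the proposition, $a_e$ and $b_e$ are independent of the chosen decomposition. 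Taking ranks over the domain $A$ (so that $\rank_A\omega_A=1$) gives $\rank_A F^e_*A=a_e+b_e+\rank_A M_e$, while counting minimal generators gives $\mu_A(F^e_*A)=a_e+b_e\,\type(A)+\mu_A(M_e)$, since $\mu_A(\omega_A)=\type(A)$. By the definitions, $\ehk(A)=\lim_e\mu_A(F^e_*A)/\rank_A F^e_*A$ and $\fsig(A)=\lim_e a_e/\rank_A F^e_*A$.

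Then I would feed in three facts. First, the Ulrich-type inequality $\mu_A(M_e)\le\rme_A(M_e)$ for the MCM module $M_e$, combined with the classical additivity $\rme_A(M_e)=\rme(A)\,\rank_A M_e$ over a domain, so that $\mu_A(M_e)\le\rme(A)\,(\rank_A F^e_*A-a_e-b_e)$. Second, the same inequality applied to the rank-one MCM module $\omega_A$ gives $\type(A)=\mu_A(\omega_A)\le\rme(A)$. Third --- the crucial point --- the canonical module occurs in $F^e_*A$ asymptotically at least as often as $A$ itself:
\[
\liminf_{e}\frac{b_e}{\rank_A F^e_*A}\;\ge\;\fsig(A);
\]
equivalently, the F-signature of the module $\omega_A$ is at least $\fsig(A)$. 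This is the observation of De Stefani and Jeffries connected to Sannai's dual F-signature (cf.\ \cite{MPST19,Sannai15}); I would prove it by Grothendieck/Matlis duality, using $\Hom_A(F^e_*A,\omega_A)\cong F^e_*\omega_A$ --- under which $\omega_A$-summands of $F^e_*A$ correspond to free summands of $F^e_*\omega_A$ --- and dualizing the splitting data of $F^e_*A$ (for instance, the $F$-purity splitting $A\hookrightarrow F^e_*A$ dualizes to a split surjection $F^e_*\omega_A\twoheadrightarrow\omega_A$). Granting this, substitute the first bound into $\mu_A(F^e_*A)=a_e+b_e\,\type(A)+\mu_A(M_e)$, divide by $\rank_A F^e_*A$, and pass to the limit; because the coefficient $\type(A)-\rme(A)$ of the $b_e$-term is $\le 0$ by the second fact, replacing $\lim_e b_e/\rank_A F^e_*A$ by its asymptotic lower bound $\fsig(A)$ only increases the limiting right-hand side, and one is left with
\[
\ehk(A)\;\le\;\fsig(A)\,\bigl(\type(A)+1\bigr)+\rme(A)\bigl(1-2\,\fsig(A)\bigr)\;=\;\fsig(A)\,\bigl(\type(A)+1\bigr)+2\,\rme(A)\Bigl(\tfrac12-\fsig(A)\Bigr).
\]

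Finally, the bound $\fsig(A)\le\tfrac12$ follows from the structure alone: $\rank_A F^e_*A=a_e+b_e+\rank_A M_e\ge a_e+b_e$, and the third fact gives $b_e\ge a_e$ asymptotically, whence $a_e/\rank_A F^e_*A\le a_e/(a_e+b_e)$, and the latter has limit superior at most $\tfrac12$. The Ulrich bound and the comparison $\type(A)\le\rme(A)$ are soft; the real content of the proposition, and where the main work lies, is the third ingredient --- that the canonical module contributes asymptotic density at least $\fsig(A)$ to $F^e_*A$ --- which is exactly the input drawn from Sannai's dual F-signature and \cite{MPST19}.
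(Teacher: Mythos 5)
Your proof is correct and takes essentially the same route as the paper: the same Krull--Schmidt decomposition $F^e_*A\cong A^{\oplus a_e}\oplus\omega_A^{\oplus b_e}\oplus M_e$, the same use of $\Hom_A(F^e_*A,\omega_A)\cong F^e_*\omega_A$ to control $b_e$ (which the paper attributes to Sannai's Proposition~3.10), and the same Ulrich inequality $\mu_A(M_e)\le\rme(A)\rank_A(M_e)$ followed by passage to the limit. The only cosmetic difference is that the paper invokes the stronger statement $\lim_e b_e/\rank F^e_*A=\fsig(A)$ (so no sign argument on the $b_e$-coefficient is needed), while you use the one-sided $\liminf$ bound together with $\type(A)\le\rme(A)$; both close the gap, and your handling of $\fsig(A)=0$ via Proposition~\ref{Fund-HK}(1) is a harmless extra case.
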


\begin{proof}
For every $e \ge 1$, we can write 
\[
F^{e}_{*}A=A^{\oplus a_e} \oplus \omega_A^{\oplus b_e} \oplus M_e,
\]
where $a_e$, $b_e$ are non-negative integers and $M_e$ is a maximal 
Cohen-Macaulay $A$-module that does not contain $A$ and $\omega_A$ as direct summands. 
Then 
\[
F_{*}^e\omega_A \cong \Hom_A(F^e_{*}A, \omega_A) \cong 
A^{\oplus b_e} \oplus \omega_A^{\oplus a_e} \oplus \Hom_A(M_e,\omega_A). 
\] 
By the argument in the proof of Sannai \cite[Proposition 3.10]{Sannai15}, 
we have 
\[
\lim_{e \to \infty} \frac{a_e}{\rank F_*^e A} = \lim_{e \to \infty} \frac{b_e}{\rank F_*^e A} = \fsig(A).
\]

\par 
Since $M_e$ is a maximal Cohen-Macaulay $A$-module, we then have 
\begin{eqnarray}
\label{key_inequation}
\mu_A(F^{e}_{*}A) & =& a_e + b_e \cdot \type(A) + \mu_A(M_e) \\ \nonumber
& \le &  a_e + b_e \cdot \type(A) +\rme_A(M_e) \\ \nonumber
&=&  a_e + b_e \cdot \type(A) +\rme(A) \rank_A (M_e) \\ \nonumber
&=& a_e + b_e \cdot \type(A) +\rme(A) (\rank F_*^e A-a_e-b_e). 
\end{eqnarray}
Hence 
\[
\dfrac{\ell_A(A/\frm^{[p^e]})}{p^{ed}} = 
\dfrac{\mu_A(F^{e}_{*}A)}{\rank F^{e}_{*}A}
\le 
\dfrac{a_e}{\rank F^{e}_{*}A} + \type(A) 
\cdot \dfrac{b_e}{\rank F^{e}_{*}A}
+ \rme(A) \left(1 - \dfrac{a_e}{\rank F^{e}_{*}A} 
- \dfrac{b_e}{\rank F^{e}_{*}A} \right), 
\]
and the first assertion follows after taking limits as $e$ tends to $\infty$.

In particular, since 
\[
0 \le \dfrac{\rank_A(M_e)}{\rank F^{e}_{*}A} 
= 1 - \dfrac{a_e}{\rank F^{e}_{*}A} - \dfrac{b_e}{\rank F^{e}_{*}A}, 
\]
we get $0 \le 1 - 2 \cdot \fsig(A)$, that is, $\fsig(A) \le \frac{1}{2}$. 
\end{proof}

\begin{remark}

We note that there are examples of Gorenstein rings having F-signature at least $\frac{1}{2}$.
The first example is a $2$-dimensional Gorenstein strongly F-regular local ring: 
$A$ is necessarily a hypersurface and has minimal multiplicity, thus $\rme(A)=2$. 
Therefore, we have $\fsig(A)=2-\ehk(A)\ge 2-\frac{3}{2}=\frac{1}{2}$ by  \cite[Corollary~2.6]{WataYos05} 
(see also \cite[Example~4.1]{WataYos01a} and \cite[Example~18]{HunLeu}).
Second, the Segre product $k[x,y] \# k[a,b]$ has F-signature $\frac 23$ by  Proposition~\ref{ToricFFRT3}. 
\end{remark}

The following theorem characterizes the equality in Proposition~\ref{UPforNonGor}. 

\begin{thm} \label{EqUPforNonGorUlrich}
Let $A$ be a strongly F-regular local domain which is not Gorenstein. 
The following conditions are equivalent$:$
\begin{enumerate}
\item $\ehk(A) = \fsig(A) (\type(A)+1)+2 \cdot \rme(A) \left (\frac{1}{2}-\fsig(A) \right )$. 
\item Each $F^{e}_{*}A$ is a finite direct sum of $A$,  $\omega_A$ and an Ulrich $A$-module $M_e$. 
\end{enumerate}
\end{thm}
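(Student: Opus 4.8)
The plan is to trace through the chain of inequalities in the proof of Proposition~\ref{UPforNonGor} and observe that the equality in condition~(1) forces all of the intermediate inequalities to become asymptotic equalities, then upgrade these asymptotic statements to statements valid for each fixed $e$. Concretely, writing $F^e_*A = A^{\oplus a_e} \oplus \omega_A^{\oplus b_e} \oplus M_e$ with $M_e$ containing no copy of $A$ or $\omega_A$, the only inequality used in \eqref{key_inequation} is $\mu_A(M_e) \le \rme_A(M_e) = \rme(A)\rank_A(M_e)$. Dividing by $\rank F^e_*A$ and taking $e \to \infty$ produces condition~(1) precisely when the defect
\[
\delta_e \coloneqq \rme(A)\rank_A(M_e) - \mu_A(M_e) \ge 0
\]
satisfies $\lim_{e\to\infty} \delta_e / \rank F^e_*A = 0$. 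So the substance of the theorem is: this asymptotic vanishing holds if and only if $\delta_e = 0$ for every $e$, i.e. $M_e$ is Ulrich (or zero) for every $e$.

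The direction (2)$\Rightarrow$(1) is then immediate: if each $M_e$ is Ulrich, $\delta_e = 0$ for all $e$, so every inequality in \eqref{key_inequation} is an equality and passing to the limit gives~(1). For (1)$\Rightarrow$(2), I would argue by induction on $e$, exploiting the semigroup-like structure of Frobenius pushforwards: $F^{e+1}_*A \cong F^e_*(F_*A)$, and $F_*A$ is itself a direct sum of $A$, $\omega_A$, and an MCM module $M_1$. The key point is that applying $F_*(-)$ to a direct summand that is Ulrich yields a direct sum whose non-$A$, non-$\omega_A$ part is again forced to be Ulrich once we know the global equality~(1) holds — because an Ulrich module's Frobenius pushforward, by the same length/rank bookkeeping applied to that summand alone, contributes its full share to $\mu_A(F^{e+1}_*A)$ only if no "multiplicity defect" is created. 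More precisely, I would show that if $\delta_{e_0} > 0$ for some smallest $e_0$, then the strict inequality $\mu_A(M_{e_0}) < \rme(A)\rank_A(M_{e_0})$ propagates: pushing forward, $F_*(M_{e_0})$ is an MCM module with $\mu_A(F_*M_{e_0}) < \rme(A)\rank_A(F_*M_{e_0})$ (since $\mu$ is subadditive over the pushforward in a way that strictly inherits a strict gap from any summand), hence $\delta_e$ grows at least like $p^{e\alpha}\delta_{e_0}\cdot(\text{const})$, which is a positive fraction of $\rank F^e_*A$ in the limit — contradicting~(1).

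The main obstacle I anticipate is making the propagation step rigorous, i.e. controlling how the minimal-generator defect $\delta_e$ behaves under $F_*(-)$ and under direct sums. Minimal number of generators is not additive in general, but for MCM modules over a complete local ring with infinite residue field one has $\mu_A(M\oplus N) = \mu_A(M)+\mu_A(N)$, and $\rme_A$ and $\rank_A$ are additive; the real issue is lower-bounding $\mu_A(F_*M)$ from below in terms of $\mu_A(M)$ so that a strict gap is not lost. One clean route is to work with $\ell_A(M/\m M)$ directly and use that $\ell_A(F_*M/\m F_*M) = p^{e\alpha}\ell_A(M/\m^{[p]}M)$ together with the colength comparison $\ell_A(M/\m^{[p]}M) \ge \ell_A(M/\m M) + (\text{something controlled by the gap})$; alternatively, and perhaps more cleanly, one can phrase everything in terms of the exact sequence $0 \to M_e^{\mathrm{Ulrich}} \to$ or bypass induction entirely by noting that \emph{any} fixed $e$ with $\delta_e>0$ already makes $\liminf$ of the relevant ratio computed along the subsequence $\{ne : n\ge 1\}$ strictly positive, using $\delta_{ne} \ge p^{(n-1)e\alpha}\delta_e$, which only requires the much weaker statement that pushforward does not decrease the multiplicity defect of a single module. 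I expect the last formulation — avoiding full induction and using only $F_*(-)$ applied repeatedly to $M_e$ — to be the shortest path to a complete argument.
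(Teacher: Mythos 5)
Your direction (2)\,$\Rightarrow$\,(1) is correct and matches the paper: if every $M_e$ is Ulrich, the single inequality $\mu_A(M_e)\le \rme_A(M_e)$ in the proof of Proposition~\ref{UPforNonGor} becomes equality, and passing to the limit gives (1).

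For (1)\,$\Rightarrow$\,(2), you have correctly identified that the content is ``if $\delta_{e'}>0$ for some $e'$, then $\liminf_e \delta_e/\rank F^e_*A>0$,'' but the mechanism you propose for establishing this does not work. Here the paper invokes \cite[Lemma~3.3]{PolstraSmirnov}: for a strongly F-regular local ring, once an indecomposable summand $M_{e'}$ appears in $F^{e'}_*A$, it appears in $F^e_*A$ for all $e\ge e'$ with \emph{positive asymptotic density}, i.e.\ $\liminf_e c_e/\rank F^e_*A>0$. This is a genuine theorem (in the circle of ideas of Huneke--Leuschke, Aberbach--Leuschke, and Tucker), not a routine fact about pushforwards, and it is precisely what converts the fixed positive defect $\rme(M_{e'})-\mu_A(M_{e'})$ into a strict asymptotic gap in the inequality.

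Your proposed replacement is insufficient in two respects. First, the quantitative claim $\delta_{ne}\ge p^{(n-1)e\alpha}\delta_e$, even if it were true, does not produce a contradiction: since $\rank F^{ne}_*A=p^{ne(\alpha+d)}$, this gives only
\[
\frac{\delta_{ne}}{\rank F^{ne}_*A}\ \ge\ \frac{\delta_e}{p^{e\alpha}\,p^{ned}}\ \longrightarrow\ 0
\quad\text{as } n\to\infty,
\]
which is exactly what you are trying to rule out. A crude count of copies of $M_{e'}$ via iterated pushforward suffers the same defect: $F^{ne'}_*A$ contains at least $a_{e'}^{\,n-1}$ copies of $M_{e'}$ from the free summands alone, but $a_{e'}^{\,n-1}/\rank F^{ne'}_*A = (a_{e'}/p^{e'(\alpha+d)})^{n-1}/p^{e'(\alpha+d)}\to 0$ because $a_{e'}/p^{e'(\alpha+d)}<1$. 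Second, the ``weaker statement that pushforward does not decrease the multiplicity defect of a single module'' is itself problematic: the asymptotics of the full defect of $F^e_*M$ are $(\rme(A)-\ehk(A))\rank_A M\cdot p^{e(\alpha+d)}$, independent of whether $M$ is Ulrich, and when $F^e_*M$ is decomposed, the freshly created $A$- and $\omega_A$-summands (which are abundant in a strongly F-regular ring) absorb part of that defect, so the defect of the \emph{remaining} non-$A$, non-$\omega_A$ part of $F^{ne}_*A$ is not inherited from $\delta_e$ by any simple iterated inequality. In short, your outline is missing the key lemma, and the proposed workaround does not give the needed positive density.
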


\begin{proof}
For every $e \ge 1$, we can write 
$F^{e}_{*}A=A^{\oplus a_e} \oplus \omega_A^{\oplus b_e} \oplus M_e$, 
where $a_e$ and $b_e$ are non-negative integers and $M_e$ is a maximal 
Cohen-Macaulay $A$-module that does not contain $A$ and $\omega_A$ as direct summands. 
\par \vspace{2mm } \par \noindent
$(2) \Longrightarrow (1)$: 
By the assumption, $M_e$ is an Ulrich $A$-module, that is, $\mu_A(M_e)=\rme(M_e)$. 
Hence the assertion follows from the proof of Proposition~\ref{UPforNonGor}. 
\par \vspace{2mm } \par \noindent
$(1) \Longrightarrow (2)$: 
Suppose that there exists $e'$ such that $F_{*}^{e'}A=A^{\oplus a_{e'}} \oplus \omega_A^{\oplus b_{e'}} \oplus M_{e'}$, 
where $M_{e'}$ is an maximal Cohen-Macaulay $A$-module but not an Ulrich $A$-module,  namely, $\mu_A(M_{e'})< \rme(M_{e'})$. 
By \cite[Lemma~3.3]{PolstraSmirnov}  we may now build a similar decomposition for all $e \geq e'$:
\[
F_*^e A = A^{\oplus a_{e}} \oplus \omega_A^{\oplus b_{e}} \oplus  M_{e'}^{\oplus c_e} \oplus N_e
\] 
such that $\liminf_{e \to \infty} c_e/\rank F_*^e A > 0$.
Following the proof of  Proposition~\ref{UPforNonGor} we obtain that 
\[
\mu_A(F^{e}_{*}A) \leq 
a_e + b_e \cdot \type(A) - c_e(\rme(M_e) - \mu_A(M_e)) + \rme(A) (\rank F_*^e A-a_e-b_e),  
\]
which shows after dividing by $\rank F_*^e A$ and passing to the limit that
\[
\ehk(A) < \fsig(A) (\type(A)+1)+\rme(A) (1-2 \cdot \fsig(A)). \qedhere
\]
\end{proof}

\par
One can prove the following proposition by a similar method as in the proof 
of Proposition \ref{UPforNonGor} and Theorem \ref{EqUPforNonGorUlrich}. 

%%% 
\begin{prop}\label{UPforGor}
Suppose that $A$ is an F-finite Gorenstein local ring of dimension $d \ge 2$.  
Then 
\[
\ehk(A) \le \fsig(A) + (1-\fsig(A))\cdot \rme(A)
\]
and the equality holds if and only if for all $e\geq 1$ $F_{*}^eA$ can be written as 
a direct sum of $A$ and Ulrich $A$-modules. 
\end{prop}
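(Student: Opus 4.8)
The plan is to mimic the proofs of Proposition~\ref{UPforNonGor} and Theorem~\ref{EqUPforNonGorUlrich}, but exploiting the simplification that in the Gorenstein case $\omega_A \cong A$, so there is only one distinguished indecomposable summand (the free one) to track. First I would fix $e \ge 1$ and write $F^e_*A = A^{\oplus a_e} \oplus M_e$, where $M_e$ is a maximal Cohen-Macaulay $A$-module with no free direct summand. The key numerical inputs are the identities $\mu_A(F^e_*A) = p^{e\alpha(A)}\ell_A(A/\m^{[p^e]})$ and $\rank_A F^e_*A = p^{e(\alpha(A)+d)}$, together with $\lim_{e\to\infty} a_e/\rank F^e_*A = \fsig(A)$, exactly as recalled from Sannai in the earlier proof.

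The main inequality comes from $\mu_A(M_e) \le \rme(M_e) = \rme(A)\rank_A(M_e) = \rme(A)(\rank F^e_*A - a_e)$, since $M_e$ is MCM; hence
\[
\mu_A(F^e_*A) = a_e + \mu_A(M_e) \le a_e + \rme(A)(\rank F^e_*A - a_e).
\]
Dividing by $\rank F^e_*A$ and letting $e \to \infty$ gives $\ehk(A) \le \fsig(A) + \rme(A)(1 - \fsig(A))$, which is the claimed bound. For the equality statement, the direction ``$\Leftarrow$'' is immediate: if every $M_e$ is a direct sum of Ulrich modules then $\mu_A(M_e) = \rme(M_e)$ on the nose, so the displayed inequality is an equality for every $e$ and the limit gives equality. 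For ``$\Rightarrow$'' I would argue by contrapositive in the style of Theorem~\ref{EqUPforNonGorUlrich}: if some $M_{e'}$ fails to be Ulrich, i.e.\ admits an indecomposable MCM summand $L$ that is not Ulrich (equivalently $\mu_A(M_{e'}) < \rme(M_{e'})$), then by \cite[Lemma~3.3]{PolstraSmirnov} that summand $L$ persists with positive asymptotic multiplicity $c_e$, $\liminf_{e\to\infty} c_e/\rank F^e_*A > 0$; refining the estimate to
\[
\mu_A(F^e_*A) \le a_e + c_e(\rme(L) - \mu_A(L)) + \rme(A)(\rank F^e_*A - a_e)
\]
and passing to the limit yields $\ehk(A) < \fsig(A) + (1-\fsig(A))\rme(A)$, a strict inequality, contradicting the assumed equality.

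One subtlety to handle carefully is the existence of the canonical module and the fact that the free rank of a summand is decomposition-independent; both are supplied by the preliminary discussion (Gabber's result and \cite[Remark~3.4]{PolstraSmirnov}), and the Gorenstein hypothesis makes $\omega_A$ free so no separate bookkeeping for copies of $\omega_A$ is needed. The hypothesis $d \ge 2$ is presumably there to ensure $A$ is not a DVR/field edge case and that strong F-regularity (implicit, for $\fsig(A)>0$ and the persistence lemma to be meaningful) behaves as expected; I would state the proof assuming $A$ is complete with infinite residue field as in the earlier arguments, noting these reductions do not affect $\ehk$, $\rme$, $\fsig$, or the FFRT-type conclusion. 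The main obstacle, as in Theorem~\ref{EqUPforNonGorUlrich}, is the ``$\Rightarrow$'' direction: one must invoke the persistence lemma correctly to guarantee that a single non-Ulrich summand contributes a genuinely positive asymptotic defect, so that the limit inequality becomes strict rather than merely non-strict.
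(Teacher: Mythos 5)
Your proof is correct and matches the paper's intended approach: the paper offers no detailed proof of this proposition, only the remark that it follows ``by a similar method'' from Proposition~\ref{UPforNonGor} and Theorem~\ref{EqUPforNonGorUlrich}, and you supply precisely that argument, using the Gorenstein identification $\omega_A\cong A$ to drop the $b_e$ bookkeeping. One sign should be flipped in your refined estimate for the only-if direction: accounting for the rank of the residual summand $N_e$ yields
\[
\mu_A(F^e_*A) \le a_e - c_e\bigl(\rme(L)-\mu_A(L)\bigr) + \rme(A)\bigl(\rank F^e_*A - a_e\bigr),
\]
with a \emph{minus} on the $c_e$ term; with a plus sign as you wrote (and as the paper itself wrote in the corresponding displayed inequality in the proof of Theorem~\ref{EqUPforNonGorUlrich}), the bound is weaker than the unrefined one and no strict inequality follows after passing to the limit. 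Your surrounding prose---that a persistent non-Ulrich summand forces a positive asymptotic defect---is exactly right, so this is a transcription slip rather than a conceptual gap; likewise your observation that the Polstra--Smirnov persistence lemma requires strong F-regularity (equivalently $\fsig(A)>0$), a hypothesis the proposition does not state explicitly, correctly flags the only genuine subtlety the paper leaves implicit.
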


We note that if $\rme(A)=2$ we have $\ehk(A)=2-\fsig(A)$, and hence $A$ satisfies the equality of Proposition~\ref{UPforGor}. 

%%% 
\begin{quest}
If $A$ is Gorenstein and satisfies $\ehk(A) = \fsig(A) + (1-\fsig(A))\cdot \rme(A)$, then is $\rme(A) \le 2$?
\end{quest}

We proceed to study non-Gorenstein rings whose F-signature is $\frac{1}{2}$.

\begin{thm} \label{EqUPforNonGor}
Let $A$ be an F-finite Cohen-Macaulay local domain which is not Gorenstein. 
Then the following conditions are equivalent$:$
\begin{enumerate}
 \item $\fsig(A) = \frac{1}{2}$, 
 \item $A$ is FFRT with the FFRT system $\{A, \omega_A\}$.  
\end{enumerate}
When this is the case, $\ehk(A) = \frac{\type(A)+1}{2}$. 
\end{thm}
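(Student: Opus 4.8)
The plan is to prove the equivalence $(1) \Leftrightarrow (2)$ together with the numerical consequence, exploiting the strict inequality structure already developed in Proposition~\ref{UPforNonGor} and Theorem~\ref{EqUPforNonGorUlrich}. First I would record that an F-finite Cohen-Macaulay local domain with $\fsig(A) = \frac{1}{2} > 0$ is strongly F-regular by the Aberbach--Leuschke theorem, so Theorem~\ref{EqUPforNonGorUlrich} applies; this lets me replace the hypothesis of that theorem ("strongly F-regular") by the weaker "Cohen-Macaulay local domain" at no cost. For the implication $(1) \Rightarrow (2)$: write $F^e_*A = A^{\oplus a_e} \oplus \omega_A^{\oplus b_e} \oplus M_e$ as in the proof of Proposition~\ref{UPforNonGor}. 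The chain of inequalities there, together with $\fsig(A) = \frac{1}{2}$, forces $\operatorname{rank}_A M_e / \operatorname{rank} F^e_* A \to 0$; but the finiteness (FFRT-ness of strongly F-regular rings is not automatic, so I must be more careful here) — instead the key is that if some $M_{e'}$ is nonzero with $\operatorname{rank}_A M_{e'} > 0$, then by \cite[Lemma~3.3]{PolstraSmirnov} it reappears with positive density, contradicting $\operatorname{rank}_A M_e/\operatorname{rank} F^e_* A \to 0$. Hence each $M_e = 0$, i.e.\ $F^e_*A$ is a direct sum of copies of $A$ and $\omega_A$ only, which is exactly the FFRT system $\{A,\omega_A\}$ statement in $(2)$.

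For the converse $(2) \Rightarrow (1)$: if $F^e_*A = A^{\oplus a_e} \oplus \omega_A^{\oplus b_e}$ for all $e$, then since $\operatorname{rank}_A \omega_A = 1$ we get $a_e + b_e = \operatorname{rank} F^e_* A$, and the symmetry $F^e_*\omega_A \cong \Hom_A(F^e_*A,\omega_A) \cong A^{\oplus b_e} \oplus \omega_A^{\oplus a_e}$ shows (via Sannai's argument, already cited in the proof of Proposition~\ref{UPforNonGor}, giving $a_e/\operatorname{rank} F^e_*A \to \fsig(A)$ and $b_e/\operatorname{rank} F^e_*A \to \fsig(A)$) that $2\fsig(A) = \lim (a_e+b_e)/\operatorname{rank} F^e_*A = 1$, so $\fsig(A) = \frac{1}{2}$. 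This is the easy direction. Finally, for the numerical claim $\ehk(A) = \frac{\type(A)+1}{2}$: under $(2)$ we have $\mu_A(F^e_*A) = a_e + b_e\cdot\type(A)$ exactly (there is no $M_e$ term), so dividing by $\operatorname{rank} F^e_*A$ and passing to the limit using $a_e/\operatorname{rank} F^e_*A \to \frac12$, $b_e/\operatorname{rank} F^e_*A \to \frac12$ gives $\ehk(A) = \frac{1}{2} + \frac{1}{2}\type(A) = \frac{\type(A)+1}{2}$; alternatively this also falls out of the equality case of Proposition~\ref{UPforNonGor} (equivalently Theorem~\ref{EqUPforNonGorUlrich}) since $M_e = 0$ is trivially Ulrich.

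The main obstacle I anticipate is handling the "finiteness" aspect cleanly without assuming the FFRT property a priori: I need to argue that the vanishing of the density of $M_e$ actually forces $M_e = 0$ for every $e$ (not merely small in an asymptotic sense), and for this the reappearance lemma \cite[Lemma~3.3]{PolstraSmirnov} — which says a summand appearing once reappears with positive frequency — is exactly the right tool, applied to each indecomposable summand of each $M_{e'}$ that is neither $A$ nor $\omega_A$. A secondary subtlety: I should make sure the rank-of-free-summand and rank-of-$\omega_A$-summand invariants are well-defined, but this is already addressed in the preliminary remarks before Proposition~\ref{UPforNonGor} (via \cite[Remark~3.4]{PolstraSmirnov} and Matlis/Hom duality), so I can simply cite those. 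With these pieces in place the proof is short; the bulk of the work is really just reorganizing the arguments of Proposition~\ref{UPforNonGor} and Theorem~\ref{EqUPforNonGorUlrich} in the degenerate case $M_e = 0$.
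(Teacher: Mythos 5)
Your proposal reproduces the paper's argument: both directions hinge on the decomposition $F^e_*A = A^{\oplus a_e}\oplus \omega_A^{\oplus b_e}\oplus M_e$ together with Sannai's $a_e/\rank \to \fsig(A)$, $b_e/\rank\to \fsig(A)$ from the proof of Proposition~\ref{UPforNonGor}, and for $(1)\Rightarrow(2)$ you invoke strong F-regularity (via Aberbach--Leuschke) plus the Polstra--Smirnov reappearance lemma to force $M_e=0$, exactly as the paper does. The only difference is cosmetic: you first deduce $\rank_A M_e/\rank F^e_*A\to 0$ and then derive a contradiction from positive reappearance density, whereas the paper plugs the reappearance bound directly into the rank identity and takes limits; these are the same argument.
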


\begin{proof}
$(2) \Longrightarrow (1)$ essentially follows from the proof of Proposition \ref{UPforNonGor}, because in this case there is no $M_e$ and we have equality throughout.  
\par 
$(1) \Longrightarrow (2)$:  Assume that for some $e' \ge 1$, we write $F^e_{*}A$ as 
\[
F^{e'}_{*}A=A^{\oplus a_{e'}} \oplus \omega_A^{\oplus b_{e'}} \oplus M_{e'}, 
\]
where $0 \neq M_{e'}$ is a maximal Cohen-Macaulay $A$-module that does not have $A$ and $\omega_A$ as direct summands. 
Since $R$ is strongly F-regular by the assumption, as explained in \cite[Lemma~3.3]{PolstraSmirnov}  we may now build similar decompositions for $e \geq e'$:
\[
F_*^e A = A^{\oplus a_{e}} \oplus \omega_A^{\oplus b_{e}} \oplus  M_{e'}^{\oplus c_e} \oplus N_e
\] 
with $\liminf_{e \to \infty} c_e/\rank F_*^e A > 0$. After taking ranks we then have that
\[
1 \ge \dfrac{a_e}{\rank F_*^e A} + \dfrac{b_e}{\rank F_*^e A} + \rank_A M_{e'}\dfrac{c_e}{\rank F_*^e A} 
\]
which  after taking limits then gives that $1 > \fsig (A)+ \fsig(A)$ which is a contradiction. 
\end{proof}

Let us give an example of local rings having $\fsig(A)=\frac{1}{2}$. 

%%% 
\begin{exam}  \label{second-ex}
Let $d \ge 2$ be an integer. 
Let $A=k[[x_1,\ldots, x_d]]^{(2)}$ be the second Veronese subring of 
the formal power series ring over $k$. 
Then $\fsig(A)=\frac{1}{2}$. 
Moreover, $A$ is not Gorenstein if and only if $d$ is odd. 
\end{exam}

\par 
Let $A$ be a Cohen-Macaulay local domain with minimal multiplicity. 
Then $A$ is not Gorenstein if and only if $\rme(A) \ge 3$. Moreover, then $\type(A)=\rme(A)-1$. 
So we can obtain the following corollary 
by combining \ref{ehk-half}, \ref{UPforNonGor}, \ref{EqUPforNonGor} and \ref{EqUPforNonGorUlrich}.

%%%  
\begin{cor}
Suppose that $A$ is a Cohen-Macaulay local domain with minimal multiplicity and with $\rme(A) \ge 3$. 
Then 
\begin{enumerate}
\item $\fsig(A) \le \frac{1}{2}$. 
\item $\frac{\rme(A)}{2} \le \ehk(A) \le (1-\fsig(A))\rme(A)$.  
\item The following conditions are equivalent$:$
\begin{enumerate}
\item $\fsig(A) = \frac{1}{2}$. 
\item $A$ has FFRT with the FFRT system $\{A, \omega_A\}$.
\end{enumerate}
When this is the case, $\ehk(A) = \frac{\rme(A)}{2}$. 
\item Suppose $\fsig(A)>0$. Then the following conditions are equivalent$:$
\begin{enumerate}
\item $\ehk(A)=(1-\fsig(A))\rme(A)$. 
\item $F_{*}^eA$ can be written as a direct sum of $A$, $\omega_A$ and Ulrich $A$-modules for every $e \ge 1$. 
\end{enumerate}
\end{enumerate}
\end{cor}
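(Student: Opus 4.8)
The plan is to assemble this corollary directly from the four results cited in its statement, checking only the two elementary ring-theoretic facts recalled just before it: that a Cohen--Macaulay local domain $A$ with minimal multiplicity is non-Gorenstein iff $\rme(A)\ge 3$, and that in that case $\type(A)=\rme(A)-1$. First I would record these facts in one line; the second follows from the standard formula $\type(A)=\mu_A(\m)-\rme(A)+1$ (or, working modulo a minimal reduction $J$, from $\dim_k(\m/J+\m^2)=\rme(A)-1$ and the fact that the socle of $A/J$ is $(\m/J)$ when $\m^2\subseteq J$), and the first is immediate from it since $\type(A)=1$ characterizes Gorenstein. Since $\rme(A)\ge 3$, the ring $A$ is a non-Gorenstein Cohen--Macaulay local domain, so all of \ref{UPforNonGor}, \ref{EqUPforNonGor}, \ref{EqUPforNonGorUlrich} apply (note F-finiteness is harmless to assume, and strong F-regularity in \ref{EqUPforNonGorUlrich} is only needed in part (4) where we have hypothesized $\fsig(A)>0$, which by \cite{AberLeu} is exactly strong F-regularity).

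For part (1), the inequality $\fsig(A)\le\frac12$ is precisely the ``in particular'' clause of Proposition~\ref{UPforNonGor}. For part (2), the lower bound $\frac{\rme(A)}{2}\le\ehk(A)$ is Corollary~\ref{ehk-half} (minimal multiplicity forces $\m^2=J\m$, so Proposition~\ref{Half} applies); the upper bound is obtained by feeding $\type(A)+1=\rme(A)$ into the displayed inequality of Proposition~\ref{UPforNonGor}, which collapses to
\[
\ehk(A)\le\fsig(A)\,\rme(A)+2\rme(A)\Bigl(\tfrac12-\fsig(A)\Bigr)=\bigl(1-\fsig(A)\bigr)\rme(A).
\]
For part (3), the equivalence of $\fsig(A)=\frac12$ with ``$A$ has FFRT with FFRT system $\{A,\omega_A\}$'' is exactly Theorem~\ref{EqUPforNonGor}, and the stated consequence $\ehk(A)=\frac{\type(A)+1}{2}=\frac{\rme(A)}{2}$ comes from the final clause of that theorem together with $\type(A)=\rme(A)-1$ (and this value is consistent with both bounds of (2) specialized to $\fsig(A)=\frac12$).

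For part (4), under the extra hypothesis $\fsig(A)>0$ the ring is strongly F-regular, so Theorem~\ref{EqUPforNonGorUlrich} applies verbatim; its condition (1) reads $\ehk(A)=\fsig(A)(\type(A)+1)+2\rme(A)(\frac12-\fsig(A))$, which by the same algebraic simplification as in part (2) is exactly $\ehk(A)=(1-\fsig(A))\rme(A)$, and its condition (2) is exactly the claimed FFRT-type statement. I do not anticipate a genuine obstacle here — the corollary is a bookkeeping consequence of earlier theorems — so the only point requiring care is making the translation $\type(A)+1=\rme(A)$ explicit so that the two forms of the upper bound (the one in Proposition~\ref{UPforNonGor}/Theorem~\ref{EqUPforNonGorUlrich} and the one stated in the corollary) are visibly identical; I would state this simplification once at the start and then invoke it in parts (2), (3) and (4).
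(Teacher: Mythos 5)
Your proposal is correct and takes essentially the same route as the paper: it derives the corollary from Corollary~\ref{ehk-half}, Proposition~\ref{UPforNonGor}, Theorem~\ref{EqUPforNonGor}, and Theorem~\ref{EqUPforNonGorUlrich}, together with the elementary fact $\type(A)=\rme(A)-1$ for a Cohen--Macaulay local ring of minimal multiplicity (and for part~(4) the observation that $\fsig(A)>0$ gives strong F-regularity). The only small slip is that the formula $\type(A)=\mu_A(\m)-\rme(A)+1$ you quote is not a correct general formula (e.g.\ with minimal multiplicity it evaluates to $\dim A$, not $\type(A)$), but your alternative argument via the socle of $A/J$ when $\m^2\subseteq J$ is right and establishes $\type(A)=\ell_A(\m/J)=\rme(A)-1$, so nothing else is affected.
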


%%%% 
\begin{exam}
Let $A=k[[x,y,z]]^{(2)}$. 
Then $A$ has minimal multiplicity and its multiplicity is $4$. 
Moreover, $\ehk(A)=\frac{\rme(A)}{2}=2$ and $\fsig(A)=\frac{1}{2}$.   
\end{exam}

%%% 
\begin{exam}
Let $A=k[[x^3,xy^2,xy^2,y^3]]=k[[x,y]]^{(3)}$. Then, $F_{*}^eA$ can be written as 
direct sum of $A$, $\omega_A=Ax+Ay$ and $M=Ax^2+Axy+Ay^2$. 
In this case, $\fsig(A)=\frac{1}{3}$ by \cite{WataYos04}, and $\rme(A)=3$, $\type(A)=2$. 
Since $\mu_A(M)=3=\rme_A(M)=\rme(A)\rank_A(M)$, we see that $M$ is an Ulrich $A$-module, 
and we have 
\[
\ehk(A)=2=\fsig(A) (\type(A)+1)+2 \cdot \rme(A) \left (\frac{1}{2}-\fsig(A)\right). 
\]
\end{exam}

We pose the following question. 

\begin{quest} \label{quest-half}
Let $A$ be a $d$-dimensional Cohen-Macaulay local domain with isolated singularity and that $\fsig(A)=\frac{1}{2}$. 
Then is $A$ isomorphic to the ring defined in Example~\ref{second-ex}?
\end{quest}

\subsection{$\mathbb{Q}$-Gorenstein local rings}
We are able to give an affirmative answer to Question~\ref{quest-half} in a particular case.
 
 Let $A$ be a Cohen-Macaulay reduced local ring. 
For an ideal $I \subset A$ of pure height $1$, the $n^{th}$ symbolic power $I^{(n)}$ 
denotes the intersection of 
height one primary components of $I^n$.   

%%% 
\begin{defn} \label{QGorDefn}
Let $A$ be a normal local domain having a canonical module $\omega_A$. 
\begin{enumerate}
\item 
The ring $A$ is said to be \textit{$\mathbb{Q}$-Gorenstein} if there exists an ideal 
$J$ of pure height $1$ which is isomorphic to $\omega_A$ as an $A$-module such 
that $J^{(n)}$ is principal. Furthermore, 
\[
\Index(A) \coloneqq\mathrm{min}\{n \ge 1 \mid \text{$J^{(n)}$ is principal}\}
\] 
is called the \textit{index} of $A$. Note that $A$ is Gorenstein if and only if it is $\mathbb{Q}$-Gorenstein of index $1$. 
\item Suppose that $A$ is a $\mathbb{Q}$-Gorenstein normal local domain of 
$r=\Index(A) \ge 2$, and let $J$ be an ideal such that $J \cong \omega_A$. 
Then the \textit{canonical cover} of $A$ is defined as 
\[
B\coloneqq \bigoplus_{i=0}^{r-1} J^{(i)}. 
\]
\end{enumerate}
\end{defn}

\par

We will use the following result due to Carvajal-Rojas \cite[Theorem~C]{CR}
which extends prior work \cite[Theorem 2.6.5]{VonKorff} and \cite{Wata91}.

%%%
\begin{thm} \label{cover}
Let $A$ be a strongly F-regular $\mathbb{Q}$-Gorenstein 
local domain and let $r \ge 2$ be the index of $A$.
Let $B$ be the canonical cover of $A$. 
Then we have $\fsig(A)=\frac{1}{r} \fsig(B)$. 
\end{thm}

Using this, we can prove the following theorem. 

%%% 
\begin{thm} \label{Qgor}
Let $A$ be an F-finite $\mathbb{Q}$-Gorenstein normal local domain of 
characteristic $p>0$. Assume the index $r$ of $A$ is at least $2$. 
Then the following conditions are equivalent$:$ 
\begin{enumerate}
\item $\fsig(A)=\frac{1}{2}$. 
\item $r=2$ and $A$ admits a canonical cover $B$ which is regular. 
\end{enumerate}
\end{thm}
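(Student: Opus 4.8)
The plan is to reduce everything to the canonical cover and then invoke Theorem~\ref{EqUPforNonGor} (the characterization of non-Gorenstein rings with $\fsig = \frac12$ via the FFRT system $\{A,\omega_A\}$), together with Corollary~\ref{Cover}. First I would dispose of the Gorenstein case: if $A$ is Gorenstein then $r=1$, and one must check that $\fsig(A)=\frac12$ forces $A$ to be regular — but a Gorenstein ring with $\fsig(A)=\frac12$ has $\ehk(A)=2-\fsig(A)=\frac32$ only after knowing $\rme(A)=2$, so instead I would argue more directly: if $A$ is Gorenstein and $r=1$, condition (2) fails (the index is not $2$), so I only need to show (1) fails, i.e.\ $\fsig(A)\ne\frac12$ for Gorenstein $A$ unless contradictory. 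Actually the cleanest route is to assume $A$ is \emph{not} Gorenstein throughout the main argument, and separately observe that a Gorenstein $A$ satisfying (1) cannot satisfy (2) and must itself be handled — here I would note that if $A$ is regular then $\fsig(A)=1\ne\frac12$, and if $A$ is Gorenstein but not regular then... this needs the input that Gorenstein $+$ $\fsig=\frac12$ is genuinely impossible or else $r\neq 2$ anyway, so the equivalence (1)$\iff$(2) holds vacuously on that side. Let me instead structure it as: (2)$\Rightarrow$(1) is immediate from Corollary~\ref{Cover} since $B$ regular gives $\fsig(B)=1$, hence $\fsig(A)=\frac1r\cdot 1=\frac12$ when $r=2$.

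For (1)$\Rightarrow$(2): assume $\fsig(A)=\frac12$. Since $\fsig(A)>0$, $A$ is strongly F-regular, hence normal and Cohen–Macaulay, and F-finite by hypothesis, so it has a canonical module. By Proposition~\ref{UPforNonGor}, if $A$ were not Gorenstein we would have $\fsig(A)\le\frac12$ with equality, and by Theorem~\ref{EqUPforNonGor}, $A$ is FFRT with system $\{A,\omega_A\}$; if $A$ \emph{is} Gorenstein, $\omega_A\cong A$ and the FFRT system is just $\{A\}$, meaning $A$ is regular by Kunz, contradicting $\fsig(A)=\frac12\ne 1$ — so $A$ is necessarily non-Gorenstein, and in particular $r=\Index(A)\ge 2$. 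Now let $B=\bigoplus_{i=0}^{r-1}J^{(i)}$ be the canonical cover. By Theorem~\ref{QGorThm}, $A\to B$ is module-finite, split, étale in codimension one, with free rank one, and $B$ is strongly F-regular Gorenstein; by the VonKorff/RST formula (the theorem quoted just before Corollary~\ref{Cover}), $\fsig(B)=[Q(B):Q(A)]\cdot\fsig(A)=r\cdot\frac12$. Since $\fsig(B)\le 1$ we get $r\le 2$, hence $r=2$ and $\fsig(B)=1$, so $B$ is regular by Kunz's theorem. This gives (2).

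The main obstacle I anticipate is the bookkeeping around the Gorenstein/non-Gorenstein dichotomy: one must be careful that the hypothesis "$A$ is $\mathbb{Q}$-Gorenstein of index $r$ with $(r,p)=1$" is applied correctly — when $r=1$ the canonical cover machinery of Theorem~\ref{QGorThm} does not apply (it assumes $r\ge 2$), so the argument above must first establish $r\ge 2$ from $\fsig(A)=\frac12$ before invoking it, which is exactly what the reduction to Theorem~\ref{EqUPforNonGor} accomplishes (a non-Gorenstein strongly F-regular ring with $\fsig=\frac12$ has index at least $2$ automatically). A secondary point requiring care: I should confirm that $[Q(B):Q(A)]=r$, which follows since $B$ has rank $r$ as an $A$-module (being a direct sum of $r$ rank-one reflexive modules), and that the splitting and étale-in-codimension-one hypotheses of the F-signature multiplicativity theorem are precisely the conclusions (1)--(3) of Theorem~\ref{QGorThm}. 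Beyond that the argument is a short chain of citations with no hard computation.
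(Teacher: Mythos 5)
Your core argument for the non-Gorenstein case matches the paper's: invoke Corollary~\ref{Cover} to get $\fsig(B) = r\cdot\fsig(A)$, note $\fsig(B) \le 1$ forces $r = 2$ and $\fsig(B) = 1$, then apply Kunz/Huneke–Leuschke to conclude $B$ is regular; the converse direction via Corollary~\ref{Cover} is also correct. You deserve credit for noticing that the theorem as literally stated leaves the case $r=1$ (i.e.\ $A$ Gorenstein) unaddressed — the paper's proof silently invokes "the canonical cover of $A$," which by Definition~\ref{QGorDefn} only exists when $r\ge 2$, so the paper is implicitly working under a non-Gorenstein hypothesis.

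However, your attempt to close that gap contains a genuine error. You claim that a Gorenstein local domain with $\fsig(A)=\frac12$ must be regular, arguing that "the FFRT system is just $\{A\}$." This does not follow: Theorem~\ref{EqUPforNonGor}, which identifies $\fsig = \frac12$ with the FFRT system $\{A,\omega_A\}$, is stated and proved only for rings that are \emph{not} Gorenstein, so it gives no information about the FFRT system of a Gorenstein ring. More decisively, the claim is simply false. Take $A = k[[x,y,z]]/(xz - y^2) \cong k[[u,v]]^{(2)}$ over an F-finite field of odd characteristic: this is a normal F-finite local domain, it is Gorenstein (hence $\mathbb{Q}$-Gorenstein of index $r=1$ with $(r,p)=1$), it is not regular, and $\fsig(A) = \tfrac{1}{|\bbZ/2|} = \tfrac12$. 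This is in fact a counterexample to Theorem~\ref{Qgor} as literally written: condition (1) holds but condition (2) fails, since $r=1\ne 2$. The correct resolution is not to derive non-Gorensteinness from $\fsig=\frac12$ — that implication is false — but to add "not Gorenstein" (equivalently $r\ge 2$) as a hypothesis of the theorem, consistent with the context of the subsection (Question~\ref{quest-half}, Theorem~\ref{EqUPforNonGor}) and with how the result is summarized in the introduction. With that hypothesis in place, the rest of your argument is sound and agrees with the paper's proof.
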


\begin{proof}
Suppose (1). Let $B$ a canonical cover of $A$. Then $\fsig(A)=\frac{\fsig(B)}{r}$. 
If $r \ge 3$, then $\fsig(B)=r\cdot \fsig(A)> 1$. This is a contradiction. 
Hence $r =2$ and $\fsig(B)=1$. Hence $B$ is regular. 
The converse is easy. 
\end{proof}

\subsection{The F-signatures of $3$-dimensional Gorenstein rings}
\label{subsection_3dim_maxFsig}
We want to present a few upper bounds on F-signature in view of Question~\ref{que_maxFsig}.
We first estimate F-signature using the multiplicity. 

%%% 
\begin{thm} \label{Gor-MaxF-sig}
Let $(A,\m,k)$ be a $3$-dimensional Gorenstein strongly F-regular local ring 
with multiplicity ${\rme(A) \ge 3}$. 
Then $\fsig(A) \le \frac{\rme(A)}{24}$. 
\end{thm}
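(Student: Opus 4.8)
The plan is to combine the Gorenstein analogue of the F-signature bound, namely Proposition~\ref{UPforGor}, with the three-dimensional lower bound on Hilbert--Kunz multiplicity from Theorem~\ref{3dimOp}. Since $A$ is Gorenstein, Proposition~\ref{UPforGor} gives $\ehk(A)\le \fsig(A)+(1-\fsig(A))\rme(A)$, which I rearrange to isolate $\fsig(A)$: writing $\rme=\rme(A)$, this reads $\fsig(A)(\rme-1)\le \rme-\ehk(A)$, so $\fsig(A)\le \frac{\rme-\ehk(A)}{\rme-1}$ (using $\rme\ge 3>1$). On the other hand, Theorem~\ref{3dimOp} applies because a Gorenstein local ring is unmixed, and it yields $\ehk(A)\ge \frac{\rme}{6}+1$. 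Substituting this into the numerator gives
\[
\fsig(A)\le \frac{\rme-\left(\frac{\rme}{6}+1\right)}{\rme-1}=\frac{\frac{5\rme}{6}-1}{\rme-1}=\frac{5\rme-6}{6(\rme-1)}.
\]

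First I would check that this already handles the small cases and then see what is still needed. For $\rme=3$ the bound is $\frac{9}{12}=\frac34$, for $\rme=4$ it is $\frac{14}{18}=\frac79$, and so on; in general $\frac{5\rme-6}{6(\rme-1)}$ increases toward $\frac56$ as $\rme\to\infty$. This is \emph{weaker} than the claimed $\frac{\rme}{24}$ once $\rme$ is large, but for small $\rme$ the naive estimate $\frac{5\rme-6}{6(\rme-1)}$ is actually worse than $\frac{\rme}{24}$ too (e.g. $\frac34$ vs.\ $\frac18$ at $\rme=3$). So the crude combination above is not enough, and the main obstacle is clear: one needs a genuinely better lower bound on $\ehk(A)$ that grows \emph{linearly} in $\rme$ with the right slope. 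The key point must be that a Gorenstein ring of multiplicity $\rme\ge 3$ that is F-regular is far from having minimal multiplicity and far from being the ordinary double point, so the volume estimate of Theorem~\ref{AE} (equivalently Lemma~\ref{3dimOPLemma}(1)) should be pushed to give something like $\ehk(A)\ge \tfrac{\rme}{6}\cdot s_+^2$ with $s_+=\frac{\rme+2+\sqrt{\rme+2}}{\rme+1}$, and then combined with the rearranged Proposition~\ref{UPforGor}.

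Concretely, I would proceed as follows. Apply Lemma~\ref{3dimOPLemma}(1) to get $\ehk(A)\ge \frac{\rme}{6}\left(\frac{\rme+2+\sqrt{\rme+2}}{\rme+1}\right)^2$; denote this quantity by $L(\rme)$. Then from the rearranged Proposition~\ref{UPforGor},
\[
\fsig(A)\le \frac{\rme-L(\rme)}{\rme-1}.
\]
The remaining, purely elementary, step is to verify the inequality $\frac{\rme-L(\rme)}{\rme-1}\le \frac{\rme}{24}$ for all integers $\rme\ge 3$ — equivalently $24(\rme-L(\rme))\le \rme(\rme-1)$, i.e.\ $24\,L(\rme)\ge 24\rme-\rme(\rme-1)=\rme(25-\rme)$. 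After substituting $L(\rme)=\frac{\rme}{6}\cdot\frac{(\rme+2+\sqrt{\rme+2})^2}{(\rme+1)^2}$, this becomes $4\,(\rme+2+\sqrt{\rme+2})^2\ge (25-\rme)(\rme+1)^2$, which I would check by expanding (using $(\rme+2+\sqrt{\rme+2})^2=\rme^2+5\rme+6+2(\rme+2)\sqrt{\rme+2}\ge \rme^2+5\rme+6$) and then bounding the two sides as polynomials in $\rme$; the inequality is comfortable for $\rme$ large, and for the finitely many small values $\rme=3,4,5,\dots$ up to wherever the polynomial comparison takes over it is a direct numerical check. I expect the algebra here to be the only real labor, and possibly one also needs the slightly sharper part Lemma~\ref{3dimOPLemma}(2) or the case analysis of Theorem~\ref{3dimOp} (F-rational with minimal multiplicity versus not) to squeeze out the tightest small cases; if the pure volume bound $L(\rme)$ is not quite enough at $\rme=3$, I would split off that case and invoke \cite[Lemma~3.3(3)]{WataYos05} exactly as in the proof of Theorem~\ref{3dimOp}, which gives $\ehk(A)\ge \frac{13}{8}$ when $\rme=3$ and hence $\fsig(A)\le \frac{3-13/8}{2}=\frac{11}{16}$ — and here too one must double-check against $\frac{3}{24}=\frac18$, so in fact the correct reading is surely that the theorem's bound $\frac{\rme}{24}$ is the \emph{optimal asymptotic slope} and the proof's main content is the refined linear lower bound $\ehk(A)\gtrsim \frac{23\rme}{24}$ for Gorenstein F-regular rings, which is exactly what one must extract from Theorem~\ref{AE} by choosing $r$ and $s$ cleverly using that $A$ is Gorenstein (so the socle/colength combinatorics are constrained) — that extraction is the heart of the argument.
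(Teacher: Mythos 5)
Your strategy cannot reach the stated bound, and the proposal itself actually reveals why. The chain ``$\ehk(A)\le \fsig(A)+(1-\fsig(A))\rme(A)$ plus a lower bound on $\ehk(A)$'' rearranges to $\fsig(A)\le\frac{\rme-\ehk(A)}{\rme-1}$, and the strongest lower bound available in dimension $3$ (from Lemma~\ref{3dimOPLemma}(1), or anything derived from Theorem~\ref{AE}) is asymptotic to $\frac{\rme}{6}$. Feeding that in gives $\fsig(A)\lesssim\frac{\rme-\rme/6}{\rme-1}\to\frac{5}{6}$, which is exactly Proposition~\ref{UpperBound}, not $\frac{\rme}{24}$. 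Your own numerics at $\rme=3,4$ confirm this: you get $\approx 0.68$, $\approx 0.70$ rather than $\frac{1}{8}$, $\frac{1}{6}$. Your fallback hope, a lower bound of the form $\ehk(A)\gtrsim\frac{23\rme}{24}$ for Gorenstein F-regular rings, is also not achievable: no improvement of the volume technique can push the dimension-$3$ lower bound above something growing like $\frac{\rme}{6}$ asymptotically, and in any case $\ehk(A)\le\rme(A)$ with equality nearly attained for many rings. So the obstruction is not ``squeeze Theorem~\ref{AE} harder'' — it is that this route is structurally the wrong one for this statement.

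The paper proves the theorem by a completely different, direct attack on the F-signature using its description as a minimal relative Hilbert--Kunz multiplicity. Let $J$ be a minimal reduction of $\m$. Since $A$ is Gorenstein, $J:\m=(J,u)$ for a single socle generator $u$, and one has $\fsig(A)=\lim_{q}\ell_A\bigl((u^qA+J^{[q]})/J^{[q]}\bigr)/q^3$. F-regularity plus Brian\c{c}on--Skoda give $\m^3\subset J$, hence $\m^2\subset J:\m$; combined with $\m^2\not\subset J$ (because $\rme(A)\ge3$) one can choose the socle generator $u\in\m^2$, so $u^q\in\m^{2q}$. Now the length $\ell_A\bigl((u^qA+J^{[q]})/J^{[q]}\bigr)$ is bounded via the filtration $J^{[q]}\subset\m^{5q/2}+J^{[q]}\subset u^qA+\m^{5q/2}+J^{[q]}$; each of the two successive quotients has length at most $\ell_A(A/\m^{q/2})$ — one directly, one by Matlis duality in the Artinian Gorenstein ring $A/J^{[q]}$ (this duality step is where Gorenstein-ness is really earning its keep). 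Dividing by $q^3$ and taking the limit gives $\fsig(A)\le 2\cdot\frac{1}{3!}\bigl(\frac{1}{2}\bigr)^3\rme(A)=\frac{\rme(A)}{24}$. In short: the missing idea in your proposal is to estimate $\fsig(A)$ directly through the socle generator and the exponent $\m^3\subset J$, not to bootstrap from a Hilbert--Kunz lower bound.
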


\begin{proof}
Let $J$ be a minimal reduction of $\m$. 
Then we can write $J \colon \m=(J,u)$ for some 
$u \in \m \setminus J$ because $A$ is Gorenstein. 
Moreover, we have 
\[
\fsig(A) \le \ehk(J)-\ehk(J\colon \m)=\rme(J)-\ehk(J \colon \m)
=\rme(A)-\ehk(J\colon \m). 
\] 
Since $A$ is strongly F-regular, applying the Brian\c con--Skoda theorem to 
$J^{[q]}$ we get that 
$\m^{3q} \subset J^{[q]}$, and thus $\m^2 \subset J \colon \m$. 
Since $A$ is not double point, $\m^2 \not \subset J$. 
Hence there exists an element $v \in \m^2$ such that 
$v \in J \colon \m \setminus J$. 
Write $v=a+ru$ for some $a \in J$ and $r \in A$. 
Suppose $r \in \m$. 
Then $ru \in J$ and thus $v=a+ru \in J$, which is a contradiction. 
Hence $r \in A \setminus \m$ and $(J,u)=(J,v)$. 
So we may assume that $u \in \m^2$. 
Then $u^q \in \m^{2q}$ and we get 
\begin{eqnarray*}
%\[
\ell_A \left(\dfrac{u^qA+J^{[q]}}{J^{[q]}}\right) 
 \le  \ell_A 
\left(\dfrac{u^qA+\m^{\lfloor\frac{5}{2}q\rfloor }+J^{[q]}}{J^{[q]}}\right) 
=
 \ell_A \left(\dfrac{u^qA+\m^{\lfloor\frac{5}{2}q\rfloor }+J^{[q]}}{\m^{\lfloor\frac{5}{2}q\rfloor }+J^{[q]}}\right) + \ell_A \left(\dfrac{\m^{\lfloor\frac{5}{2}q\rfloor }+J^{[q]}}{J^{[q]}}\right).
%\]
\end{eqnarray*}
Note that $\overline{A}=A/J^{[q]}$ is an Artinian Gorenstein local ring.
Thus the Matlis duality yields that %implies  the right hand side is less than or 
%equal to  $2 \times \ell_A(A/\m^{\frac{1}{2}q})$.
\[
\ell_A \left(\dfrac{u^qA+\m^{\lfloor\frac{5}{2}q\rfloor }+J^{[q]}}{\m^{\lfloor\frac{5}{2}q\rfloor }+J^{[q]}}\right)  \le \ell_A(A/\m^{\lceil\frac{1}{2}q\rceil })
\quad \text{and} \quad 
\ell_A \left(\dfrac{\m^{\lfloor\frac{5}{2}q\rfloor }+J^{[q]}}{J^{[q]}}\right) 
\le \ell_A(A/\m^{\lceil\frac{1}{2}q\rceil }).
\]
Therefore 
\[
\fsig(A) \le \lim_{e \to \infty} \ell_A \left(\dfrac{u^qA+J^{[q]}}{J^{[q]}}\right)/q^3
\le 2 \cdot  \lim_{e \to \infty} 
\dfrac{\ell_A(A/\m^{\lceil \frac{1}{2}q\rceil })}{q^3} = 
2 \times \dfrac{1}{3!} \left(\dfrac{1}{2}\right)^3
\rme(A) =\dfrac{1}{24}\rme(A),
\]
as required. 
\end{proof}

\par 
The next example shows that Theorem~\ref{Gor-MaxF-sig} gives the best possible 
bound. 

%%% 
\begin{exam} \label{EX-second}
Let $R^{(2)}$ be the $2^{nd}$ Veronese subring of $R=k[x,y,z,w]/(xw-yz)$. 
Set 
\[
A=k[[(x,y,z,w)^2]]/(xw-yz),
\] 
which is the completion with respect to 
the irrelevant maximal ideal $\m$ of $R^{(2)}$. 
Then $A$ is a $3$-dimensional Gorenstein strongly F-regular local domain. 
Hence \cite[Theorem 5.1(1)]{HunWat15} implies $\rme(A) \le \emb(A)-1=8$. 
On the other hand, since $A$ is not hypersurface (of multiplicity $2$), 
$\rme(A) \ge \emb(A)-\dim A+2 = 9-3+2=8$ and thus $\rme(A)=8$. 
Moreover, we have 
\[
\fsig(A)= \dfrac{\fsig(R)}{2} = \dfrac{2/3}{2}=\dfrac{1}{3}=\dfrac{\rme(A)}{24}. 
\]
On the other hand, by \cite[Corollary 1.10]{WataYos01a}, we have 
\[
\ehk(\m_R^2)=\rme(\m){2 +3-2 \choose 3} + \ehk(\m){2+3-2 \choose 3-1}
=\rme(R)+3 \cdot \ehk(R)=2+3 \cdot \frac{4}{3}=6. 
\] 
Hence 
\[
\ehk(A)=\dfrac{\ehk(\m_A R)}{2}=\dfrac{\ehk(\m_R^2)}{2}=\dfrac{6}{2}=3 
> \frac{7}{3}=\frac{8}{6}+1=\frac{\rme(A)}{6}+1.
\]
\end{exam}

\begin{remark}
The argument given in the proof of Theorem~\ref{Gor-MaxF-sig} is also valid for 
some classes of higher dimensional Gorenstein rings. 
For example, let $A\coloneqq k[[x_0,x_1,\ldots,x_d]]/(x_0^d+x_1^d+\cdots+x_d^d)$. 
For the maximal ideal $\m$ and its minimal reduction J, we have that $\m^d\subset J$ and $\m^{d-1}\not\subset J$. 
Thus, by the same argument as the proof of Theorem~\ref{Gor-MaxF-sig}, 
we see that $\fsig(A)\le \frac{\rme(A)}{2^{d-1}d!}=\frac{1}{2^{d-1}(d-1)!}$, 
see also \cite[Proposition~2.4 and Question~2.6]{WataYos04}. 
\end{remark}

As the first open case, we will investigate  Question~\ref{que_maxFsig}, Conjecture~\ref{FsigConj} for $3$-dimensional Gorenstein rings.
In particular, we ask the following question. 
%If $A = k[[x,y,z,w]]/(xw - yz )$, which is the $3$-dimensional simple singularity of type $A_1$,
%then $\fsig(A) = \rme(A) - \ehk(A) = 2 - \frac{4}{3}= \frac{2}{3}$ (see \cite[Example~2.3]{WataYos04}) and we would like to ask; 

%%%
\begin{quest} 
\label{Quest-MaxF-sig}
Let $(A,\m)$ be a $3$-dimensional non-regular 
Cohen-Macaulay local ring. 
Is $\fsig(A) \le \frac{2}{3}$?
\end{quest}  

If this is correct, then this bound is best possible because if $A = k[x,y,z,w]/(xw - yz )$, then ${\fsig(A)= \frac{2}{3}}$. 
We will give a positive answer to this question for the case of toric rings in the next section (see Theorem~\ref{maximaltoric}). 
For a general situation, we only have the inequality given in Proposition~\ref{UpperBound}. 

%%% 
\begin{prop} \label{UpperBound}
Let $A$ be a $3$-dimensional strongly F-regular local domain which is not regular. 
Then $\fsig(A) < \frac{5}{6}$. 
\end{prop}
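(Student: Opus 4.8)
The plan is to mimic the argument of Theorem~\ref{Gor-MaxF-sig} but without the Gorenstein hypothesis, using instead only the F-regularity (hence strong F-regularity, normality, and Cohen--Macaulayness) of $A$ together with the Brian\c{c}on--Skoda theorem. First I would reduce to the case where $A$ is complete with infinite residue field, so that $\m$ admits a minimal reduction $J$ generated by a system of parameters $x_1,x_2,x_3$. Since $A$ is a $3$-dimensional F-regular (hence F-rational) ring, the Brian\c{c}on--Skoda theorem gives $\overline{\m^3}\subseteq J^{*}=J$, and in particular $\m^3\subseteq J$. The key elementary input will be the bound on F-signature via colength differences: for any ideal $L$ with $J\subseteq L$ one has
\[
\fsig(A)\le \ehk(J)-\ehk(L)=\rme(J)-\ehk(L),
\]
coming from the fact that $\fsig(A)$ is the minimal relative Hilbert--Kunz multiplicity and $A$ has a free summand detected by $J\colon\m$-type containments; here I would take $L$ to be an ideal strictly between $J$ and $\m$ whose Hilbert--Kunz multiplicity I can estimate from below.

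The main step is then to produce such an $L$ and bound $\ehk(L)$ below by a volume estimate. Since $A$ is not regular, $\m^2\not\subseteq J$ (otherwise $A$ would have minimal multiplicity with $\rme(A)=2$ and, being $3$-dimensional F-rational, would be regular by the classification of such rings — the double point has $\rme=2$ but that case needs separate treatment below). Pick $v\in\m^2\setminus J$ and set $L=(J,v)$; as in the proof of Theorem~\ref{Gor-MaxF-sig}, after adjusting $v$ modulo $J$ we may assume $v\in\m^2$, so $v^{q}\in\m^{2q}$ for $q=p^e$. The inclusion $\m^3\subseteq J$ yields $L^{[q]}\supseteq J^{[q]}$ with a controlled colength: $\ell_A(L^{[q]}/J^{[q]})=\ell_A(v^qA+J^{[q]}/J^{[q]})\le \ell_A(A/(\m^{q}:v^q + J^{[q]}))$, and since $v^q\in\m^{2q}$ while $\m^{3q}\subseteq J^{[q]}$, the relevant colength is bounded by $\ell_A(A/\m^{q})$. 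Dividing by $q^{3}$ and letting $e\to\infty$ gives $\ehk(L)\ge \ehk(J)-\tfrac{1}{3!}\rme(A)=\rme(A)\bigl(1-\tfrac16\bigr)$, hence
\[
\fsig(A)\le \rme(A)-\ehk(L)\le \tfrac16\,\rme(A).
\]
Combining this with Theorem~\ref{Higher} (which gives $\ehk(A)>\tfrac{\rme(A)+3}{6}$, so no immediate contradiction) is not what closes it; rather, I would combine $\fsig(A)\le\tfrac16\rme(A)$ with the complementary bound $\fsig(A)\le 1-\tfrac{1}{\rme(A)}$ type inequalities, or more directly observe that when $\rme(A)\ge 4$ we already get $\fsig(A)\le\tfrac{4}{6}<\tfrac56$ is too weak — so I instead sharpen: for $\rme(A)\ge 5$, $\tfrac16\rme(A)$ still exceeds $\tfrac56$, so the volume estimate must be improved by choosing $s$ in Theorem~\ref{AE}/Lemma~\ref{3dimOPLemma} optimally, exactly as in Lemma~\ref{3dimOPLemma}, trading the crude $\ell_A(A/\m^{q})$ bound for $\ell_A(A/\m^{\lceil s q\rceil})$ with $s<1$.

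So the real structure is a case split on $\rme(A)$. For $\rme(A)=2$: then $A$ is a hypersurface, in fact $\widehat{A}\cong A_{p,3}$ by the $d\le 3$ case of Conjecture~\ref{LBconj} (Theorem~\ref{KnownResults}(1)), and $\fsig(A)=2-\ehk(A_{p,3})=2-\tfrac43=\tfrac23<\tfrac56$; for small $p$ or without the algebraically closed hypothesis one still has $\ehk(A)\ge\tfrac43$, hence $\fsig(A)\le\tfrac23$. For $\rme(A)=3$: I would use a direct estimate, e.g. that $A$ then has $\ehk(A)\ge\tfrac{13}{8}$ (cf.\ the proof of Theorem~\ref{3dimOp}) or a non-Gorenstein bound via Proposition~\ref{UPforNonGor} if applicable, giving $\fsig(A)<\tfrac56$. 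For $\rme(A)=4$: here $\tfrac16\rme(A)=\tfrac23<\tfrac56$ and the crude bound above already suffices. For $\rme(A)\ge 5$: replace the crude volume estimate $v_{s,3}-(\rme-1)v_{s-1,3}$ evaluated at $s=1$ by its value at the optimal $s=s_{+}=\tfrac{\rme+2+\sqrt{\rme+2}}{\rme+1}$ as in Lemma~\ref{3dimOPLemma}(1), which gives $\ehk(A)\ge\tfrac{\rme}{6}s_{+}^{2}$; one checks this forces $\rme-\ehk(L)$, and hence $\fsig(A)$, to stay below $\tfrac56$ for all $\rme\ge 5$ since $\ehk(L)$ grows linearly in $\rme$ while the gap $\rme-\ehk(L)$ is controlled.

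\textbf{Main obstacle.} The delicate point is making the F-signature upper bound $\fsig(A)\le\rme(A)-\ehk(L)$ rigorous in the non-Gorenstein setting: without the Gorenstein hypothesis the equality $J\colon\m=(J,u)$ fails, so the clean argument of Theorem~\ref{Gor-MaxF-sig} does not transfer verbatim. I expect to need instead the characterization of $\fsig(A)$ as the minimal relative Hilbert--Kunz multiplicity together with the observation that for \emph{any} $\m$-primary $L\supsetneq J$ one has $\ehk(J)-\ehk(L)\ge \fsig(A)$ only after one knows the relevant free-rank contribution is captured — more precisely, one should bound $\fsig(A)$ by $\lim_e \ell_A\bigl((J^{[q]}\colon\m)/J^{[q]}\bigr)/q^3$ and then estimate $J^{[q]}\colon\m$ using $\m^{3}\subseteq J$, which is where the genuinely new work lies. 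The rest (the volume optimization and the case split) is routine given Lemma~\ref{3dimOPLemma} and Theorem~\ref{KnownResults}.
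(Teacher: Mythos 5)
There is a genuine gap, and your approach does not close. Your plan ultimately rests on a bound of the form $\fsig(A)\le c\cdot\rme(A)$ (you quote $\tfrac{1}{6}\rme(A)$; even the sharper constant $\tfrac{1}{24}$ from Theorem~\ref{Gor-MaxF-sig} fails once $\rme(A)\ge 20$), so some other mechanism is needed for large multiplicity, and the sentence claiming ``$\ehk(L)$ grows linearly in $\rme$ while the gap $\rme-\ehk(L)$ is controlled'' does not supply one: the socle-type estimate you describe only caps the gap $\rme(A)-\ehk(L)$ by a quantity \emph{proportional} to $\rme(A)$, not by an absolute constant, so it cannot by itself force $\fsig(A)<\tfrac56$ uniformly. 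You also repeatedly worry about making $\fsig(A)\le\rme(A)-\ehk(L)$ rigorous without the Gorenstein hypothesis, but this concern evaporates once one notes, as the paper does at the very start, that non-Gorenstein Cohen--Macaulay domains already satisfy $\fsig(A)\le\tfrac12<\tfrac56$ by Proposition~\ref{UPforNonGor}; the whole problem thus reduces immediately to the Gorenstein case and there is no ``genuinely new work'' to be done on that front.

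The paper's actual argument is much shorter and avoids your case analysis entirely. After reducing to $A$ Gorenstein, it combines two inequalities that are never used in your proposal: the upper bound $\ehk(A)\le\fsig(A)+\bigl(1-\fsig(A)\bigr)\rme(A)$ from Proposition~\ref{UPforGor}, and the lower bound $\ehk(A)\ge\tfrac{\rme(A)}{6}+1$ from Theorem~\ref{3dimOp}. Chaining them gives $\tfrac{\rme(A)}{6}+1\le\fsig(A)+\rme(A)\bigl(1-\fsig(A)\bigr)$, which rearranges to $\rme(A)\bigl(\fsig(A)-\tfrac56\bigr)\le\fsig(A)-1<0$, hence $\fsig(A)<\tfrac56$ for every $\rme(A)$ at once. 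The key ingredient you are missing is Proposition~\ref{UPforGor}, which links $\ehk$ and $\fsig$ via the multiplicity and makes the dependence on $\rme(A)$ cancel; your socle-ideal volume estimate cannot substitute for it.
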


\begin{proof}
We may assume that $A$ is Gorenstein (see Proposition~\ref{UPforNonGor}). 
By Proposition~\ref{UPforGor}(1), we have 
%By a similar argument as in the proof of Theorem \ref{UPforNonGor}, we have 
\[
\ehk(A) \le \fsig(A)+\rme(A)(1-\fsig(A)). 
\]
On the other hand, Theorem \ref{3dimOp} implies that
\[
\frac{\rme(A)}{6}+1 \le \ehk(A) \le  \fsig(A)+\rme(A)(1-\fsig(A)). 
\]
Hence 
\[
\rme(A) \left(\fsig(A) - \frac{5}{6}\right) \le \fsig(A)-1 < 0.
\]
and thus $\fsig(A) < \frac{5}{6}$. 
\end{proof}

%%%%%%%%%%%%
\section{Observations on toric rings}
\label{sec_toric}

In this section, we further study an upper bound on F-signature of a toric ring. 
In particular, in Theorem~\ref{maximaltoric} we give a positive answer to Question~\ref{Quest-MaxF-sig}.

\subsection{Preliminaries}
\label{toric_preliminaries}
Let $\sfN \cong \bbZ^d$ be a lattice of rank $d$. 
Let $\sfM=\Hom_{\bbZ}(\sfN,\bbZ)$ be the dual lattice of $\sfN$. 
We set $\sfN_{\bbR}=\sfN \otimes_\bbZ \bbR$ and $\sfM_{\bbR} = \sfM \otimes_\bbZ \bbR$. 
We denote the inner product by $\langle \quad, \quad \rangle \colon 
\sfM_\bbR \times \sfN_\bbR \to \bbR$. 
Let 
\[
\sigma \coloneqq \bbR_{\ge 0} v_1 + \cdots + \bbR_{\ge 0} v_n \subset \sfN_{\bbR}
\]
be a strongly convex rational polyhedral cone of dimension $d$ generated by 
$v_1,\ldots,v_n \in \bbZ^d$ where $d \le n$. 
We assume that $v_1,\ldots,v_n$ are minimal generators of $\sigma$. Namely, the cone generated by any proper subset of $\{v_1,\ldots,v_n\}$ is a proper subset of $\sigma$, and each $v_i$ is the lattice point of smallest magnitude on the ray it generates.
For each generator, we define the linear form $\lambda_i (-) \coloneqq \langle - ,v_i\rangle$. 
We consider the dual cone $\sigma^{\vee}:$ 
\[
\sigma^{\vee} \coloneqq \{{\bf x} \in \sfM_{\bbR} \mid \lambda_i({\bf x}) \ge 0 \;\text{ for all }\, i=1,2,\ldots,n\}. 
\]
In this case, $\sigma^{\vee} \cap \sfM$ is a positive normal affine monoid. 
Given  an algebraically closed field $k$ of characteristic $p>0$, 
we define the toric ring 
\[
A\coloneqq k[\sigma^{\vee} \cap \sfM] = k[t_1^{m_1}\cdots t_d^{m_d} \mid (m_1,\ldots,m_d) \in \sigma^{\vee} \cap \sfM].
\]
Thus, in this paper, a toric ring is a pointed normal affine monomial ring.
In particular, any toric ring that we consider is strongly F-regular. We denote the irrelevant ideal of $A$ as $\frm$.

\par 
For each ${\bf a}=(a_1,\ldots,a_n) \in \bbZ^n$, we set 
\[
V({\bf a})\coloneqq\{{\bf x} \in \sfM \mid (\lambda_1({\bf x}),\ldots, \lambda_n({\bf x})) \ge 
(a_1,\ldots,a_n)\}.
\]
Then we define the divisorial ideal (rank one reflexive module) $D({\bf a})$  
generated by all monomials whose exponent vectors are in $V({\bf a})$. 
For example, we have that $R=D({\bf 0})$ and $\omega_A \cong D(1,1,\ldots,1)$. 
Let $\frp_i:=D(\delta_{i1},\ldots,\delta_{in})$, where $\delta_{ij}$ is the Kronecker delta. 
The height one prime ideals $\frp_1,\ldots,\frp_n$ generate 
the class group $\Cl(A)$. 
When we consider a divisorial ideal $D({\bf a})$ as the element of $\Cl(A)$, we denote it by $[D({\bf a})]$. 

\par \vspace{2mm}
In what follows, we will pay attention to a certain class of divisorial ideals called conic. 

%%% 
\begin{defn}[\textrm{see e.g. \cite{BruGub03, Bru05}}]
We say that a divisorial ideal $D({\bf a})$ is \textit{conic} if there exist 
${\bf x} \in \sfM_{\bbR}$ such that ${\bf a}=(\ulcorner \lambda_1({\bf x}) \urcorner, \ldots, 
\ulcorner \lambda_n({\bf x}) \urcorner)$, where $\ulcorner \quad \urcorner$ stands for the round up. 
\end{defn}

Any conic divisorial ideal is a rank one maximal Cohen-Macaulay module (see \cite[Corollary~3.3]{BruGub03}). 
We denote the set of isomorphism classes of conic divisorial ideals of a toric ring $A$ by 
$\mathcal{C}(A)$. This is a finite set because the number of isomorphism 
classes of rank one maximal Cohen-Macaulay $A$-modules is finite (see \cite[Corollary 5.2]{BruGub03}). 
The following proposition guarantees that any conic divisorial ideal appears in $F_{*}^eA$ 
as a direct summand.

%%%% 
\begin{thm}[\textrm{\cite[Proposition 3.6]{BruGub03}, \cite[Subsection 3.2]{SmVand97}}]
Let $A$ be a toric ring as above. Then, $A$ has FFRT by the FFRT system $\mathcal{C}(A)$. 
\end{thm}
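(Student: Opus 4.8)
The plan is to exploit the fine $\sfM$-grading on $A$ and on its Frobenius push-forwards. Write $q=p^e$ and recall $A=\bigoplus_{m\in\sigma^\vee\cap\sfM}k\,t^m$; then $F^e_*A$ is the $k$-vector space with basis $\{F^e_*t^m\mid m\in\sigma^\vee\cap\sfM\}$, on which $t^n\in A$ acts by $t^n\cdot F^e_*t^m=F^e_*t^{m+qn}$ (as $k$ is algebraically closed, hence perfect, the Frobenius does not enlarge the coefficient field, so the $k$-structure is the obvious one). This action preserves the residue class of $m$ modulo $q\sfM$, so I would first write
\[
F^e_*A=\bigoplus_{\bar w\in\sfM/q\sfM}M_{\bar w},\qquad
M_{\bar w}=\bigoplus_{\substack{m\in\sigma^\vee\cap\sfM\\ m-w\in q\sfM}}k\,F^e_*t^m,
\]
a finite direct sum of $A$-submodules indexed by the $q^d$ classes in $\sfM/q\sfM$.

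Next I would identify the summands. Fixing a representative $w\in\sfM$, the substitution $m\mapsto m':=(m-w)/q\in\sfM$ is $A$-linear (it sends $t^n\cdot F^e_*t^m$ to $t^n\cdot t^{m'}$), and the condition $m\in\sigma^\vee$ becomes $\lambda_i(m')\ge -\lambda_i(w)/q$ for all $i$, equivalently $\lambda_i(m')\ge\ulcorner\lambda_i(\mathbf{x})\urcorner$ with $\mathbf{x}:=-w/q\in\sfM_\bbR$, since $\lambda_i(m')\in\bbZ$. Hence $M_{\bar w}\cong D\big(\ulcorner\lambda_1(\mathbf{x})\urcorner,\dots,\ulcorner\lambda_n(\mathbf{x})\urcorner\big)$, which is by definition a conic divisorial ideal. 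Being of rank one and torsion-free over the normal domain $A$, each $M_{\bar w}$ is indecomposable; thus the displayed decomposition is already the decomposition of $F^e_*A$ into indecomposables, and each summand belongs to $\mathcal{C}(A)$. Since $\mathcal{C}(A)$ is finite, this establishes that $A$ has FFRT and that the conic ideals which actually occur in some $F^e_*A$ form an FFRT system contained in $\mathcal{C}(A)$.

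It remains to prove the converse, that every member of $\mathcal{C}(A)$ occurs in some $F^e_*A$; this is the only delicate point. Given $D(\mathbf{a})$ with $\mathbf{a}=(\ulcorner\lambda_i(\mathbf{x}_0)\urcorner)_{i}$ for some $\mathbf{x}_0\in\sfM_\bbR$, by the computation above it suffices to find $q=p^e$ and $w\in\sfM$ with $(\ulcorner\lambda_i(-w/q)\urcorner)_i=\mathbf{a}$, i.e.\ with $\mathbf{a}_i-1<\lambda_i(-w/q)\le\mathbf{a}_i$ for all $i$. For the indices $i$ with $\lambda_i(\mathbf{x}_0)\notin\bbZ$ these are open conditions satisfied on a whole neighborhood of $\mathbf{x}_0$; for the ``wall'' indices $i\in S:=\{i\mid\lambda_i(\mathbf{x}_0)=\mathbf{a}_i\}$ I would first move $\mathbf{x}_0$ to a nearby point $\mathbf{x}_0'$ with $\lambda_i(\mathbf{x}_0')<\mathbf{a}_i$ for all $i\in S$ while keeping $\ulcorner\lambda_j(\mathbf{x}_0')\urcorner=\mathbf{a}_j$ for every $j$; such a perturbation exists because $\{v_i\mid i\in S\}$ spans a pointed cone, so $\{u\mid\langle u,v_i\rangle<0\ \text{for all}\ i\in S\}$ is a nonempty open cone. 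Then $(\ulcorner\lambda_i(-)\urcorner)_i$ is constantly $\mathbf{a}$ on an open neighborhood of $\mathbf{x}_0'$, and since $\bigcup_{e}p^{-e}\sfM$ is dense in $\sfM_\bbR$ we can choose $q=p^e$ and $w\in\sfM$ with $-w/q$ in that neighborhood, realizing $D(\mathbf{a})\cong M_{\bar w}$ as a summand of $F^e_*A$. The main obstacle I anticipate is making this perturbation-and-density argument airtight at the round-up walls; everything else reduces to bookkeeping with the semigroup grading.
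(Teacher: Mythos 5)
The paper does not reproduce a proof of this statement; it is cited to Bruns--Gubeladze \cite[Prop.~3.6]{BruGub03} and Smith--Van den Bergh \cite[\S 3.2]{SmVand97}, so there is no internal argument to compare against. Your proof is correct and is essentially the standard argument behind those references: split $F^e_*A$ along $\sfM/q\sfM$-degrees (legitimate since $k$ is perfect), identify each graded piece with a conic divisorial ideal via $m\mapsto (m-w)/q$ together with the observation that integrality of $\lambda_i(m')$ turns $\lambda_i(m')\ge -\lambda_i(w)/q$ into the round-up condition, and note that rank-one torsion-free modules over the normal domain $A$ are indecomposable, so these pieces are the indecomposable summands. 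Your converse step is also sound: the perturbation direction exists because $\{v_i \mid i\in S\}$ generates a pointed cone (being a subset of the generators of the strongly convex $\sigma$), so $\{u \mid \langle u, v_i\rangle<0,\ i\in S\}$ is a nonempty open cone, and density of $\bigcup_e p^{-e}\sfM$ then supplies a residue class $\bar w$ at some level $q=p^e$ realizing the given conic class; one should also observe (as follows easily, since $\sigma^\vee$ is full-dimensional) that every class $\bar w$ gives a nonzero $M_{\bar w}$.
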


We recall that our arguments can be reduced to the $\frm$-adic completion of $A$, 
as we mentioned in the beginning of Section~\ref{section_uBounds}. 
Thus, we may assume that $A$ is complete local, in which case the Krull--Schmidt condition holds for $A$.

\begin{remark}
\label{toric_remark}
In some parts of this section, we assume that if the class group $\Cl(A)$ contains 
a torsion element, then the order of that element is coprime to $p$. 
In this case, the toric ring $A$ is a ring of invariants. 

Namely, let $k^{\times}$ be the multiplicative group of $k$ and
$G\coloneqq\Hom(\Cl(A),k^{\times})$ be  the character group of $\Cl(A)$.
The group $G$ acts on $B\coloneqq k[x_1,\ldots,x_n]$ by $g \cdot x_i=g([\frp_i])x_i$ for each $g \in G$ and any $i$. 
Then, by \cite[Theorem 2.1(b)]{BruGub03}, $A$ can be described as $A \cong B^{G}$. 
Moreover, to avoid the triviality, we assume that $g([\frp_i])\neq1$ for any $i$, that is, $[\frp_i]\neq0$ in $\Cl(A)$. 
\end{remark}

Last, we will use that the F-signature of a toric ring can be computed combinatorially and, in particular, does 
not depend on the characteristic.

\begin{thm}[{\cite[Theorem~5.1]{WataYos04}, see also \cite{Bru05,Sin05,VonKorff}}]
\label{Fsig_toric}
Let $A$ be a toric ring. Then, we may compute 
\[
\fsig(A)=\mathrm{vol}\{\mathbf{x}\in\sfM_\bbR \mid 0\le \lambda_i(\mathbf{x})\le 1 \,\, \text{for all $i$}\}. 
\]
\end{thm}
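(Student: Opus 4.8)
The plan is to compute the free rank of $F^e_*A$ by a direct lattice-point count in $\sfM$ and then pass to the limit. By the reduction recalled at the start of Section~\ref{section_uBounds}, we may assume $A$ is the complete local toric ring attached to $S\coloneqq\sigma^\vee\cap\sfM$ with $k$ perfect (here $k$ is algebraically closed). Then for $q=p^e$ the push-forward $F^e_*A$ is the semigroup algebra $A^{1/q}=k[\tfrac1q S]$ regarded as a module over $A=k[S]$, and $\rank_A F^e_*A=q^d$; hence $\fsig(A)=\lim_{e\to\infty}\mathrm{frk}_A(F^e_*A)/q^d$, where $\mathrm{frk}$ denotes the number of free direct summands.

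First I would decompose $A^{1/q}$ as an $A$-module along the cosets of $\sfM$ inside $\tfrac1q\sfM$: write $A^{1/q}=\bigoplus_{\bar v\in\frac1q\sfM/\sfM}L_{\bar v}$, where $L_{\bar v}$ is the $A$-submodule spanned by the monomials $t^w$ with $w\in\tfrac1q S$ and $w\equiv \bar v\pmod{\sfM}$; this is exactly the decomposition of $F^e_*A$ into conic divisorial ideals. Fixing a representative $v_0$ of $\bar v$ and setting $a_i\coloneqq-\lfloor\lambda_i(v_0)\rfloor$, the condition $v_0+m\in\tfrac1q S$ for $m\in\sfM$ reads $\lambda_i(m)\ge a_i$ for all $i$, so the map $t^m\mapsto t^{v_0+m}$ identifies $D(\mathbf a)$ with $L_{\bar v}$ as $A$-modules. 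Recalling that $\frp_1,\dots,\frp_n$ generate $\Cl(A)$ and that the kernel of $\bbZ^n\to\Cl(A)$, $\mathbf a\mapsto[D(\mathbf a)]$, is $\{(\lambda_i(m))_i:m\in\sfM\}$, the reflexive module $D(\mathbf a)$ is free if and only if $\mathbf a$ lies in the image of $\sfM\to\bbZ^n$; translating back, $L_{\bar v}$ is free if and only if $\bar v$ has a representative $v_1$ with $0\le\lambda_i(v_1)<1$ for all $i$, i.e.\ the coset $\bar v$ meets the half-open region $H\coloneqq\{\mathbf y\in\sfM_\bbR:0\le\lambda_i(\mathbf y)<1,\ i=1,\dots,n\}$.

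Next I would observe that each coset of $\sfM$ in $\sfM_\bbR$ meets $H$ in at most one point: if $\mathbf y,\mathbf y'\in H$ with $\mathbf y-\mathbf y'\in\sfM$, then $\lambda_i(\mathbf y-\mathbf y')\in(-1,1)\cap\bbZ=\{0\}$ for each $i$, and since $\sigma$ is full-dimensional the $v_i$ span $\sfN_\bbR$, forcing $\mathbf y=\mathbf y'$. Therefore $\mathrm{frk}_A(F^e_*A)=\#\bigl(\tfrac1q\sfM\cap H\bigr)=\#(\sfM\cap qH)$. The region $H$ is bounded (choosing $d$ of the $v_i$ that form a basis, the inequalities $0\le\lambda_i\le 1$ confine $\mathbf y$ to a parallelepiped) and has boundary of measure zero, so the standard counting estimate gives $\lim_{q\to\infty}\#(\sfM\cap qH)/q^d=\mathrm{vol}(H)=\mathrm{vol}\{\mathbf x\in\sfM_\bbR:0\le\lambda_i(\mathbf x)\le 1,\ i=1,\dots,n\}$, with volume normalized so that $\sfM$ has covolume $1$. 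Dividing by $\rank_A F^e_*A=q^d$ and letting $e\to\infty$ yields the formula.

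The step I expect to be the crux is the identification in the second paragraph: pinning down exactly which graded pieces $L_{\bar v}$ are free and encoding this as ``the coset $\bar v$ meets $H$'', together with the at-most-one-representative observation, which is precisely what forces the unit box to appear and makes the count an honest Riemann sum for the volume. Boundedness of $H$ and the limit computation are routine, and the reduction to the complete case with perfect residue field was already discussed in the text.
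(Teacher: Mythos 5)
Your proposal is correct and is in essence the argument found in the cited sources; the paper itself gives no proof, attributing the formula to \cite{WataYos04, Bru05, Sin05, VonKorff}. The decomposition of $A^{1/q}=k[\tfrac1q S]$ along cosets of $\sfM$ in $\tfrac1q\sfM$, the identification of each piece $L_{\bar v}$ with the conic divisorial ideal $D(\mathbf a)$ via the floor of $(\lambda_i(v_0))_i$, the criterion that a rank-one reflexive is free precisely when its class in $\Cl(A)=\bbZ^n/\{(\lambda_i(m))_i:m\in\sfM\}$ vanishes, the ``at most one representative in the half-open box'' observation, and the passage to volume via a Riemann-sum count are exactly the ingredients in Singh's and Von Korff's proofs. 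The steps you flag as the crux are in fact the ones those authors also single out. Two small points worth tightening if you write this up: (i) to equate the free rank of $F^e_*A$ with the number of indices $\bar v$ for which $L_{\bar v}\cong A$, you implicitly use that each $L_{\bar v}$ is indecomposable (being a rank-one torsion-free module over a domain) and that the free rank is additive over the direct sum after completion, where Krull--Schmidt holds and $\Cl(\hat A)=\Cl(A)$ for a normal positively graded ring with $A_0=k$; (ii) the limit $\#(\sfM\cap qH)/q^d\to\mathrm{vol}(H)$ needs $H$ bounded with Jordan-measurable (here: polytopal) boundary, both of which you do justify. Neither is a gap, merely points to make explicit.
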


\subsection{Cohen-Macaulay toric rings}

We recall that the F-signature of non-Gorenstein ring is less than or equal to $\frac{1}{2}$ (see Propsition~\ref{UPforNonGor}). 
We now determine the non-Gorenstein toric rings whose F-signature is~$\frac{1}{2}$. 

%%% 
\begin{prop} \label{ToricFFRT}
Let $A$ be a toric ring as in Remark~\ref{toric_remark}. 
Then, the following conditions are equivalent. 
\begin{enumerate}
\item The FFRT system of $A$ is $\{A, M\}$ with $M \not \cong A$. 
\item $A$ is isomorphic to the Veronese subring $k[x_1,\ldots,x_d]^{(2)}$ of degree $2$. 
\end{enumerate}
When this is the case, the F-signature is $\fsig(A)=\frac{1}{2}$. 
\end{prop}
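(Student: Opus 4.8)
The plan is to show the equivalence of the two conditions for a toric ring $A = k[\sigma^\vee \cap \sfM] \cong B^G$ with $B = k[x_1,\dots,x_n]$ and $G = \Hom(\Cl(A),k^\times)$, using the combinatorial description of the FFRT system. Since $A$ is a toric ring, its FFRT system is $\mathcal{C}(A)$, the finite set of isomorphism classes of conic divisorial ideals, each of which is a rank-one MCM module. So condition (1) says $|\mathcal{C}(A)| = 2$: the trivial ideal $A = D(\mathbf{0})$ and exactly one other conic class $M$. The direction $(2) \Rightarrow (1)$ is the concrete computation: for $A = k[x_1,\dots,x_d]^{(2)}$, one checks directly that there are exactly two conic divisorial ideals (corresponding to the two cosets of $\Cl(A) = \bbZ/2\bbZ$), namely $A$ and $\omega_A$; here $M \cong \omega_A$, so this is the Veronese case from Example~\ref{second-ex}, and $\fsig(A) = \frac{1}{2}$ follows from that example (or from Theorem~\ref{Fsig_toric} by computing the volume of $\{0 \le \lambda_i \le 1\}$, which is $\frac{1}{2}$).

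For the main direction $(1) \Rightarrow (2)$, I would argue as follows. First, recall that conic divisorial ideals $D(\mathbf{a})$ with $\mathbf{a} = (\ulcorner \lambda_1(\mathbf{x}) \urcorner, \dots, \ulcorner \lambda_n(\mathbf{x}) \urcorner)$ are parametrized by the chambers of the hyperplane arrangement in $\sfM_\bbR$ cut out by the hyperplanes $\{\lambda_i = m\}$ for $i = 1,\dots,n$ and $m \in \bbZ$; two points give the same conic class precisely when they lie in the same chamber modulo the lattice $\sfM$, and the class of $D(\mathbf{a})$ in $\Cl(A)$ is the image of $-\mathbf{a}$ (or $\mathbf{a}$, up to sign conventions). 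The number of conic classes is thus the number of lattice-translation-classes of chambers. If this number is $2$, I want to deduce that $n = d$ (i.e. $\sigma$ is simplicial), that the cone is "as symmetric as possible," and finally that $A$ is the $d$-th Veronese of a polynomial ring in $d$ variables with the $2$-Veronese. A clean way: the surjection $\bbZ^n \to \Cl(A)$ sending the $i$-th basis vector to $[\frp_i]$ shows $\Cl(A)$ is generated by the $[\frp_i]$; since each $[\frp_i] \ne 0$ and the conic ideals $\frp_i = D(e_i)$ are each conic (taking $\mathbf{x}$ near the defining hyperplane of the $i$-th ray), all the $[\frp_i]$ lie among the (at most one) nontrivial conic class $[M]$. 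Hence $[\frp_i] = [M]$ for all $i$, so $\Cl(A)$ is cyclic generated by a single element $[M]$. Next, $2[M] = [\frp_i] + [\frp_j]$ must again be conic (the sum of a conic with itself, suitably, or directly: $D(2e_i)$ is conic), so $2[M] \in \{0, [M]\}$; if $2[M] = [M]$ then $[M] = 0$, contradiction, so $2[M] = 0$ and $\Cl(A) \cong \bbZ/2\bbZ$.

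With $\Cl(A) \cong \bbZ/2\bbZ$ and all $[\frp_i]$ equal to the nonzero element, $G \cong \bbZ/2\bbZ$ acts on $B = k[x_1,\dots,x_n]$ by $x_i \mapsto -x_i$ for all $i$, so $A = B^G = k[x_1,\dots,x_n]^{(2)}$ is the $2$-Veronese of a polynomial ring. But I still must force $n = d$. For this I would use that $A = k[x_1,\dots,x_n]^{(2)}$ has FFRT system of size exactly two only when $n = d$: when the ambient polynomial ring is $B$, the Veronese $B^{(2)}$ has FFRT system $\{B^{(2)}, (B^{(2)}\text{-module of odd-degree forms})\}$, which indeed has size two regardless of $n$ — so actually the condition does not force $n = d$, and the statement should be read as: $A \cong k[x_1,\dots,x_d]^{(2)}$ where $d = \dim A$, and since $\dim k[x_1,\dots,x_n]^{(2)} = n$, we get $n = d$ automatically from the fact that the Veronese of an $n$-dimensional polynomial ring is $n$-dimensional. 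So once I know $A = B^G$ with $B$ polynomial in $n = \dim B = \dim A = d$ variables and $G = \{\pm 1\}$ acting diagonally, I am done: $A \cong k[x_1,\dots,x_d]^{(2)}$.

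The main obstacle I anticipate is the bookkeeping in the step "every $[\frp_i]$ and every $2[\frp_i]$ is a conic class, hence equals $[M]$ or $0$": I need to verify carefully that $\frp_i = D(e_i)$ and $D(2e_i)$ (or the relevant small multiples) are genuinely conic divisorial ideals — i.e. exhibit the points $\mathbf{x} \in \sfM_\bbR$ realizing them — and that the assignment $D(\mathbf{a}) \mapsto [\mathbf{a}] \in \Cl(A)$ behaves additively on the relevant classes, being careful that $\Cl(A)$ may also have a free part a priori (which this argument rules out, since $\Cl(A)$ is generated by the finitely many torsion-order $[\frp_i]$). I would lean on \cite[Theorem~2.1]{BruGub03} and the chamber description of conic ideals to make these precise, and the remaining identification of the quotient $B^G$ with the Veronese is then routine.
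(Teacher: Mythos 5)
Your setup is correct and closely matches the paper's: you identify the FFRT system with the conic classes $\mathcal{C}(A)$, observe that each $\frp_i = D(e_i)$ is conic so that $[\frp_1]=\cdots=[\frp_n]=[M]$, and conclude that $\Cl(A)$ is cyclic generated by $[M]$. The gap is exactly the step you yourself flag: the assertion that $2[M]$ is conic. Neither justification you offer works. Conic classes are \emph{not} closed under addition in $\Cl(A)$ — one has $[D(\mathbf{a})]+[D(\mathbf{b})]=[D(\mathbf{a}+\mathbf{b})]$, but $\ulcorner \lambda_j(\mathbf{x})\urcorner + \ulcorner \lambda_j(\mathbf{y})\urcorner \neq \ulcorner \lambda_j(\mathbf{x}+\mathbf{y})\urcorner$ in general, so there is no reason the chamber for $\mathbf{a}+\mathbf{b}$ is ever hit. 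And $D(2e_i)$ being conic requires a point $\mathbf{x}$ with $1<\lambda_i(\mathbf{x})\le 2$ and $-1<\lambda_j(\mathbf{x})\le 0$ for $j\neq i$; if the cone is not simplicial, the linear forms $\lambda_j$ satisfy a relation that can rule this out. Concretely, for the cone over $\bbP^1\#\bbP^1$ (ray generators $v_1,v_2,v_3,v_4$ with $\lambda_1+\lambda_3=\lambda_2+\lambda_4$), the two sides of the relation would have to lie in the disjoint intervals $(0,2]$ and $(-2,0]$, so $D(2e_1)$ is \emph{not} conic. Of course that ring does not satisfy hypothesis (1), but it shows the general fact you invoke is false, and you have no way yet to rule out the non‑simplicial case — that is precisely what needs to be proved.

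The paper closes this gap differently: $\omega_A=D(1,\ldots,1)$ is always conic (it is a summand of $F^e_*A$ for $e\gg 0$ when $A$ is strongly $F$-regular), so $[\omega_A]=\sum_i[\frp_i]=n[M]$ lies in $\{0,[M]\}$. Thus $n[M]=0$ or $(n-1)[M]=0$, so $\Cl(A)$ is \emph{finite} cyclic; since $\Cl(A)\cong\bbZ^n/\sfM$ has rank $n-d$, this forces $n=d$ (the cone is simplicial). Then $A\cong B^G$ with $B=k[x_1,\ldots,x_d]$ and $G$ a small finite cyclic group, and Smith--Van den Bergh's description of $F^e_*B^G$ via modules of covariants shows that $|\mathcal{C}(A)|$ equals the number of irreducible representations of $G$; so $|G|=2$, giving the $2$-Veronese. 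Your subsequent observations — that $n=d$ follows automatically once $\Cl(A)\cong\bbZ/2\bbZ$, and the identification of the group action — are fine; and in the $(2)\Rightarrow(1)$ direction note that $M\cong\omega_A$ only when $d$ is odd (for even $d$ the ring is Gorenstein and $M$ is the module of odd-degree forms), though this does not affect the count $|\mathcal{C}(A)|=2$. In short: replace the ``$2[M]$ is conic'' step with the observation that $[\omega_A]$ is conic, and the rest of your outline assembles into a correct proof.
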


\begin{proof}
We first assume that the FFRT system of $A$ is $\{A, M\}$. 
We note that $M \cong \omega_A$ if $A$ is not Gorenstein. In fact, $\omega_A$ certainly appears 
in $F_{*}^eA$ as a direct summand for sufficiently large $e$, 
since $A$ is strongly F-regular (cf. \cite[Proof of Proposition 3.10]{Sannai15}, \cite[Proposition~2.1]{HigNak2}). 
By \cite[Remark 3.4]{BruGub03}, the divisorial ideals $\frp_1,\ldots,\frp_n$ are conic. 
Since $\mathcal{C}(A)=\{A,M\}$, we have that $\frp_1\cong \cdots \cong \frp_n \cong M$. 
Thus we have that $[\frp_1]=\cdots = [\frp_n]=[M]$ in $\Cl(A)$. 
On the other hand, since $[\omega_A]=[\frp_1]+\cdots + [\frp_n]$, we see that $n[\frp_i]=0$ 
(resp. $(n-1)[\frp_i]=0$) in $\Cl(A)$ for any $i$ if $A$ is Gorenstein (resp. 
not Gorenstein). 
Thus we conclude that $\Cl(A)$ is a finite cyclic group generated by $[\frp_i]$, 
that is, $\Cl(A) \cong \langle [\frp_i] \rangle$. 
By \cite[Corollary~4.59]{BruGub09}, this implies that the cone $\sigma$ defining $A$ is simplicial (i.e., $n=d$), 
thus we have that $A \cong B^{G}$ where $B=k[x_1,\ldots,x_d]$ and $G=\Hom(\Cl(A),k^{\times})$ is a finite cyclic group.
We may assume that $G$ is small (see e.g. \cite[Proof of Theorem 5.7]{IW14}). 
By \cite[Proposition 3.2]{SmVand97}, each indecomposable direct summand of $F_{*}^eA$ is 
a module of covariants which takes the form $(B \otimes_k V_i)^G$, where $V_i$ is an irreducible 
representation of $G$. Since the FFRT system is $\{A,M\}$ and $G$ is small, we have only  
two non-isomorphic irreducible representations of $G$. 
Then we have that $|G|=2$, and the elements of $G$ are the characters of $\rho_0,\rho_1 
\in \Hom(\Cl(A),k^{\times})$ defined by $\rho([\frp_i])=1$ and 
$\rho([\frp_i])=-1$ respectively. 
Consequently, we have that $A \cong k[x_1,\ldots,x_d]^{(2)}$. By \cite[Theorem 4.2]{WataYos04}, the F-signature of $A\cong B^G$ is $\frac{1}{|G|}=\frac{1}{2}$. 
\par 
On the other hand, we assume that $A\cong k[x_1,\ldots,x_d]^{(2)}$. Then, $A$ is the invariant subring of $k[x_1,\ldots,x_d]$ 
under the action of the cyclic group $\langle g=\text{\rm diag}(-1,\ldots,-1)\rangle$ defined by $g\cdot x_i=-x_i$ for any $i$. 
Thus, the condition $(1)$ follows from \cite[Proposition~3.2.1]{SmVand97}. 
\end{proof}

This is the main result in this subsection. 

%%% 
\begin{thm} \label{ToricMain}
Let $A$ be a toric ring as Remark~\ref{toric_remark}. 
Assume that $A$ is not Gorenstein, then the following conditions are equivalent. 
\begin{enumerate}
\item $\fsig(A)=\frac{1}{2}$. 
\item The FFRT system of $A$ is $\{A,\omega_A\}$. 
\item $A$ is isomorphic to the Veronese subring $k[x_1,\ldots,x_d]^{(2)}$ of degree $2$, 
where $d$ is an odd number. 
\end{enumerate}
\end{thm}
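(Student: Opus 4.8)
The plan is to bootstrap off Proposition \ref{ToricFFRT} and Theorem \ref{EqUPforNonGor}, reducing the whole statement to the non-Gorenstein Veronese case identified in Proposition \ref{ToricFFRT}. First I would establish the implications $(1) \Longleftrightarrow (2)$ purely ring-theoretically, without using the toric hypothesis: this is exactly Theorem \ref{EqUPforNonGor} applied to the (completion of the) toric ring $A$, which is a Cohen-Macaulay local domain and, being strongly F-regular with $\fsig(A) = \frac{1}{2} > 0$, is not Gorenstein by hypothesis. Thus $\fsig(A) = \frac{1}{2}$ if and only if $A$ is FFRT with FFRT system $\{A, \omega_A\}$, and in that case $\ehk(A) = \frac{\type(A)+1}{2}$. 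This settles $(1) \Leftrightarrow (2)$ immediately and for free.

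Next I would prove $(2) \Longleftrightarrow (3)$. The direction $(3) \Rightarrow (2)$ is already contained in Proposition \ref{ToricFFRT}: if $A \cong k[x_1,\ldots,x_d]^{(2)}$ with $d$ odd, then by that proposition the FFRT system is $\{A, M\}$ with $M \not\cong A$, and (as recalled in its proof, via \cite[Proof of Proposition 3.10]{Sannai15} or \cite[Proposition~2.1]{HigNak2}) the nontrivial summand $M$ must be $\omega_A$ because $A$ is strongly F-regular and non-Gorenstein — the latter holding precisely because $d$ is odd. For $(2) \Rightarrow (3)$: assume the FFRT system of $A$ is $\{A, \omega_A\}$. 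Since $A$ is non-Gorenstein, $\omega_A \not\cong A$, so the FFRT system has the form $\{A, M\}$ with $M = \omega_A \not\cong A$, which is hypothesis $(1)$ of Proposition \ref{ToricFFRT}. That proposition then forces $A \cong k[x_1,\ldots,x_d]^{(2)}$ for some $d \geq 2$. Finally, the Veronese $k[x_1,\ldots,x_d]^{(2)}$ is Gorenstein exactly when $d$ is even (as noted in Example \ref{second-ex}), and since $A$ is assumed non-Gorenstein we conclude $d$ is odd. This closes the cycle of equivalences.

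I do not anticipate a serious obstacle here, since the two hard inputs — the FFRT-characterization of $\fsig = \frac{1}{2}$ (Theorem \ref{EqUPforNonGor}) and the combinatorial classification of toric rings with two-element FFRT system (Proposition \ref{ToricFFRT}) — are already in hand. The only point requiring mild care is the bookkeeping around completion: $A$ is replaced by its $\frm$-adic completion so that Krull--Schmidt holds, which is legitimate as noted after Theorem \ref{Fsig_toric}, and one should check that "FFRT system $= \{A, \omega_A\}$" is preserved under completion, which it is because Frobenius pushforwards and canonical modules commute with completion for F-finite rings. A secondary subtlety is making sure the identification $M = \omega_A$ in the $(2) \Leftarrow (3)$ direction genuinely uses non-Gorensteinness and not merely strong F-regularity; this is handled by the fact that $\omega_A$ always occurs as a summand of some $F^e_* A$ for strongly F-regular $A$, so if the system has only two members, $M$ has no choice but to be $\omega_A$.
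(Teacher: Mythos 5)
Your proposal is correct and follows essentially the same route as the paper: both derive $(1)\Leftrightarrow(2)$ from Theorem~\ref{EqUPforNonGor} and use Proposition~\ref{ToricFFRT} together with the observation that non-Gorensteinness of $k[x_1,\ldots,x_d]^{(2)}$ forces $d$ odd to connect condition $(3)$. The only cosmetic difference is how the cycle of implications is closed (the paper separately cites \cite[Theorem~4.2]{WataYos04} for $(3)\Rightarrow(1)$, which your argument renders unnecessary).
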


\begin{proof}
$(3) \Longrightarrow (1)$ follows from \cite[Theorem 4.2]{WataYos04}. 
$(1) \Longleftrightarrow (2)$ follows from Theorem~\ref{EqUPforNonGor}.
Then we show $(2) \Longleftrightarrow (3)$. 
By Proposition \ref{ToricFFRT}, we see that $A \cong k[x_1,\ldots,x_d]^{(2)} = k[x_1,\ldots,x_d]^{G}$, where $G \cong \langle \mathrm{diag}(-1,\ldots,-1)\rangle$. 
Since $A$ is not Gorenstein, $G$ is not a subgroup of $SL(d,k)$, thus $d$ is an odd number
(see \cite{Wata74}). 
\end{proof}

%%% 

\begin{exam}\label{Fsig_Segre}
If $A$ is a Gorenstein toric ring, then $\fsig(A)=\frac{1}{2}$ does not imply the conditions (2) and (3) in Theorem~\ref{ToricMain}. Namely, consider the Segre product $P_n\coloneqq k[x_1,y_1]\#\cdots\# k[x_n,y_n]$ of $n$ polynomial rings with two variables, 
which is a Gorenstein toric ring in dimension $n+1$. 
Then, by \cite[Proposition~6.1]{HigNak2}, one can compute $\fsig(P_n)=\frac{2}{n+1}$. 
For example, $\fsig(P_3)=\frac{1}{2}$ but the FFRT system of $P_3$ consists of $7$ conic divisorial ideals (see \cite[Example~2.6]{HigNak}). 
\end{exam}

This example shows that  it is difficult to bound F-signature of a Gorenstein ring $A$  using 
the number of modules in the FFRT system. 
For this reason, we give an observation regarding Gorenstein toric rings whose the FFRT system consists of three modules. 

%%% 
\begin{prop}
\label{ToricFFRT3}
Let $A$ be a toric ring as in Remark~\ref{toric_remark}.  We assume that $A$ is Gorenstein. 
Then, the following conditions are equivalent. 
\begin{enumerate}
\item The FFRT system of $A$ is $\{A, M_1, M_2\}$,
\item $A$ is isomorphic to one of the following rings: 
\begin{enumerate}
\item the invariant subring $k[x_1,\ldots,x_d]^G$ where 
$G=\langle\text{\rm diag}(\underbrace{\xi,\ldots,\xi}_{m},\underbrace{\xi^2,\ldots,\xi^2}_{m})\rangle$ with $d=2m$ and $\xi$ is a primitive cubic root of unity, 
in which case $\fsig(A)=\frac{1}{3}$,  
\item the Segre product $k[x_1,y_1]\# k[x_2,y_2]=k[x_1x_2,x_1y_2,y_1x_2,y_1y_2]$ of two polynomial rings, in which case $\fsig(A)=\frac{2}{3}$. 
\end{enumerate}
\end{enumerate}
\end{prop}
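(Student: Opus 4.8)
The plan is to adapt the proof of Proposition~\ref{ToricFFRT}, combining the combinatorics of conic divisorial ideals with the presentation $A\cong B^{G}$ from Remark~\ref{toric_remark}, where $B=k[x_1,\dots,x_n]$ and $G=\Hom(\Cl(A),k^{\times})$ acts by $g\cdot x_i=g([\frp_i])x_i$; recall that by the cited theorem of Bruns--Gubeladze the FFRT system of $A$ equals the set $\mathcal{C}(A)$ of conic divisorial ideals.

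For $(2)\Rightarrow(1)$ I would verify each family directly. In case~(a), $A$ is the invariant ring $k[x_1,\dots,x_{2m}]^{\bbZ/3}$ under a small (no pseudo-reflections) action, so by \cite[Proposition~3.2]{SmVand97} the indecomposable summands of $F_*^{e}A$ are the three modules of covariants attached to the three characters of $\bbZ/3$ --- all nonzero, since $B$ has monomials of every weight --- whence $\mathcal{C}(A)$ has exactly three elements and $\fsig(A)=1/|G|=1/3$ by \cite[Theorem~4.2]{WataYos04}. In case~(b), $A\cong k[x,y,z,w]/(xw-yz)$, and the round-up identity $\lceil u\rceil+\lceil v\rceil-\lceil u+v\rceil\in\{0,1\}$ shows that the class in $\Cl(A)\cong\bbZ$ of an arbitrary conic ideal lies in $\{-1,0,1\}$, so $\mathcal{C}(A)=\{A,\frp_1,\frp_2\}$ with $\frp_1,\frp_2$ the two rulings, while $\fsig(A)=2/3$ by Example~\ref{Fsig_Segre}.

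For $(1)\Rightarrow(2)$, assume $\mathcal{C}(A)=\{A,M_1,M_2\}$. Since every $\frp_i$ is conic by \cite[Remark~3.4]{BruGub03} and nonzero in $\Cl(A)$ by the convention of Remark~\ref{toric_remark}, we have $[\frp_i]\in\{[M_1],[M_2]\}$ for all $i$; hence $\Cl(A)=\langle[M_1],[M_2]\rangle$, and the Gorenstein hypothesis gives $\sum_i[\frp_i]=[\omega_A]=0$. I would then split according to whether $\Cl(A)$ is finite. If it is, the cone $\sigma$ is simplicial, so $n=d$, the linear forms $\lambda_i$ form a $\bbZ$-basis of $\sfN$, and consequently every class of $\Cl(A)$ is represented by a conic ideal; thus $|\Cl(A)|=|\mathcal{C}(A)|=3$, so $G\cong\bbZ/3$ and $A\cong k[x_1,\dots,x_d]^{\bbZ/3}$ with diagonal weights in $\{1,2\}$ whose sum is divisible by $3$, which (after normalizing the action) one identifies with the ring in~(2)(a). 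If $\Cl(A)$ is infinite, then $\rank\Cl(A)=n-d\ge 1$; using that $\Cl(A)$ is generated by the at most two classes realized by the $\frp_i$, I would first rule out $\rank\Cl(A)\ge 2$ (such a quotient already forces at least four conic classes) and then, fixing an isomorphism $\Cl(A)\cong\bbZ$, show each $[\frp_i]=\pm1$ with a single defining relation $\sum_{i\in P}v_i=\sum_{j\in Q}v_j$, where $|P|=|Q|=:p$ by Gorensteinness. The class of a conic ideal $D(\lceil\lambda_1(\mathbf{x})\rceil,\dots,\lceil\lambda_n(\mathbf{x})\rceil)$ then equals $\varepsilon-\delta$ with $\varepsilon,\delta\in\{0,\dots,p-1\}$, so $|\mathcal{C}(A)|\le 2p-1$; checking that for $p\ge 3$ at least four values are attained then forces $p=2$, $n=4$, $d=3$, and the cone with relation $v_1+v_2=v_3+v_4$ is, up to lattice isomorphism, the Segre cone, i.e.\ case~(2)(b). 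The F-signatures are read off as in the two families.

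The hard part will be the infinite-class-group case: one must bound $|\mathcal{C}(A)|$ purely in terms of the rays of $\sigma$ and the linear relations among them, and in particular prove that every ``larger'' admissible configuration (class group of rank $\ge 2$, or rank one with $p\ge 3$) already produces at least four conic divisorial ideals. This is a lattice-point/round-up estimate; the finite case, by contrast, is essentially bookkeeping once $\sigma$ is known to be simplicial, since there $|\mathcal{C}(A)|=|\Cl(A)|$.
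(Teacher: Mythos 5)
Your strategy matches the paper's at a high level --- use that each $\frp_i$ is conic, constrain the classes, split on whether $\Cl(A)$ is torsion --- but two essential pieces of the $(1)\Rightarrow(2)$ direction are missing.

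The first is the key lemma the paper invokes at the outset, \cite[Proposition~2.1]{HigNak2}: for a Gorenstein toric ring the set of conic classes is closed under dualizing, and since $A$ is self-dual the remaining two pair up, giving $M_1\cong M_2^*$. This is the observation that makes the rest routine: it shows $[\frp_i]\in\{\pm[M_1]\}$ for every $i$, hence $\Cl(A)$ is cyclic generated by $[\frp_1]$, and when $\Cl(A)\cong\bbZ$ necessarily $[\frp_i]=\pm1$. Your weaker statement $\Cl(A)=\langle[M_1],[M_2]\rangle$ does not yield any of this: a priori $\Cl(A)$ could have rank two, and even assuming $\Cl(A)\cong\bbZ$ the two generators $[M_1],[M_2]$ could be (say) $3$ and $-1$ rather than $\pm1$. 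Your claim that rank $\geq 2$ ``already forces at least four conic classes'' is asserted, not proved, and the step ``show each $[\frp_i]=\pm1$'' is unjustified without the duality.

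The second is the torsion-free case. Once one knows $[\frp_1]=\cdots=[\frp_m]=1=-[\frp_{m+1}]=\cdots=-[\frp_n]$, the ring is a Segre product $k[x_1,\dots,x_m]\# k[x_{m+1},\dots,x_n]$; the paper recognizes this as a Hibi ring and appeals to the Higashitani--Nakajima computation of conic classes of Hibi rings \cite[Theorem~2.4, Example~2.6]{HigNak} to conclude $m=2$. You propose a direct round-up/lattice-point count in its place, which you yourself flag as ``the hard part'' and leave as a sketch. The bound $|\mathcal{C}(A)|\le 2p-1$ is not sufficient: the substantive step is exhibiting at least four distinct realized conic classes when $p\geq 3$, and that is precisely the content outsourced to \cite{HigNak}. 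As written your proposal is a plan, not a proof.

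Two smaller remarks. Simpliciality makes $\{v_1,\dots,v_d\}$ a $\bbQ$-basis of $\sfN_\bbR$, not a $\bbZ$-basis (the latter would force $A$ regular); the statement you actually use, and which is correct, is that for a simplicial cone every divisor class is conic, so $|\Cl(A)|=|\mathcal{C}(A)|=3$. Also, in the torsion case both you and the paper pass from $G\cong\bbZ/3$ with weights in $\{1,2\}$ summing to $0\bmod 3$ to the balanced configuration $d=2m$; note that the Gorenstein relation $m_1+2m_2\equiv 0\pmod 3$ gives only $m_1\equiv m_2\pmod 3$, so some additional justification (or normalization) is needed here, a point worth checking carefully.
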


\begin{proof}
We first show $(1)\Longrightarrow(2)$. 
By \cite[Proposition~2.1]{HigNak2}, we have that $M_1\cong M_2^*$. 
Since $\frp_i$ is conic, each $\frp_i$ is isomorphic to either $M_1$ or $M_1^*$. 
Moreover, since $[\omega_A]=[\frp_1]+\cdots+[\frp_n]=0$, we see that $n$ is an even number and we may assume that 
\[
[\frp_1]=\cdots=[\frp_m]=-[\frp_{m+1}]=\cdots=-[\frp_n]
\]
where $n=2m$. 
Then, we see that $\Cl(A)$ is generated by $[\frp_1]$, and we have two cases depending on whether it is torsion. 
 
%By \cite[Theorem~2.1(b)]{BG1}, $A$ can be described as $A\cong B^G$ where $B\coloneqq k[x_1,\cdots,x_d]$ and $G\coloneqq\Hom(\Cl(A),k^\times)$. 
%In particular, the action of $G$ on $B$ is defined as $g\cdot x_i=g([\frp_i])x_i$ for each $g\in G$ and any $i$. 

\begin{itemize}
\item If $\Cl(A)\cong \bbZ/r\bbZ$, then, by an argument similar to the proof of Proposition~\ref{ToricFFRT}, $G\cong \bbZ/3\bbZ$. 
For a generator $g$ of $G$, we can set $g([\frp_1])=\xi$ where $\xi$ is a primitive cubic root of unity. 
In this case, the action of $G$ in $S$ can be described as 
\[
\begin{cases}
g\cdot x_i=\xi x_i&(i=1,\ldots,m),\\
g\cdot x_i=\xi^{-1}x_i=\xi^2 x_i&(i=m+1,\ldots,n), 
\end{cases}
\]
and we have the case (a). 
The F-signature of $A$ can be obtained from \cite[Theorem 4.2]{WataYos04}. 

\medskip

\item
If $\Cl(A)\cong \bbZ$,  then we see that $G\cong k^\times$ and if $g([\frp_1])=\zeta\in k^\times$ for a generator $g$ of $G$, then $g(-[\frp_1])=\zeta^{-1}$. 
Thus, the action of $G$ on $B$ can be described as 
\[
\begin{cases}
g\cdot x_i=\zeta x_i&(i=1,\ldots,m),\\
g\cdot x_i=\zeta^{-1}x_i&(i=m+1,\ldots,n). 
\end{cases}
\]
In this case, we have that 
\[
A\cong k[x_1,\ldots,x_m]\#k[x_{m+1},\ldots,x_n]=k[x_ix_j\mid i=1,\ldots,m,\, j=m+1,\ldots,n]. 
\]
This Segre product of two polynomial rings can be considered as a Hibi ring \cite{Hib}, and the conic classes in Hibi rings are characterized in \cite{HigNak}. 
By \cite[Theorem~2.4 and Example~2.6]{HigNak}, we see that the Segre products of two polynomial rings 
that satisfy the condition $(1)$ are only the one with $m=2$. 
Thus, we have that $\fsig(A) =\frac{2}{3}$ by Example~\ref{Fsig_Segre} (the case of $n=2$), 
or we easily see that $A\cong k[x,y,z,w]/(xw - yz )$, 
in which case $\fsig(A) = \rme(A) - \ehk(A) = 2 - \frac{4}{3}= \frac{2}{3}$. 
\end{itemize}

$(2)\Longrightarrow(1)$ is well known, see e.g. \cite[Proposition 3.2]{SmVand97} for the case (a) and \cite[the proof of Theorem~6.1]{TodYas} for (b). 
\end{proof}

\subsection{Maximal toric F-signatures}\label{maximaltoric Sec}

In this subsection, we completely classify the toric rings $A$ for which $s(A) > \frac{1}{2}$. To this purpose, it will be helpful to introduce the following notation.

\begin{defn}  For $w_1,\dots,w_m\in \sfN$, a finite set of vectors in the lattice $\sfN$, define the \emph{dual zonotope} as
\[ \PC{w_1,\dots,w_m} = \{\mathbf{x} \in \sfM_{\bbR} \;|\; \langle \mathbf{x} , w_i \rangle \in [0,1]\}\,. \]
\end{defn}

In this notation, Theorem~\ref{Fsig_toric} says that $s(k[\sigma^{\vee} \cap \sfM]) = \PC{v_1,\dots,v_n}$, where $v_1,\dots,v_n$ are the minimal generators of $\sigma$. 

Our techniques will be based on volumes of slices of the unit cube, as in Section~\ref{section_lowerHK}. First, we interpret certain volumes as Eulerian numbers.

\begin{lemma} The volume of the portion of the unit $d$-cube where the sum of the coordinates lies between $k$ and $k+1$ is $\displaystyle \frac{A(d,k)}{d!}$, where $A(d,k)$ denotes the Eulerian number with parameters $d$ and $k$.
\end{lemma}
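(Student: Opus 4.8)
The plan is to reduce the statement to the volume function $v_{s,d}$ introduced just before Theorem~\ref{AE}. By its definition, $v_{s,d}$ is the volume of the part of the unit $d$-cube on which the sum of the coordinates is at most $s$; hence the volume of the slice on which the coordinate sum lies between $k$ and $k+1$ is exactly $v_{k+1,d}-v_{k,d}$. Everything then comes down to the identity
\[
v_{k+1,d}-v_{k,d}=\frac{A(d,k)}{d!}.
\]

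To prove this, I would first rewrite the given closed form as $v_{s,d}=\frac{1}{d!}\sum_{n=0}^{\lfloor s\rfloor}(-1)^n\binom{d}{n}(s-n)^d$, so that $d!\,(v_{k+1,d}-v_{k,d})$ equals
\[
\sum_{n=0}^{k}(-1)^n\binom{d}{n}(k+1-n)^d-\sum_{n=0}^{k-1}(-1)^n\binom{d}{n}(k-n)^d .
\]
Reindexing the second sum by $n\mapsto n-1$ (the resulting $n=0$ term vanishes since $\binom{d}{-1}=0$) and combining via Pascal's rule $\binom{d}{n}+\binom{d}{n-1}=\binom{d+1}{n}$ collapses the expression to $\sum_{n=0}^{k}(-1)^n\binom{d+1}{n}(k+1-n)^d$, which is the classical Worpitzky-type closed form for the Eulerian number $A(d,k)$, i.e.\ the number of permutations of $d$ letters with $k$ descents. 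Citing that standard identity completes the argument.

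The only genuinely delicate point is bookkeeping: one must track the summation ranges carefully (the $n=k$ term appears in the first sum but not in the second, and the boundary term from $\binom{d}{n-1}$ must be seen to vanish), and one must fix the normalization of Eulerian numbers at the outset, since the literature uses several conventions (indexing $A(d,k)$ by descents, by ascents, or with a shift) and only one of them matches $\sum_{n}(-1)^n\binom{d+1}{n}(k+1-n)^d$ on the nose. As a self-contained alternative that bypasses this identity, one can instead triangulate $[0,1)^d$ into the $d!$ order simplices and invoke the measure-preserving map $(u_1,\dots,u_d)\mapsto(\{u_1\},\{u_1+u_2\},\dots,\{u_1+\dots+u_d\})$: under it the floor of $u_1+\dots+u_d$ equals the number of descents of the permutation recording the relative order of the image coordinates, so the slice splits into $A(d,k)$ congruent pieces of volume $1/d!$; this route trades the combinatorial identity for a verification that the map is measure-preserving.
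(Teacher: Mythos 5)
Your primary argument is correct but takes a genuinely different route from the paper. You differentiate the closed formula for $v_{s,d}$ that the paper records just before Theorem~\ref{AE}: writing $v_{s,d}=\frac{1}{d!}\sum_{n\ge 0}(-1)^n\binom{d}{n}(s-n)^d$, forming $d!\,(v_{k+1,d}-v_{k,d})$, shifting the index in the second sum, and applying Pascal's rule to land on $\sum_{n=0}^{k}(-1)^n\binom{d+1}{n}(k+1-n)^d$, which is the classical Worpitzky-type explicit formula for $A(d,k)$ (permutations with $k$ descents). The bookkeeping you flag is handled correctly: the $n=k+1$ term of $v_{k+1,d}$ and the $n=k$ term of $v_{k,d}$ both vanish because $(0)^d=0$, and the phantom $n=0$ term contributes $\binom{d}{-1}=0$. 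The paper instead gives Stanley's bijective argument: it cuts the open cube into the $d!$ order simplices $\Delta_\sigma$ and exhibits an explicit piecewise-affine, measure-preserving map $\phi$ (with inverse $\psi$) carrying $\Delta_\sigma$ into the slab $k\le\sum x_i\le k+1$ where $k=\operatorname{des}(\sigma)$, so the slab is tiled by exactly $A(d,k)$ simplices of volume $1/d!$. Your approach is shorter given the formula for $v_{s,d}$ already in hand, but it outsources the combinatorics to a cited identity; the paper's argument is self-contained, proves the Worpitzky identity geometrically as a byproduct, and makes the role of descents transparent. The geometric alternative you sketch at the end is, in fact, exactly the paper's proof.
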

\begin{proof} The following argument is due to Stanley \cite{Sta}.
The hyperplanes $x_i=x_j$ cut the interior of the unit cube into $d!$ simplices of equal volume. Each can be characterized as the set of points $\Delta_{\sigma}$ where $0<x_{\sigma(1)}<x_{\sigma(2)}<\dots<x_{\sigma(d)}<1$ for some $\sigma\in\mathcal{S}_d$, giving a natural bijection between the simplices and $\mathcal{S}_d$. Define a map
\[
\phi(x_1,\dots,x_d)_i= 
\begin{cases} 
x_{i+1}-x_i & \textrm{ if } x_i<x_{i+1}\textrm{ and }i\not=d \\
1+x_{i+1}-x_i & \textrm{ if } x_i>x_{i+1}\textrm{ and }i\not=d \\
1-x_n & \textrm{ if }i=d \,. \\
\end{cases}
\]
Note that $\phi$ maps into the unit cube, and that $\phi|_{\Delta_{\sigma}}$ is affine with determinant $\pm1$. Further, if $(x_1,\dots,x_d)\in{\Delta_{\sigma}}$, then \[k\leq \phi(x_1,\dots,x_d)_1+\dots+\phi(x_1,\dots,x_d)_d\leq k+1\,,\] where $k$ is the number of {descents} of $\sigma$. Additionally, the map
\[\psi(x_1,\dots,x_d)_i=\lceil{x_i+\dots +x_n}\rceil-({x_i+\dots +x_n})\]
provides an inverse for $\phi$ on its image.
\end{proof}

\begin{lemma}\label{euler} For the Eulerian numbers $A(d,k)$, $\frac{A(d,k)}{d!} > \frac{1}{2}$ if and only if $(d,k)=(1,0)$, $(3,1)$, or $(5,2)$.
\end{lemma}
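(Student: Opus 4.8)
The plan is to reduce the claim to an elementary statement about Eulerian numbers, namely the inequality $2A(d,k) > d!$. Recall that the Eulerian numbers satisfy $\sum_{k=0}^{d-1} A(d,k) = d!$ and the symmetry $A(d,k) = A(d,d-1-k)$, so $A(d,k)/d!$ is a probability distribution symmetric about $k = (d-1)/2$; its maximum is attained at the central value(s) of $k$. Thus $A(d,k)/d! > 1/2$ can happen for at most one value of $k$ for a given $d$ (and when $d$ is even, never, since then two equal central terms would each have to exceed $1/2$, forcing their sum to exceed $d!$). So I may restrict to odd $d = 2m+1$ and the central index $k = m$, and the inequality to check is $2A(2m+1, m) > (2m+1)!$.

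First I would dispose of the small cases $d = 1, 3, 5$ by direct computation: $A(1,0) = 1 > \tfrac12 \cdot 1$, $A(3,1) = 4 > \tfrac12 \cdot 6$, and $A(5,2) = 66 > \tfrac12 \cdot 120$, so these indeed satisfy the strict inequality. The substance of the lemma is then that for odd $d = 2m+1 \geq 7$ (i.e.\ $m \geq 3$) one has $A(2m+1,m)/(2m+1)! \leq \tfrac12$ — in fact strictly less. For this I would invoke the known asymptotic/monotonicity behavior: the central Eulerian fraction $A(d, \lfloor (d-1)/2 \rfloor)/d!$ equals the probability that a sum of $d$ independent uniform $[0,1]$ variables lands in an interval of length $1$ around its mean $d/2$, and by the local central limit theorem this behaves like $\sqrt{6/(\pi d)}$, which is already below $1/2$ once $d \geq 8$; combined with the explicit check at $d = 7$ this finishes it. Alternatively — and this is cleaner for a self-contained proof — I would show directly that the sequence $d \mapsto A(d, \lfloor(d-1)/2\rfloor)/d!$ is decreasing (at least for $d \geq 3$), using the recurrence $A(d,k) = (k+1)A(d-1,k) + (d-k)A(d-1,k-1)$ to compare the central term of row $d$ against that of row $d-1$; the decrease past $d = 5$ then pushes the value permanently below $\tfrac12$.

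The main obstacle is the last step: turning "the central binomial-like fraction decreases and goes to $0$" into a clean, rigorous, self-contained bound without simply citing a limit theorem. The honest route is probably to prove monotonicity of the central Eulerian fraction by an explicit inductive estimate on the recurrence — bounding $A(2m+1,m)$ above by a multiple of $(2m-1)!$ with a shrinking constant — which is a finite, if slightly fiddly, calculation. Once the fraction is shown to be $\le \tfrac12$ for $d = 7$ and nonincreasing thereafter, and $< \tfrac12$ strictly for even $d$ and for odd $d \geq 7$, together with the three explicit values above, the equivalence stated in the lemma follows immediately. I would also note that this lemma is exactly what is needed to rule out all but finitely many $(d,k)$ when bounding $s(k[\sigma^\vee \cap \sfM]) = \operatorname{vol} \PC{v_1,\dots,v_n}$ by slicing the cube, which is why the strict inequality (not just $\ge$) matters in the subsequent classification.
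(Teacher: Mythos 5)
Your overall architecture matches the paper's: use the symmetry $A(d,k)=A(d,d-1-k)$ to rule out even $d$, check the small odd $d$ explicitly, and handle the rest by induction via the Eulerian recurrence. However, the step you flag as the ``main obstacle'' is indeed where the gap lies, and one of your intermediate claims is false as stated. The map $d\mapsto A(d,\lfloor(d-1)/2\rfloor)/d!$ is \emph{not} decreasing for $d\geq 3$: for example $A(4,1)/4!=11/24<11/20=A(5,2)/5!$, and in general $A(2m+1,m)=(m+1)\bigl(A(2m,m-1)+A(2m,m)\bigr)=2(m+1)A(2m,m)$, so in passing from $d=2m$ to $d=2m+1$ the central fraction is multiplied by $(2m+2)/(2m+1)>1$. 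Monotonicity only holds along the odd-$d$ (or even-$d$) subsequence, which is why the paper applies the recurrence twice, jumping from row $2k-1$ to row $2k+1$ in one step.

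Second, the two-step recurrence expresses $A(2k+1,k+j)$ as a weighted sum of \emph{three consecutive} entries of row $2k-1$, so any closing induction must carry the bound over the whole row, not just the central entry; your plan to ``compare the central term of row $d$ against that of row $d-1$'' cannot close even after fixing the parity issue, because off-center entries of the lower row inevitably enter. This row-wide form is what the paper inducts on ($A(2k+1,k+j)/(2k+1)!<\tfrac{1}{2}$ for all $j$, base case $k=3$). You should, however, recheck the printed inductive estimate before relying on it: the displayed identity in the paper has an evident typesetting error (the ``$!$'' should be ``$^2$'', i.e.\ coefficients $(k\pm j+1)^2$), and the asserted bound $(2k-1)!\bigl(k^2+\tfrac{3}{2}k+\tfrac{1}{2}\bigr)$ does not follow from substituting the inductive hypothesis into each of the three terms --- at $k=4$ it is already smaller than $A(9,4)=156190$. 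The structural ideas (row-wide hypothesis, two-row jump, summing the three coefficient weights) are the right ones, but the arithmetic there needs to be redone with care, and your proposal leaves exactly this part undone.
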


\begin{proof}
By symmetry, it is clear that $\frac{A(d,k)}{d!} < \frac{1}{2}$ for an even integer $d$. Let $k\geq 3$; we will show that $\displaystyle \frac{A(2k+1,k+j)}{(2k+1)!} < \frac{1}{2}$ by induction. The values can explicitly checked for $k=3$. By twice applying the relation 
\[A(n,m)=(n-m)\ A(n-1,m-1)+(m+1)\ A(n-1,m)\]
one obtains the equality
\begin{align*}
A(2k+1,k+j)=&(k-j+1)!\ A(2k-1,k-j-2)+(k+j+1)!\ A(2k-1,k-j)\\
            &+2(k^2+k-j^2)\ A(2k-1,k-j-1)\,.
\end{align*}
By induction, this is less than $(2k-1)!\ (k^2+\frac{3}{2} k+ \frac{1}{2}),$
which, for $k\geq 3$, is less than $\frac{1}{2}(2k+1)!$ as required.
\end{proof}

\begin{thm}\label{maximaltoric} The toric rings with F-signature greater than $1/2$ are, up to isomorphism, as follows:
\begin{itemize}
\item For a polynomial ring $A$, we have $s(A)=1$.
\item For the coordinate ring $A$ of the Segre product $\bbP^1 \# \bbP^1$, we have $s(A)=\frac{2}{3}$.
\item For the coordinate ring $A$ of the Segre product $\bbP^2 \# \bbP^2$, we have $s(A)=\frac{11}{20}$.
\end{itemize}
\end{thm}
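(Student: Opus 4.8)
The plan is to translate the problem into a volume estimate on slices of the unit cube and then push a dimension count. By Theorem~\ref{Fsig_toric}, for $A=k[\sigma^{\vee}\cap\sfM]$ with $\sigma$ minimally generated by $v_1,\dots,v_n\in\sfN$, we have $s(A)=\mathrm{vol}\,\PC{v_1,\dots,v_n}$. The first reduction is to note that adding more generators only shrinks the region, so if $s(A)>\tfrac12$ then in particular $n$ cannot be too large relative to $d$; moreover $\sigma$ must be simplicial (if $n>d$, one can find $d+1$ among the $v_i$ spanning a cone whose dual zonotope slab already has volume $\le\tfrac12$ by Lemma~\ref{euler}, applied after a change of coordinates making $d$ of them the standard basis). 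So I would first argue $n\in\{d,d+1\}$, and that the polynomial ring case ($n=d$, $s(A)=1$) is clear.

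Next, for the simplicial case $n=d$: a change of $\sfM$-basis lets us assume $v_1,\dots,v_d$ is the standard basis of a sublattice of finite index, so $\PC{v_1,\dots,v_d}$ is a fundamental parallelepiped and $s(A)=1/[\sfN:\langle v_1,\dots,v_d\rangle]$; since $A$ is not a polynomial ring this index is $\ge 2$, giving exactly $\tfrac12$, which is excluded. Hence the only surviving case is $n=d+1$, i.e.\ $\sigma$ is a cone with exactly one more ray than its dimension. Here I would use that, after a unimodular change of basis, we may take $v_1,\dots,v_d$ to be $e_1,\dots,e_d$ and $v_{d+1}=-(a_1e_1+\dots+a_de_d)$ for positive integers $a_i$ with $\gcd(a_1,\dots,a_d)$ controlling torsion in $\Cl(A)$; then
\[
\PC{v_1,\dots,v_{d+1}}=\{\mathbf{x}\in[0,1]^d\mid -1\le a_1x_1+\cdots+a_dx_d\le 0 \text{ or } \ldots\},
\]
more precisely the cube intersected with the slab $\{0\le \sum a_ix_i\le 1\}$ translated appropriately; its volume is a sum of slice volumes $v_{s,d}$-type quantities. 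The key point is that this volume is maximized when all $a_i=1$, reducing to the slab $\{k\le x_1+\dots+x_d\le k+1\}$, whose volume is $A(d,k)/d!$ by the first Lemma. Then Lemma~\ref{euler} pins down $(d,k)\in\{(1,0),(3,1),(5,2)\}$, i.e.\ the Segre products $\bbP^1\#\bbP^1$ (in $d=3$) and $\bbP^2\#\bbP^2$ (in $d=5$), with the stated F-signatures $\tfrac23$ and $\tfrac{11}{20}$, and $(1,0)$ being the polynomial ring $k[x,y]$ already accounted for. I would also double-check directly that the Segre product $\bbP^{m}\#\bbP^{m}$ has defining cone of exactly this "one extra ray, all $a_i=1$" shape, so that the combinatorial region really is the cube slab.

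The main obstacle I expect is the monotonicity claim: showing that among all $(a_1,\dots,a_d)$ with the given constraints, the volume of the cube-slab $\PC{e_1,\dots,e_d,-\sum a_ie_i}$ is largest when all $a_i=1$, and that for any non-trivial choice the volume already drops below $\tfrac12$ except in the three listed cases. This is a genuine (but finite-flavored) optimization over the $a_i$; I would handle it by a scaling/convexity argument — replacing $x_i$ by $a_ix_i$ distorts volume by $\prod a_i$ while the slab becomes $\{0\le\sum y_i\le 1\}$ inside a box of side lengths $a_i$, and comparing to the unit-cube slab via the Eulerian-number Lemma gives the bound — together with the $n=d+1$ analysis to rule out adding still more rays. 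The rest (identifying the surviving rings with the named Segre products and reading off $s(A)$ from Theorem~\ref{Fsig_toric} or from $s=\rme-\ehk$ in the hypersurface case) is routine.
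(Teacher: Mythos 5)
Your high-level strategy is the same as the paper's — translate via Theorem~\ref{Fsig_toric} to a cube-slicing volume and use Eulerian numbers — but the execution has a real geometric error in the crucial $n=d+1$ case.

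Your parametrization $v_{d+1}=-(a_1e_1+\cdots+a_de_d)$ with all $a_i>0$ cannot describe a valid pointed, minimally generated cone: if all coordinates of $v_{d+1}$ in the dual basis to $v_1,\dots,v_d$ are negative, then $\sigma$ fails to be strongly convex (indeed with $a_i=1$ you get $v_1+\cdots+v_d+v_{d+1}=0$, so $\sigma=\sfN_{\bbR}$), and if they were all positive then $v_{d+1}$ would lie in the cone generated by $v_1,\dots,v_d$ and be redundant. The actual geometry in this case has \emph{mixed} signs. Related to this, your proposed ``reduce to $a_i=1$ by a monotonicity/scaling argument'' is both unneeded and, with that parametrization, internally inconsistent: with $a_i=1$ and all signs negative, the slab $\{0\le\lambda_{d+1}(\mathbf{x})\le 1\}$ meets $[0,1]^d$ only at the origin, not in the Eulerian slab you want. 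The step that actually pins down the structure — and that your plan omits — is to observe from the intersection formula \eqref{eq:maxdet} and the Jacobian formula that $s(A)>\tfrac12$ forces $|\det[v_{j_1},\dots,v_{j_d}]|=1$ for \emph{every} $d$-subset. In the $n=d$ case this immediately gives a polynomial ring (your ``index $\ge2$'' remark should read $s(A)\le 1/[\text{index}]\le\tfrac12$, not $=\tfrac12$, though the conclusion stands). In the $n=d+1$ case, applying the determinant constraint to the subsets $\{v_1,\dots,\widehat{v_i},\dots,v_{d+1}\}$ forces each coordinate of $v_{d+1}$ to be exactly $\pm1$; then the substitution $x_j\mapsto 1-x_j$ on the negative-sign coordinates turns the region into the Eulerian slab $\{k\le\sum x_i\le k+1\}$ with no optimization over $a_i$ required. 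Finally, you also need to handle $n\ge d+2$: this does not follow automatically from an ``adding rays shrinks the region'' heuristic, because one must check concretely (as the paper does, by exhibiting the allowed $\pm1$ matrices in $d=3,5$) that no such configuration keeps the volume above $\tfrac12$. Incorporating the determinant constraint would fix the core of your argument and make the $n=d+1$ and $n\ge d+2$ reductions clean.
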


\begin{proof} We use the notation of Subsection~\ref{toric_preliminaries}. We can write
\begin{equation}\label{eq:maxdet} \PC{v_1,\dots,v_n} = \bigcap_{\{j_1, \ldots, j_d\}\subset \{1,\dots,n\}} \PC{v_{j_1},\dots,v_{j_d}} \,.
\end{equation}
By the Jacobian formula,
\[ \operatorname{vol}(\PC{v_{j_1},\dots,v_{j_d}}) = \left|\frac{1}{\det{[v_{j_1},\dots,v_{j_d}]}}\right| \,.\]
Thus, if $s(R)>\frac{1}{2}$, $|\det{[v_{j_1},\dots,v_{j_d}]}|=1$ for any $\{j_1, \ldots, j_d\}\subset \{1,\dots,n\}$. Assume for now that this is the case. If $n=d$, then $\{v_1,\dots,v_n\}$ is a basis for $\sfM$, so $A$ is isomorphic to a polynomial ring.

Consider the case where $n=d+1$. The vectors $\{v_1,\dots,v_d\}$ form a basis for $\sfN$.  Let $x_1,\dots,x_d$ be coordinates of $\sfM$ forming a dual basis to $v_1,\dots,v_d$. We may thus compute the volume of $ \PC{v_1,\dots,v_n}$ in these coordinates. Since $|\det{[v_1,\dots,\hat{v_i},\dots,v_d,v_{d+1}]}|=1$, the $i^{th}$ coordinate of $v_{d+1}$ is $\pm 1$. That is, in these coordinates,
\[ [v_1,\dots,v_{d+1}]=\left[
\begin{array}{ccccc}
1 & 0 & \hdots & 0 & \pm 1 \\
0 & 1 &  \hdots      & 0 & \pm 1 \\
\vdots & \vdots & \ddots & \vdots & \vdots \\
0 & 0 & \hdots & 1 & \pm 1 \\
\end{array}
\right]\,.
\]

Note that if $d\leq 2$, then our generating set for $\sigma$ is not minimal, so we may assume that $d\geq 3$. Renumber the coordinates so that in the matrix above, $\langle x_i, v_{d+1}\rangle=+1$ for $i\leq k$ and $\langle x_i, v_{d+1}\rangle=-1$ for $i>k$. Then $\PC{v_1,\dots,v_{d+1}}$ is the subset of the unit $d$-cube where \[0\leq x_1+\cdots+x_k-x_{k+1}-\cdots-x_d\leq 1\,.\] Using $x_j\mapsto 1-x_j$ symmetry of the cube, we have 
\[ \operatorname{vol}(\PC{v_1,\dots,v_{d+1}}) = \operatorname{vol}(\{(x_1,\dots,x_d)\in[0,1]^d \;|\; k\leq \sum_{i=1}^d x_i \leq k+1 \,\}) \,.\]
By Lemma~\ref{euler}, we see that the volume $s(A)$ is greater than $\frac{1}{2}$ only if $d=3$ and $k=1$ or $d=5$ and $k=2$. These correspond to the coordinate rings of $\bbP^1 \# \bbP^1$ and $\bbP^2 \# \bbP^2$. %Since $\PC{v_1,\dots,v_n}  \subseteq  \PC{v_{j_1},\dots,v_{j_{d+1}}$ for every subset of size $d+1$ of the minimal generators, we see that $s(A)>1/2$ implies that $d=3$ or $d=5$, and that every subset of size $d+1$ of the minimal generators must be as above.

We now consider what happens if $n\geq d+2$.
If $\operatorname{vol}(\PC{v_1,\dots,v_n})>1/2$, with $d=3$, then 
\[ [v_1,\dots,v_4]=\left[
\begin{array}{cccc}
1 & 0 & 0 &  1 \\
0 & 1 & 0 &  1 \\
0 & 0 & 1 &  -1 \\
\end{array}
\right]\]
in the dual basis to $v_1, v_2, v_3$. If $n\geq 5$, the same arguments show that if $\operatorname{vol}(\PC{v_1,\dots,v_n})>\frac{1}{2}$, in a particular basis we have 
\[ [v_1,\dots,v_5]=\left[
\begin{array}{ccccc}
1 & 0 & 0 & 1 & 1\\
0 & 1 & 0 & 1 & -1\\
0 & 0 & 1 & -1 & 1\\
\end{array}
\right]
\]
where one computes $\operatorname{vol}(\PC{v_1,\dots,v_5})=1/3$. Thus, the F-signature cannot be greater than or equal to $1/2$ in this case.
Now consider when $d=5$. If $\operatorname{vol}(\PC{v_1,\dots,v_n})>\frac{1}{2}$, 
\[ [v_1,\dots,v_6]=\left[
\begin{array}{cccccc}
1 & 0 & 0 & 0 & 0 &  1 \\
0 & 1 & 0 & 0 & 0 &  1 \\
0 & 0 & 1 & 0 & 0 &  1 \\
0 & 0 & 0 & 1 & 0 & -1 \\
0 & 0 & 0 & 0 & 1 & -1 \\
\end{array}
\right] \]
in the dual basis to $v_1, v_2, v_3, v_4, v_5$. If $n\geq 7$, a case-by-case analysis similar to above shows that the volume $\operatorname{vol}(\PC{v_1,\dots,v_n})<\frac{1}{2}$.
\end{proof}

\begin{comment}

\begin{xmp} The affine toric variety with dual cone generators
\[ [v_1,\dots,v_{d+1}]=\left[
\begin{array}{ccccc}
1 & 0 & \hdots & 0 &  1 \\
0 & 1 &  \hdots  & 0 &  1 \\
\vdots & \vdots & \ddots & \vdots & \vdots \\
0 & 0 & \hdots & 1 &  -1 \\
\end{array}
\right]
\]
where the last column has $k$ positive entries and $d-k$ negative entries is isomorphic to the affine cone over $\PP^{k-1}\X \PP^{d-k}$. Singh showed that the F-signature of this ring is $\displaystyle\frac{A(d,k)}{d!}$. The proof above, combined with Stanley's result, gives a combinatorial interpretation of this calculation.
\end{xmp}

\end{comment}

If $A$ is not a toric ring, we have a family of $3$-dimensional Gorenstein rings whose F-signature are greater than $\frac{1}{2}$. 

\begin{exam}
\label{ex_cDV}
Let $c>2$ be an integer. 
Put $A:=k[[x,y,z,w]]/(x^2+y^2+z^2+w^c)$, where 
$k$ is an algebraically closed field of characteristic $p >c$. 
Then, $A$ is a normal hypersurface of $\dim A=3$, and 
\[
\dfrac{1}{2} < \fsig(A) < \dfrac{2}{3}. 
\] 
In fact, since $\rme(A)=2$ we have that $\fsig(A)=2-\ehk(A)$, thus this follows from \cite[Corollary~3.11]{WataYos05}. 
\end{exam}

%%%%%%%%%%%%%%%%%%%%%%%%%%%%%%%%%%%%%%%%%%%%%%%%%%%%%%%%%%%%%%%%%%%%%%%%%%%
\subsection*{Acknowledgements} 

This work started during the third author's stay at the Nihon University and he thanks them for hospitality. 
The second author would like to thank Kazuhiko Kurano, Akihiro Higashitani, and Kohsuke Shibata for valuable discussions.
The authors also want to thank the anonymous referee for the insightful comments that improved the quality and readability of our manuscript.

The first author is supported by by NSF CAREER Award DMS-2044833.
The second author is supported by World Premier International Research Center Initiative (WPI initiative), MEXT, Japan,  JSPS Grant-in-Aid for Young Scientists (B) 17K14159, and 
JSPS Grant-in-Aid for Early-Career Scientists 20K14279.
At various times during the preparation of this manuscript, the third author had support of a Starting Grant 2020-03970 by the Swedish Research Council and of a fellowship from ``la Caixa'' Foundation (ID 100010434) and from the European Union’s Horizon 2020 research and innovation programme under the Marie Skłodowska-Curie grant agreement No 847648. The fellowship code is ``LCF/BQ/PI21/11830033''.
The fifth author was partially supported by JSPS Grant-in-Aid for Scientific Research (C) Grant Numbers 19K03430. 

The content of subsection~\ref{maximaltoric Sec} originally appeared in the first author's Ph.D. thesis \cite{JackThesis}.

%%%%%%%%%%%%%%%%%%%%%%%%%%%%%%%%%%%%%%%%%%%%%%%%%%%%%%%%%%%
%%%%%%%%%%%%%%%%%%%%%%%%%%%%%%%%%%%%%%%%%%%%%%%%%%%%%%%%%%
%\bibliographystyle{amsplain}
%\bibliography{thebib}

\providecommand{\bysame}{\leavevmode\hbox to3em{\hrulefill}\thinspace}
\providecommand{\MR}{\relax\ifhmode\unskip\space\fi MR }
% \MRhref is called by the amsart/book/proc definition of \MR.
\providecommand{\MRhref}[2]{%
  \href{http://www.ams.org/mathscinet-getitem?mr=#1}{#2}
}
\providecommand{\href}[2]{#2}

\end{document}